\newtheorem{theorem}{Theorem}[section]
\newtheorem{definition}[theorem]{Definition}
\newtheorem{lemma}[theorem]{Lemma}
\newtheorem{proposition}[theorem]{Proposition}
\newcommand{\R}{\mathbb{R}}
\newcommand{\bel}{\begin{equation} \label}
\newcommand{\ee}{\end{equation}}
\newcommand{\M}{\mathrm{M}}
\newcommand{\g}{\mathrm{g}}
\newcommand{\dd}{\mathrm{d}}
\newcommand{\dv}{\,\mathrm{dv}^{n}}
\newcommand{\dvv}{\,\mathrm{dv}^{2n-1}}
\newcommand{\ds}{\,\mathrm{d\sigma}^{n-1}}
\newcommand{\dss}{\,\mathrm{d\sigma}^{2n-2}}
\newcommand{\I}{\mathcal{I}}
\newcommand{\h}{\mathcal{H}}
\newcommand{\p}{\partial}
\newcommand{\bb}{\beta^*}
\newcommand{\norm}[1]{\|#1\|}
\newcommand{\Abs}[1]{\left|#1\right|}
\newcommand{\abs}[1]{|#1|}
\newcommand{\set}[1]{\left\{#1\right\}}
\newcommand{\para}[1]{\left(#1\right)}
\newcommand{\cro}[1]{\left[#1\right]}
\newcommand{\seq}[1]{\langle #1\rangle}
\newcommand{\To}{\longrightarrow}
\newcommand{\bphi}{\boldsymbol{\phi}}
\newcommand{\bpsi}{\boldsymbol{\psi}}
\newcommand{\blambda}{\boldsymbol{\lambda}}
\newcommand{\w}{\varphi_{1,\tau}^*}
\newcommand{\ww}{\varphi_{2,\tau}^*}
\newcommand{\dive}{\textrm{div}}
\begin{document}

\title[A Borg-Levinson theorem for magnetic Schr\"odinger operators]{A Borg-Levinson theorem for magnetic Schr\"odinger operators on a Riemannian manifold}

\author[Bellassoued]{Mourad Bellassoued}
\address{Universit\'e de Tunis El Manar, Ecole Nationale d'Ing\'enieurs de Tunis, LAMSIN, BP 37, 1002 Tunis Le Belv\'ed\`re, Tunisia.}
\email{mourad.bellassoued@enit.utm.tn}

\author[Choulli]{Mourad Choulli}
\address{Institut \'Elie Cartan de Lorraine, UMR CNRS 7502, Universit\'e de Lorraine, F-57045 Metz cedex 1, France.}
\thanks{MC and YK are supported by the grant ANR-17-CE40-0029 of the French National Research Agency ANR (project MultiOnde).}
\email{mourad.choulli@univ-lorraine.fr}

\author[Dos Santos Ferreira]{David Dos Santos Ferreira}
\address{Institut \'Elie Cartan de Lorraine, UMR CNRS 7502, \'equipe SPHINX, INRIA, Universit\'e de Lorraine, F-54506 Vandoeuvre-lès-Nancy Cedex, France.}
\email{ddsf@math.cnrs.fr}
\thanks{DDSF is partly supported by ANR grant \textit{iproblems}.}

\author[Kian]{Yavar Kian}
\address{Aix Marseille Univ, Universit\'e de Toulon, CNRS, CPT, Marseille, France.}
\email{yavar.kian@univ-amu.fr}

\author[Stefanov]{Plamen Stefanov}
\address{Department of Mathematics, Purdue University, West Lafayette, IN 47907, USA.}
\email{Plamen-Stefanov@purdue.edu}
\thanks{PS  partly supported by  NSF  Grant DMS-1600327}

\date{}
%%%%%%%%%%%%%%%%%%%%%%
\subjclass[2010]{Primary 35R30, 35J10, Secondary: 35P99. } 
\keywords{Borg-Levinson type theorem, magnetic Schr\"odinger operator, simple Riemannian manifold, uniqueness, stability estimate.}
%%%%%%%%%%%%%%%
\begin{abstract}
This article is concerned with uniqueness and stability issues for the inverse spectral problem of recovering the magnetic field and the electric potential in a Riemannian manifold from some asymptotic knowledge of the boundary spectral data of the corresponding Schr\"odinger operator under Dirichlet boundary conditions. The spectral data consist of some asymptotic knowledge of a subset of eigenvalues and Neumann traces of the associated eigenfunctions of the magnetic Laplacian. We also address the same question for Schr\"odinger operators under Neumann boundary conditions, in which case we measure the Dirichlet traces of eigenfunctions. In our results we characterize the uniqueness of the magnetic field from a rate of growth of the eigenvalues, combined with suitable asymptotic properties of boundary observation of eigenfunctions, of the associated magnetic Schr\"odinger operator. To our best knowledge this is the first result proving uniqueness from such general asymptotic behavior of boundary spectral data.
\end{abstract}

\maketitle

%%%%%%%%%%%%%%%%%%%%%%%%%
\section{Introduction and main results}
%%%%%%%%%%%%%%%%%%%%

%\subsection{Introduction }

\subsection{Statement of the problem}

Let $\M=(\M,\,\g)$ be a smooth and compact
Riemannian manifold of dimension $n\geq2$ and with boundary $\partial \M$. We denote the Laplace-Beltrami operator associated to the Riemannian metric $\g$ by $\Delta$. In local coordinates,
the metric reads $\g=(\g_{jk})$, and the Laplace-Beltrami operator $\Delta$ is given by
$$
\Delta=\frac{1}{\sqrt{\abs{\g}}}\sum_{j,k=1}^n\frac{\p}{\p
x_j}\para{\sqrt{\abs{\g}}\,\g^{jk}\frac{\p}{\p x_k}}.
$$
Here $(\g^{jk})$ is the inverse of the metric $\g$ and $\abs{\g}=\det(\g_{jk})$.  
%In this paper we study an inverse spectral problem for the Schr\"odinger equation in the presence of a magnetic and electric potential. 

Given a couple of magnetic and electric potentials $B=(A,q)$, where $q\in L^\infty(\M)$ is real-valued, and $A=a_jdx^j$ is a covector field (1-form) with real-valued coefficients, $a_j\in W^{1,\infty}(\M)$, we consider the magnetic Laplacian
\begin{align}\label{1.1}
\h_{B}&=\frac{1}{\sqrt{\abs{\g}}}\sum_{j,k=1}^n\para{\frac{1}{i}\frac{\p}{\p
x_j}+a_j}\sqrt{\abs{\g}}\,\g^{jk}\para{\frac{1}{i}\frac{\p}{\p x_k}+a_k}+q\cr
&=-\Delta-2i\,A\cdot\nabla-i\,\delta A+|A|^2+q\cr
&:=-\Delta_A+q.
\end{align}
Here,  the dot product is in the metric with $A$ and $\nabla$ considered as covectors, $\delta$ is the coderivative (codifferential) operator, corresponding to the divergence with identifying vectors and covectors, which 
sends $1$-forms  to a functions by the formula 
$$
\delta A=\frac{1}{\sqrt{\abs{\g}}}\sum_{j,k=1}^n \frac{\p}{\p x^j}\para{\g^{jk}\sqrt{\abs{\g}} a_k},
$$
and we recall that, for $A=a_jdx^j$, we have $|A|^2=\g^{jk}a_ja_k$. 

For $B=(A,q)$ with $q\in L^\infty(\M)$ and $A=a_jdx^j$, $a_j\in W^{1,\infty}(\M)$,  define on $L^2(\M)$ the unbounded  self-adjoint operator $H_B$ as follows
\begin{equation}\label{1.2}
H_Bu=\mathcal{H}_B u
\end{equation}
and
\begin{equation}\label{1.3}
\mathscr{D}(H_B)=\set{u\in H_0^1(\M),\,\, -\Delta _Au+qu\in L^2(\M)}.
\end{equation}
Here $H^k(\M)$, denotes the standard  definition of the Sobolev spaces.

The operator $H_B$ is self adjoint and has compact resolvent,  therefore its spectrum $\sigma (H_B)$ consists in a sequence $\blambda _B=(\lambda _{B,k})$ of real eigenvalues, counted according to their multiplicities, so that
\begin{equation}\label{1.4}
-\infty < \lambda_{B,1}\le \lambda_{B,2} \le \ldots \le \lambda_{B,k}\rightarrow +\infty \quad \textrm{as}\; k\rightarrow \infty .
\end{equation}
In the sequel $\bphi_B= (\phi_{B,k})$ denotes an orthonormal basis of $L^2(\M)$ consisting in eigenfunctions with $\phi_{B,k}$ associated to $\lambda_{B,k}$, for each $k$.\\
In the rest of this text, we often use the following notation, where $k\ge 1$,
$$
\psi_{B,k}= \para{\partial_\nu+iA(\nu)}\phi_{B,k},\quad \textrm{on}\;  \p \M
$$
and $\bpsi _B=(\psi_{B,k})$, where $\nu$ the outward unit normal vector field on $\p M$ with respect to the metric $\g$.

We address the question of whether one can recover, in some suitable sense, the magnetic field $A$ and the potential $q$ from some asymptotic knowledge of the boundary spectral data 
$(\blambda_B, \bpsi_B)$ with $B=(A,q)$. As for most inverse problems, the main issues are uniqueness and stability.

\subsection{Obstruction to uniqueness}
We recall that there is an obstruction to the recovery of the electromagnetic potential $B$ from the boundary spectral data $(\blambda_B, \bpsi_B)$. Indeed, let $B=(A,q)$, and let $V\in \mathcal{C}^1(\M)$ be such that $V_{|\p\M}=0$ and set $\widecheck{B}=(A+dV,q)$. Then it is straightforward to check that
\begin{equation}\label{1.5}
e^{-iV}\mathcal{H}_Be^{iV}=\mathcal H_{\widecheck{B}},\quad (\blambda _B,\bpsi _B)=(\blambda_{\widecheck{B}}, \bpsi_{\widecheck{B}}).
\end{equation}
Therefore, the magnetic potential $A$  cannot be uniquely determined by
the boundary spectral data $(\blambda _B,\bpsi _B)$ and our inverse problem needs to be stated differently. %This fact may be interpreted in geometric viewpoint. 
 % $\M$ is a compact Riemannian manifold with boundary, 

According to \cite{Sh}, for every covector $A \in H^k(\M,T^*\M)$, there exist uniquely determined $A^s \in H^k(\M,T^*\M)$ and $V \in H^{k+1}(\M)$ such that
\begin{equation}\label{1.6}
A=A^s+dV,\quad \delta A^s=0,\quad V|_{\p\M}=0.
\end{equation}
 Following the well established terminology,  $A^s$ and $dV$ are called respectively the solenoidal and potential parts of the covector $A$. In view of the obstruction described above, the best one can expect is the simultaneous recovery of $A^s$ and $q$ from some knowledge of the boundary spectral data $(\blambda_B, \bpsi_B)$. From now on, we focus our attention on this problem.

\subsection{Known results}

There is a vast literature devoted to inverse spectral problems in one dimension. We refer for instance to  the pioneer works by Ambartsumian \cite{A}, Borg \cite{Bo}, Levinson \cite{L}, Gel'fand and Levitan  \cite{GL}.  The first multidimensional uniqueness result of this type is due to Nachman, Sylvester and Uhlmann \cite{NSU}  for the operator $-\Delta+q$ with $g$ Euclidean. They showed that $q$ is uniquely determined by the Dirichlet eigenvalues and the traces of the normal derivatives of the corresponding eigenfunctions.
 Later,  Isozaki \cite{I}
% improved the result of \cite{NSU} by using the Born approximation. He 
proved that if finitely many eigenvalues and eigenfunctions are omitted, we still have uniqueness. 
 In \cite{Su}, Sun studied, in this context, the recovery of magnetic Schr\"odinger operator from boundary measurements. The result of \cite{Su} requires an assumption of smallness of the magnetic field. This assumption was removed by Nakamura, Sun and Uhlmann in \cite{NSuU} as a consequence of their result on the Calder\'on's problem for such operators. Developing further Isozaki's approach, Choulli and  Stefanov \cite{CS}  gave a generalization of Isozaki's uniqueness result together with a H\"older stability estimate with respect to appropriate metrics for the spectral data. We mention that, following a remark of Isozaki which goes back to \cite{I}, the uniqueness and stability results of \cite{CS} were stated with only some  asymptotic closeness of the boundary spectral data. We mention also the work of \cite{ CK1, CK2}, dealing with recovery of general non-smooth coefficients from the full boundary spectral data, and the work \cite{KKS} who have considered a similar inverse spectral problem  for Schr\"odinger operators in an infinite cylindrical waveguide.
 
Another approach for getting uniqueness in the spectral inverse problem for the Laplace-Beltrami operator was introduced by Belishev \cite{Bel87} and Belishev and Kurylev \cite{Bel92}. This approach consists in  reducing the inverse spectral problem under consideration into an inverse hyperbolic problem for which one can apply the so called boundary control method. This method allows to consider the trace of the normal derivative of eigenfunctions only in a part of the boundary. We refer to \cite{Bel92, KKL, KK,Lassas2010,LO} and \cite{KOM} in the case of non-smooth coefficients. We mention that none of these papers considered this problem with observations corresponding to some asymptotic knowledge of the boundary spectral data. Actually, to our best knowledge, beside the present paper, there is no other results dealing with inverse spectral problem on non flat manifolds with data similar to the one considered by \cite{CS,KKS}.

One of the first stability estimate  for inverse spectral problems was established by Alessandrini and Sylvester \cite{AS}.   This result was reformulated by the second author in a more precise way in \cite{Ch}. A similar result  in the case of the Laplace-Beltrami operator was proved by the first and the third authors in \cite{BD} using the idea introduced in \cite{AS}. With the help of a result quantifying the uniqueness of continuation for a Cauchy problem with data on a part of the boundary for a wave equation, the first two authors and Yamamoto \cite{BCY} proved a double logarithmic stability estimate under the assumption that the potential is known near the boundary.  In \cite{CS}, the second and the last authors  provided one of the first H\"older type stability estimate for the  multi-dimensional Borg-Levinson theorem of determining
the potential from some asymptotic knowledge of the boundary spectral data of the associated Schr\"odinger operator. In \cite{KKS}, the fourth author, Kavian and Soccorsi  proved a similar result for an inverse spectral problem in an infinite cylindrical waveguide.

%%%%%%%%%%%%%%%%%%%%%%
\subsection{Preliminaries}
%%%%%%%%%%%%%%%%%%%%%%%%%%

We briefly recall some notations and known results in Riemannian geometry. We refer for instance to \cite{[Jost]} for more details. By Riemannian manifold with boundary, we mean a $C^\infty$-smooth manifold with boundary in the usual sense, endowed with a metric $\g$.

As before $\M$ denotes a compact Riemannian manifold of dimension $n\geq 2$. Fix a local coordinate system $x=\para{x^1,\ldots,x^n}$ and let $\para{\p_1,\dots,\p_n}$ be the corresponding tangent vector fields. For $x\in \M$, the inner product and the norm on the tangent space $T_x\M$ are given by
\begin{gather*}
\g(X,Y)=\seq{X,Y}=\sum_{j,k=1}^n\g_{jk}X^jY^k, \\
\abs{X}=\seq{X,X}^{1/2},\qquad  X=\sum_{i=1}^nX^i\p_i,\quad Y=\sum_{i=1}^n
Y^i\p_i.
\end{gather*}
The cotangent space $T_x^*\M$ is the dual of $T_x\M$. Its elements are
called covectors or one-forms. The disjoint union of the tangent spaces 
\[T\M=\underset{x\in\M}{\bigcup} T_x\M\]
 is called the tangent bundle of $\M$. Similarly, the cotangent bundle $T^*\M$ is the disjoint union of the spaces $T^*_x\M$, $x\in\M$. A $1$-form $A$ on the manifold $\M$ is a function that assigns to each point $x\in\M$ a covector $A(x)\in T^*_x\M$. 

An example of a $1$-form is the differential of a function $f \in\mathcal{C}^\infty(\M)$, which is defined by
$$
df_x(X)=\sum_{j=1}^n X^j\frac{\p f}{\p x_j},\quad X=\sum_{j=1}^nX^j\p_j.
$$
Hence $f$ defines the mapping $df : T\M\to\R$, which is called the differential of $f$ given by
$$
df(x,X)=df_x(X).
$$
In local coordinates,
$$
df=\sum_{j=1}^n\p_jfdx^j.
$$
where $(dx^1,\dots,dx^n)$ is the basis in the space $T^*_x\M$, dual to the basis $(\p_1,\dots,\p_n)$.

The Riemannian metric $\g$ induces a natural isomorphism $\imath : T_x\M\to T^*_x\M$ given
by $\iota(X) = \seq{X,\cdot}$. For $X\in T_x\M$ denote $X^\flat= \imath(X)$, and similarly for $A\in T^*_x\M$ we denote $A^\sharp=\imath^{-1}(A)$, $\imath$ and $\imath^{-1}$ are called musical isomorphisms. The \textit{sharp} operator is given by
\begin{equation}\label{1.7}
T^*_x\M\To T_x\M,\quad A\longmapsto A^\sharp,
\end{equation}
given in local coordinates by
\begin{equation}\label{1.8}
(a_jdx^j)^\sharp=a^j\p_j,\quad a^j=\sum_{k=1}^n\g^{jk}a_k.
\end{equation}

Define the inner product of $1$-forms in $T^*_x\M$ by 
\begin{equation}\label{1.9}
\seq{A,B}=\seq{A^\sharp,B^\sharp}=\sum_{j,k=1}^n \g^{jk}a_jb_k=\sum_{j,k=1}^n \g_{jk}a^jb^k.
\end{equation}
The metric tensor $\g$ induces the Riemannian volume 
\[ 
\dv=|\g|^{1/2}\dd x_1\wedge\cdots \wedge \dd x_n.
\]
 We denote by $L^2(\M)$ the completion
of $\mathcal{C}^\infty(\M)$ endowed with the usual inner product
$$
\para{f_1,f_2}=\int_\M f_1(x) \overline{f_2(x)} \dv,\qquad  f_1,f_2\in\mathcal{C}^\infty(\M).
$$
A section of a vector bundle $E$ over the Riemannian manifold $\M$ is a $C^\infty$ map $\mathfrak{s}:\M\to E$ such that for each $x\in\M$, $\mathfrak{s}(x)$ belongs to the fiber over $x$. We denote by $\mathcal{C}^\infty(\M,E)$ the space of smooth sections of the vector bundle $E$. According to this definition,  $\mathcal{C}^\infty(\M,T\M)$ denotes the space of vector fields on $\M$ and $\mathcal{C}^\infty(\M,T^*\M)$ denotes the space of $1$-forms on $\M$. Similarly, we may define the spaces $L^2(\M,T^*\M)$ (resp. $L^2(\M,T\M)$) of square integrable $1$-forms (resp. vectors) by using the inner product
\begin{equation}\label{1.10}
\para{A,B}=\int_\M\seq{A,\overline{B}}\dv,\quad A,B \in T^*\M.
\end{equation}

Define the Sobolev space $H^k(\M)$ as the completion of $\mathcal{C}^\infty(\M)$ with respect to the norm 
$$
\norm{f}^2_{H^k(\M)}=\norm{f}^2_{L^2(\M)}+\sum_{k=1}^n\|\nabla^k f\|^2_{L^2(\M,T^k\M)},
$$
where $\nabla^k$ is the covariant differential of $f$ in the metric $\g$.
%%%%%%%%%%%%%%%%%%%%%%%%%%%%
If $f$ is a $\mathcal{C}^\infty$ function on $\M$, then $\nabla f$ is the vector field defined by
$$
X(f)=\seq{\nabla f,X},
$$
for every vector field $X$ on $\M$. In the local coordinates system, the last identity can be rewritten in the form
\begin{equation}\label{1.11}
\nabla f=\sum_{i,j=1}^n\g^{ij}\frac{\p f}{\p x_i}\p_j=(df)^\sharp.
\end{equation}
The normal derivative of a function $u$ is given by the formula
\begin{equation}\label{1.12}
\p_\nu u:=\seq{\nabla u,\nu}=\sum_{j,k=1}^n\g^{jk}\nu_j\frac{\p u}{\p x_k},
\end{equation}
where $\nu$ is the unit outward vector field to $\p \M$.

Likewise, we say that a $1$-form $A=a_jdx^j$ belongs to $H^k(\M,T^*\M)$ if each  $a_j\in H^k(\M)$. The space $H^k(\M,T^*\M)$ is a Hilbert space when it is endowed with the norm
$$
\norm{A}_{H^k(\M,T^*\M)}=\left(\sum_{j=1}^n\norm{a_j}_{H^k(\M)}^2\right)^{\frac{1}{2}}.
$$
As usual, the vector space of smooth $2$-forms on $\M$ is denoted by $\Omega^2(\M)$. In local coordinates, a $2$-form $\omega$ is represented as
$$
\omega=\sum_{j,k=1}^n\omega_{jk}dx^j\wedge dx^k,
$$
where $\omega_{jk}$ are  real-valued functions on $\M$. Similarly as before, $\omega$ is in $H^s(\M,\Omega^2(\M))$, $s\in\R$, if $\omega_{jk} \in H^s(\M)$ for each $j,k$. Additionally, $H^s(\M,\Omega^2(\M))$ is a Hilbert space for the norm
$$
\norm{\omega}_{H^s(\M,\Omega^2(\M))}=\left(\sum_{j,k}\norm{\omega_{jk}}_{H^s(\M)}^2\right)^{\frac{1}{2}}.
$$
%%%%%%%%%%%%%%%%%%%%%%%%%%%%%
%%%%%%%%%%%%%%%%%%%%%%%%%%%

In the rest of this text,  the scalar product of $L^2(\p \M)$ is also denoted by $\langle \cdot\, ,\cdot \, \rangle$:
\begin{equation}\label{1.13}
\langle f_1,f_2\rangle = \int_{\p \M} f_1(x)\, \overline{f_2(x)}\,\ds
\end{equation}
where $\ds$ is the volume form of $\p\M$.
\subsection{Main results}
Prior to the statement of our main results, we introduce the notion of simple manifolds \cite{SU}. We say that the boundary $\p\M$ is strictly convex if  the second fundamental form  is positive-definite for any $x \in \p\M$.
\begin{definition}
A manifold $\M$ is simple if $\p \M$ is
strictly convex and, for any $x\in \M$, the exponential map
$\exp_x:\exp_x^{-1}(\M)\To \M$ is a diffeomorphism. %, which means that every arbitrary two points $x, y \in \M$ can be joined by a unique geodesic.
\end{definition}

Note that if $\M$ is simple, then it is diffeomorphic to a ball, and every two points can be connected by a unique minimizing geodesic depending smoothly on its endpoints. Also, one can extend it to a simple manifold $\M_{1}$ such that $\M_1^{\textrm{int}}\supset\M$.

We now introduce the admissible sets of magnetic potentials $A$ and electric potentials $q$. Set
$$
\mathscr{B}=W^{2,\infty}(\M,T^* \M)\oplus L^\infty (\M).
$$
We endow $\mathscr{B}$ with its natural norm
$$
\|B\|_{\mathscr{B}}=\|A\|_{W^{2,\infty}(\M,T^*\M)}+\|q\|_{L^\infty (\M)}.
$$
For $r>0$, set
\begin{equation}\label{1.14}
\mathscr{B}_r= \set{B=(A,q)\in\mathscr{B}, \,\,\norm{B}_{\mathscr{B}}\leq r}.
\end{equation}
Let $B_\ell\in\mathscr{B}_r$, $\ell=1,2$, we denote by $\para{\lambda_{\ell,k}, \phi_{\ell,k}}$, $k\geq 1$, the eigenvalues and normalized eigenfunctions of the operator $H_{B_\ell}$.

For $\ell= 1$ or $\ell= 2$, let 
\begin{equation}\label{1.15}
\psi_{\ell,k}=\para{\p_\nu+iA_\ell(\nu)}\phi_{\ell,k},\quad k\geq 1.
\end{equation}
 
At this point we remark that when $A_1=A_2$ it is clear that $H_{B_1}-H_{B_2}=q_1-q_2$ whence by the min-max principle, 
$$\sup_{k\geq1}|\lambda_{1,k}-\lambda_{2,k}|\leq \norm{q_1-q_2}_{L^\infty(M)}<\infty.$$
Assume now that $A_1\neq A_2$ and $\delta A_1=\delta A_2$. Then we have
$$H_{B_1}-H_{B_2}=-2i(A_1-A_2)\nabla+|A_1|^2-|A_2|^2+q_1-q_2.$$
Thus, $H_{B_1}-H_{B_2}\notin \mathcal B(L^2(M))$. Therefore, we can reasonably expect that
$$\sup_{k\geq1}|\lambda_{1,k}-\lambda_{2,k}|=+\infty.$$

Keeping in mind this property and the obstruction described in Section 1.2, it seems natural to expect the recovery of the solenoidal part of the magnetic potential from a rate of growth of the eigenvalues. Our first result give a positive answer to this issue together with the recovery of the electric potential.

%%%%%%%%%%%%%%%%%%%%%
\begin{theorem}\label{t1} 
Assume that $\M$ is simple. Let $B_\ell=(A_\ell,q_\ell)\in\mathscr{B}_r$, $\ell=1,2$, such that 
\bel{t3e} \p_x^\alpha A_1(x)= \p_x^\alpha A_2(x),\quad x\in\p M,\; |\alpha|\leq1.\ee 
Furthermore, assume that there exists $t\in [0,1/2)$ so that
\begin{equation}\label{1.16} 
\sup_{k\ge 1} k^{-t/n} \abs {\lambda_{1,k}-\lambda_{2,k}}+ \sum_{k\ge 1}k^{-2t/n}\|\psi_{1,k}-\psi_{2,k}\|_{L^2(\p \M)}^2<\infty . 
\end{equation}
Then $A^s_1=A^s_2$. Moreover, under the additional conditions
\begin{equation}\label{1.18} 
\lim_{k\to+\infty}\abs{\lambda_{1,k}-\lambda_{2,k}}=0,\quad  \textrm{and}\;\;  \sum_{k\ge 1}\norm{\psi_{1,k}-\psi_{2,k}}_{L^2(\p \M)}^2<\infty ,
\end{equation}
we have  $q_1=q_2$.
\end{theorem}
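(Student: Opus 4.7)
The plan is to connect the boundary spectral data to the Dirichlet-to-Neumann (DN) map via its spectral representation, and then to test the resulting identity against geometric optics (GO) solutions in order to reduce the problem to the injectivity of the geodesic X-ray transform on a simple manifold.

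First, for $\lambda \notin \sigma(H_{B_\ell})$ and $f\in H^{3/2}(\p\M)$, the unique solution $u_\ell = u_\ell(\cdot,\lambda)$ of $(\h_{B_\ell}-\lambda)u_\ell = 0$ with $u_\ell|_{\p\M}=f$ admits the expansion $u_\ell = \sum_k \frac{\seq{f,\psi_{\ell,k}}}{\lambda_{\ell,k}-\lambda}\phi_{\ell,k}$, so that the magnetic DN map $\Lambda_{B_\ell}(\lambda)f := (\p_\nu + iA_\ell(\nu))u_\ell|_{\p\M}$ has the spectral representation $\Lambda_{B_\ell}(\lambda)f = \sum_k \frac{\seq{f,\psi_{\ell,k}}}{\lambda_{\ell,k}-\lambda}\psi_{\ell,k}$. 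Taking $\lambda=-\tau^2$ with $\tau\gg 1$, I would subtract the two series and split the telescoping difference into a $\delta\psi_k := \psi_{1,k}-\psi_{2,k}$ contribution and a $\delta\lambda_k := \lambda_{1,k}-\lambda_{2,k}$ contribution. Hypothesis \eqref{1.16}, together with Weyl's law $\lambda_{\ell,k}\sim c k^{2/n}$ and the standard trace bound $\norm{\psi_{\ell,k}}_{L^2(\p\M)}\lesssim k^{1/n}$, should then yield a decay estimate for $\Lambda_{B_1}(-\tau^2)-\Lambda_{B_2}(-\tau^2)$ (measured in a norm compatible with the boundary traces of the GO solutions below) at a rate governed by the exponent $t$; the condition $t<1/2$ is exactly what will make this decay beat the $\tau$-growth of the GO boundary pairings.

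Second, using simplicity of $\M$, I would extend $\M$ to a slightly larger simple manifold $\M_1$ on which the eikonal equation $\abs{\nabla\varphi}^2=1$ admits smooth solutions in a neighborhood of any unit-speed geodesic $\gamma:[0,L]\To\M$ with endpoints on $\p\M$. For each such $\gamma$ I would construct GO solutions $u_\ell^\tau = e^{i\tau\varphi}a_\ell + r_\ell^\tau$ of $(\h_{B_\ell}+\tau^2)u_\ell^\tau=0$ whose amplitudes $a_\ell$ satisfy a transport equation along $\gamma$ with coefficients depending on $A_\ell$, and whose remainders obey $\norm{r_\ell^\tau}_{H^1(\M)}=o(1)$ as $\tau\to +\infty$. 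Plugging these into the Green-type identity
\begin{equation*}
\seq{(\Lambda_{B_1}-\Lambda_{B_2})(-\tau^2)\,u_1^\tau|_{\p\M},\, u_2^\tau|_{\p\M}}_{L^2(\p\M)} \;=\; \int_\M (\h_{B_1}-\h_{B_2})\, u_1^\tau\, \overline{u_2^\tau}\,\dv,
\end{equation*}
the bound from Step~1 forces the left-hand side to vanish as $\tau\to\infty$, while a stationary-phase expansion of the right-hand side isolates, to leading order, the geodesic X-ray transform of $\seq{A_1-A_2,\dot\gamma}$ along $\gamma$, weighted by a non-vanishing amplitude. The arbitrariness of $\gamma$, the vanishing Cauchy data of $A_1-A_2$ on $\p\M$ given by \eqref{t3e}, and the injectivity of the geodesic ray transform on solenoidal $1$-forms on simple manifolds then force $A_1^s = A_2^s$.

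For the recovery of $q$, from $A_1^s = A_2^s$ and \eqref{1.6} we get $A_1-A_2 = dV$ with $V|_{\p\M}=0$, so by \eqref{1.5} the potential $\widecheck{B}_2 := (A_2+dV,q_2)$ has exactly the same spectral data as $B_2$ and satisfies $A_2+dV=A_1$. Replacing $B_2$ by $\widecheck{B}_2$ we may assume $A_1=A_2$, so that $\h_{B_1}-\h_{B_2} = q_1-q_2\in L^\infty(\M)$. The stronger hypothesis \eqref{1.18} upgrades the Step~1 estimate to an $o(1)$ bound on $\Lambda_{B_1}(-\tau^2)-\Lambda_{B_2}(-\tau^2)$ in the relevant operator norm; re-running the GO argument with amplitudes tracking the scalar perturbation $q_1-q_2$ extracts the geodesic X-ray transform of $q_1-q_2$ as the leading asymptotic term, and its vanishing along every boundary-to-boundary geodesic together with the injectivity of the X-ray transform on functions on a simple manifold yields $q_1=q_2$. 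The main technical obstacle throughout will be the quantitative book-keeping in Step~1: balancing the weight $k^{-t/n}$ against the Weyl rate and against the $\tau$-dependence of the GO boundary traces, so as to asymptotically absorb the spectral-data defect, is the delicate point, and it is precisely where the threshold $t<1/2$ is both required and sharp.
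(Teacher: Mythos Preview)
Your overall strategy matches the paper's, but two concrete steps fail as written. First, the choice $\lambda=-\tau^2$ is incompatible with the oscillatory ansatz: computing $(\mathcal{H}_{B_\ell}+\tau^2)(e^{i\tau\varphi}a)=e^{i\tau\varphi}\big[\tau^2(|d\varphi|^2+1)a+O(\tau)\big]$ shows the eikonal term cannot vanish for real $\varphi$. The paper takes instead $\lambda=\lambda_\tau^2=(\tau+i)^2$, whose large positive real part yields a genuine oscillatory phase and whose imaginary part $2\tau$ keeps $\lambda_\tau^2$ off the spectrum and supplies the resolvent bound $\|R_{B_\ell}(\lambda_\tau^2)\|\le 1/(2\tau)$ that kills the error terms. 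Second, the termwise series $\Lambda_{B_\ell}(\lambda)f=\sum_k(\lambda_{\ell,k}-\lambda)^{-1}\langle f,\psi_{\ell,k}\rangle\,\psi_{\ell,k}$ does not converge in any useful topology (Weyl's law and the trace bound make each term $O(1)$), so the telescoping subtraction you propose is only formal. The paper handles this by Isozaki's device: it works with the convergent difference $\Lambda_{B_\ell}(\lambda_\tau^2)-\Lambda_{B_\ell}(\mu)$, pairs with the ansatz, and sends $\mu\to-\infty$, using a Caccioppoli-type lemma (which is where \eqref{t3e} is actually consumed) to show the extra boundary term $\int_{\p\M}\p_\nu w_{1,2}(\mu)\overline{\varphi_{2,\tau}^*}\,d\sigma$ vanishes in the limit; this produces the absolutely convergent series $\mathcal{L}^*(\tau)$ equal to $S_{B_1}(\tau)-S_{B_2}(\tau)$.

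Two smaller corrections. The threshold $t<1/2$ is not about growth balancing; it is needed so that $\mathscr{D}\big((H_{B_\ell}+\mu)^{t/2}\big)=H^t_0(\M)=H^t(\M)$, which is what yields the key a priori bound $\sum_k k^{2t/n}\big|\langle\varphi_{1,\tau}^*,\psi_{\ell,k}\rangle/(\lambda_{\ell,k}-\lambda_\tau^2)\big|^2\le C\tau^{2t}$ (Lemma~\ref{L.5.1}); only then does $|\mathcal{L}^*(\tau)|\le C\tau^t=o(\tau)$ follow. And the leading asymptotic does not give the \emph{linear} ray transform of $A=A_1-A_2$: because the transport amplitudes carry factors $\exp\big(i\!\int A_\ell\big)$, one obtains $\int_{S_y^+\M_1}\big(e^{i\mathcal{I}_1A(y,\theta)}-1\big)\eta\,d\omega_y(\theta)=0$, hence $\mathcal{I}_1A(y,\theta)\in 2\pi\mathbb{Z}$, and a separate continuity argument (letting $\theta$ tend to a tangential direction so that $\ell_+(y,\theta)\to 0$) is required to conclude $\mathcal{I}_1A=0$.
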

%%%%%%%

In the spirit of \cite{CS,KKS}, we consider also the stability issue for this problem stated as follows.
\begin{theorem}\label{t2} 
Assume that $\M$ is simple. Let $B_\ell=(A_\ell,q_\ell)\in\mathscr{B}_r$, $\ell=1,2$, such that
 $A_1$ and $A_2$ satisfies \eqref{t3e} and $q_1-q_2\in H^1_0(\M)$ satisfies 
 \[\norm{q_1-q_2}_{H^1_0(\M)}\leq r.
 \]
Furthermore, assume that there exists $t\in(0,1/2)$ so that
\begin{equation}\label{1.19} 
\sup_{k\ge 1}k^{-t/n}\abs{\lambda_{1,k}-\lambda_{2,k}}+ \sum_{k\ge 1}\|\psi_{1,k}-\psi_{2,k}\|_{L^2(\p \M)}^2<\infty .\end{equation}
Then $A^s_1=A^s_2$ and  
\begin{equation}\label{1.20} 
\norm{q_1-q_2}_{L^2(\M)}\leq C\left(\limsup_{k\to\infty}\abs{\lambda_{1,k}-\lambda_{2,k}}\right)^{\frac{1}{2}}<\infty,
\end{equation}
the constant $C$ only depends on $r$ and $\M$.
\end{theorem}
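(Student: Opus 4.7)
The plan is to adapt the Choulli--Stefanov template from \cite{CS} to the magnetic Riemannian setting with quantitative refinements that extract the explicit $1/2$ power in \eqref{1.20}. The starting point is a Parseval--type bilinear identity relating $\para{\mathcal{H}_{B_1}-\mathcal{H}_{B_2})u,\bar v}_{L^2(\M)}$, for suitable solutions $u$ of $(H_{B_1}-\lambda)u=0$ and $v$ of $(H_{B_2}-\lambda)v=0$, to a spectral series built from the differences $\lambda_{1,k}-\lambda_{2,k}$ and $\psi_{1,k}-\psi_{2,k}$. The growth assumption $\sup_k k^{-t/n}\abs{\lambda_{1,k}-\lambda_{2,k}}<\infty$, together with Weyl's law $\lambda_{\ell,k}\asymp k^{2/n}$ and the unweighted $\ell^2$ summability in \eqref{1.19}, is exactly what is needed so that this series can be split into a finite low-energy portion controlled by $\limsup|\lambda_{1,k}-\lambda_{2,k}|$ plus a tail made small by the summability hypothesis.

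Next I would feed into the bilinear identity WKB/Gaussian-beam solutions concentrated along maximal geodesics $\gamma$ joining two points of $\p\M$. The simplicity of $\M$ (extended to $\M_1^{\textrm{int}}\supset\M$ as noted after the definition) guarantees global smooth solvability of the eikonal equation via $\exp_x$ and smooth transport equations for the amplitudes. To leading order, pairing such concentrated solutions produces the geodesic ray transform of $A_1-A_2$ along $\gamma$ as a one-form, together with a remainder that I would estimate by Cauchy--Schwarz against the right-hand side of \eqref{1.19}. The boundary matching assumption \eqref{t3e} and the known $s$-injectivity of the geodesic ray transform for $1$-forms on simple manifolds (see e.g.\ \cite{SU}) then force $A_1^s=A_2^s$.

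Once $A_1^s=A_2^s$ is available, the gauge identity \eqref{1.5} applied with $V\in W^{3,\infty}(\M)$, $V_{|\p\M}=0$, such that $A_1-A_2=dV$, allows me to replace $B_1$ by $\widecheck{B}_1=(A_2,q_1)$ without changing $(\blambda_{B_1},\bpsi_{B_1})$, thereby reducing to the case $A_1=A_2$. A second round of the bilinear identity, now specialized to differing energies and tested against Gaussian beams of large parameter $\tau$, produces the geodesic ray transform of $q_1-q_2$ controlled by a constant multiple of $\limsup_k\abs{\lambda_{1,k}-\lambda_{2,k}}$ plus a term that decays in $\tau$. Using the a priori bound $\norm{q_1-q_2}_{H^1_0(\M)}\le r$ to absorb the $\tau$-dependence, interpolation
\[
\norm{q_1-q_2}_{L^2(\M)}\le C\,\norm{q_1-q_2}_{H^{-1}(\M)}^{1/2}\,\norm{q_1-q_2}_{H^1(\M)}^{1/2}
\]
combined with the fact that the ray transform of $q_1-q_2$ is $H^{-1}$-controlled on simple manifolds yields the estimate \eqref{1.20} with the correct exponent $1/2$.

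The main obstacle will be quantitatively matching the Gaussian-beam construction to the unweighted $\ell^2$ summability of $\psi_{1,k}-\psi_{2,k}$: since the concentrated solutions have $H^2$-norms that grow polynomially in the frequency parameter $\tau$, while $A\in W^{2,\infty}$ only, the remainders in the transport equations have to be absorbed by a careful splitting of the spectral sum at a cut-off $k\sim N(\tau)$. Optimizing the three competing quantities---the low-energy term bounded by $\limsup\abs{\lambda_{1,k}-\lambda_{2,k}}$, the high-energy tail $\sum_{k>N(\tau)}\norm{\psi_{1,k}-\psi_{2,k}}_{L^2(\p\M)}^2$, and the Gaussian-beam remainder of size $O(\tau^{-\alpha})$---so that the combined estimate lands exactly on the exponent $\tfrac{1}{2}$ in \eqref{1.20}, rather than a weaker Hölder exponent, is the delicate technical point where the restriction $t<1/2$ will enter decisively.
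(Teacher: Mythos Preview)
Your overall strategy is correct and matches the paper's: a representation formula linking the DtN-map difference to the spectral data, WKB solutions with phase $\psi=d_\g(\cdot,y)$ to extract the geodesic ray transforms, $s$-injectivity of $\I_1$ on simple manifolds for $A^s_1=A^s_2$, gauge reduction to $A_1=A_2$, then control of $\I_0(q)$ by $\limsup_k|\lambda_{1,k}-\lambda_{2,k}|$, and finally interpolation against the a~priori $H^1$ bound.

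Your last paragraph, however, misidentifies where the work lies. In the paper there is \emph{no} optimization in $\tau$. One first sends an auxiliary parameter $\mu\to-\infty$ (this is where \eqref{t3e} enters, via a Caccioppoli inequality, to show $u_1(\mu)-u_2(\mu)\to 0$ in $H^2(\M)$; see Lemma~\ref{L.3.2}), and then simply lets $\tau\to\infty$. The resolvent remainders in the representation formula \eqref{4.26} are $O(\tau^{-1})$ and vanish in the limit (Lemma~\ref{L.4.3}); the spectral series $\mathcal L^*(\tau)$ is handled in Lemma~\ref{L.5.2} by splitting at an \emph{arbitrary} integer $n_1$, sending $\tau\to\infty$ first (which kills the finitely many low-$k$ terms), then $n_1\to\infty$ (which kills the tail by \eqref{1.19}). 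No balancing of a cutoff $N(\tau)$ against $\tau$ occurs. Moreover the restriction $t<1/2$ plays no role in the stability step for $q$: once $A_1=A_2$, the min--max principle gives $\sup_k|\lambda_{1,k}-\lambda_{2,k}|\le\|q_1-q_2\|_{L^\infty}<\infty$, so Lemma~\ref{L.5.2} is applied at $t=0$. The condition $t<1/2$ is used only in the proof of $A^s_1=A^s_2$, through Lemma~\ref{L.5.1}, where the identification $\mathscr D((H_B+\mu)^{t/2})=H^t_0(\M)=H^t(\M)$ requires $t<1/2$.

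The exponent in \eqref{1.20} comes entirely from the post-limit interpolation, which the paper implements via the normal operator $N_0=\I_0^*\I_0$: testing the limiting identity with $\eta=\I_0 N_0(q)$ and integrating over $y\in\p\M_1$ gives $\|N_0(q)\|_{L^2(\M_1)}^2\le C\limsup_k|\lambda_{1,k}-\lambda_{2,k}|$, the $H^2$ norm of $\eta$ being absorbed by the a~priori bound $\|q\|_{H^1}\le r$ through the smoothing estimate \eqref{2.22}; one then interpolates $\|N_0(q)\|_{H^1}$ between $L^2$ and $H^2$ and invokes \eqref{2.21}. Your $H^{-1}/H^1$ interpolation on $q$ is equivalent to this, since $N_0$ is an elliptic pseudodifferential operator of order $-1$; but note that ``the ray transform is $H^{-1}$-controlled'' is not a black-box fact---it is precisely what the normal-operator argument with the specific test function $\eta=\I_0 N_0(q)$ establishes.
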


To our best knowledge Theorems \ref{t1} and \ref{t2} are the first results dealing with inverse spectral problems for  Schr\"odinger operators, with non-constant leading coefficients, from asymptotic knowledge  of boundary spectral data similar to the one considered by \cite{CS,KKS}. Note also that Theorem \ref{t2} seems to be the first stability result of recovering the electric potential  from  partial boundary spectral data in such general context (the only other similar results can be found in \cite{BCY,CS,KKS} where stable recovery of Schr\"odinger operators on a bounded domain, with an Euclidean metric and without magnetic potential, have been considered).

 We recall that multi-dimensional Borg-Levinson type theorems for magnetic Schr\"odinger operators have been already considered in \cite{KK,Ki1,Ser}. Among them, only \cite{Ki1} considered the uniqueness issue from boundary spectral data similar to \eqref{1.18}. The results in the present work can be seen as an improvement of that in \cite{Ki1} in four directions. First of all, we prove for the first time the extension of such results to a general simple Riemanian manifold  by proving the connection between our problem and the injectivity of the so called geodesic ray transform borrowed from \cite{AR,FSU,Sh,SU}. In addition, by using some results of  \cite{SU}, we establish  stability estimates for this problem where \cite{Ki1} treated only the uniqueness. In contrast to \cite{Ki1}, we do not require the knowledge of the magnetic potentials on the neighborhood of the boundary. This condition is relaxed, by considering only some knowledge of the magnetic potentials at the boundary given by \eqref{t3e}. Finally, we show, for what seems to be the first time,  that even a rate of growth of the difference of eigenvalues like \eqref{1.16}, \eqref{1.19} can determine the magnetic potential appearing in a magnetic Schr\"odinger operator.

The main ingredient in our analysis is a suitable representation formula that involves the magnetic potential $A$ and the electric potential $q$ in terms of the Dirichlet-to-Neumann map associated to the equations $\mathcal H_{B}u-\lambda u=0$, for a well chosen set of complex $\lambda$'s. In \cite{I,Ki1}, the authors considered such a representation for a bounded domain with flat  metric. Using a construction inspired by  \cite{Be,BD,FKSU,FKSaU, FKLS,SU2} we show how one can extend such  approach to more general  manifolds. Note that this construction differs from the one considered by \cite{FKSU,FKSaU, FKLS} for recovering the magnetic Schr\"odinger operators from boundary measurements. Actually, our results hold for a general simple manifold even in the case $n\geq3$, whereas the determination of Schr\"odinger operators from boundary measurements in the same context is still an open problem (see \cite{FKSaU,FKLS}).

In this paper we treat also the problem of determining the Neumann realization of magnetic Schr\"odinger operator. For simplicity and in order to avoid any confusion between the results for the different operators, we give the statement of the result for  the Neumann realization of magnetic Schr\"odinger operator in Theorem \ref{t3} of Section 6. the result of Theorem \ref{t3} is stated with an optimal growth   of the difference of eigenvalues (see the discussion just after Theorem \ref{t3}).

We believe that following the idea of \cite{BKS,Ki1,KU,Sa}, one can relax the regularity condition imposed to the magnetic potentials as well as condition \eqref{t3e}. This approach requires the construction of anzats depending on an approximation of the magnetic potential instead of the magnetic potential itself. In order to avoid the inadequate expense of the size of the paper, we do not consider this issue.

\subsection{Outline}
The outline of the paper is as follows. We review  in Section 2  the geodesic ray transform for $1$-one forms and functions on a manifold. Section 3 is devoted to an asymptotic spectral analysis. We construct  in Section 4 geometrical optics solutions for magnetic Schr\"odinger equations. We particularly focus our attention on the solvability of the eikonal and the transport equations which are essential in the construction of geometric optic solutions. Additionally,  we provide a representation formula. The proof of Theorems \ref{t1} and \ref{t2} are given in Section 5. The Neumann case is briefly discussed in Section 6.  Finally, we prove some uniform estimates related to the Weyl's formula for the magnetic Schr\"odinger operator in  appendix A.  
%%%%%%%%%%%%%%%%%%%%%%%%%%%%%%%
%%%%%%%%%%%%%%%%%%%%%%%%%%%%%%%%%%%%%%%%%%%%%%%%%%%%
\section{A short review on the geodesic ray transform on a simple manifold}
\setcounter{equation}{0}
%%%%%%%%%%%%%%%%%%%%%%%%%%%%%%%%%%%%%%%%%%%%%%%
%%%%%%%%%%%%%%%%%%%%%%%%%%%%%%%%%%%%%%%%%%%%%%%%%%%
We collect in this section some known results on the geodesic ray transform for  functions  and $1$-forms on a smooth simple Riemannian manifold $(\M,\g)$. These results will be used later in this text.

Denote by $\dive X$ the divergence of a vector field $X\in H^1(\M,T\M)$ on $\M$, i.e. in local coordinates (\cite[page 42]{KKL}),
\begin{equation}\label{2.1}
\dive X=\frac{1}{\sqrt{\abs{\g}}}\sum_{i=1}^n\p_i\para{\sqrt{\abs{\g}}\,X^i},\quad X=\sum_{i=1}^nX^i\p_i.
\end{equation}
Using the inner product of a $1$-form, we can define the coderivative operator $\delta$ as the adjoint of the exterior derivative via the relation
\begin{equation}\label{2.2}
\para{\delta A,v}=\para{A,dv},\quad A\in\mathcal{C}^\infty(M,T^*M),\,v\in \mathcal{C}^\infty(\M).
\end{equation}
Then $\delta A$ is related to the divergence of vector fields by $\delta A=\dive(A^\sharp)$, where the divergence is given by \eqref{2.1}. 
If $X\in H^1(\M,T\M)$ the divergence formula reads
\begin{equation}\label{2.3}
\int_\M\dive X \dv=\int_{\p \M}\seq{X,\nu} \ds.
\end{equation}
For $f\in H^1(\M)$, we have the following Green formula 
\begin{equation}\label{2.4}
\int_\M\dive X\,f\dv=-\int_\M\seq{X,\nabla f} \dv+\int_{\p \M}\seq{X,\nu} f\ds.
\end{equation}
Therefore, for $u,w\in H^2(\M)$, the following identity holds
\begin{align}
\int_\M\Delta_Au \overline{w} \dv &=-\int_\M\seq{\nabla_A u,\overline{\nabla_A w} } \dv+\int_{\p \M}(\p_\nu u+iA(\nu)u) \overline{w} \ds \label{2.5}
\\
&= \int_\M u\overline{\Delta_Aw} \dv \nonumber
\\
&\hskip 1cm +\int_{\p \M}\para{(\p_\nu u+iA(\nu)u) \overline{w}-u(\overline{\p_\nu w+iA(\nu)w})} \ds,\nonumber
\end{align}
where $\nabla_Au=\nabla u+iuA^\sharp$.
For $x\in \M$ and $\theta\in T_x\M$, denote by $\gamma_{x,\theta}$ the unique geodesic starting from $x$ and directed by $\theta$.  

Recall that the sphere bundle and co-sphere bundle of $\M$ are respectively given by
\begin{align*}
S\M=\set{(x,\theta)\in T\M;\,\abs{\theta}=1}, \quad
S^*\M=\set{(x,p)\in T^*\M;\,\abs{p}=1},
\end{align*}

The exponential map $\exp_x:T_x\M\To \M$ is defined as follows
\begin{equation}\label{2.6}
\exp_x(v)=\gamma_{x,\theta}(\abs{v}),\quad \theta=\frac{v\,\,}{\abs{v}}.
\end{equation}
%The Riemannian manifold $\M$  is said convex non-trapping
%manifold, if it satisfies following two conditions:
%\begin{enumerate}
%    \item[(i)] the boundary $\p \M$ is strictly convex, i.e., the second fundamental form of the boundary is positive definite at every
%    boundary point,
%    \item[(ii)] for each $(x,\theta) \in S\M$, the maximal
%    geodesic $\gamma_{x,\theta}(t)$ satisfying the initial conditions $\gamma_{x,\theta}(0) = x$ and $\dot{\gamma}_{x,\theta}(0) = \theta$ is defined on
%    a finite segment $[\ell_{-}(x,\theta), \ell_{+}(x,\theta)]$. 
    %We recall that a geodesic $\gamma: [a, b] \To M$ is maximal if it cannot be extended to a segment $[a-\varepsilon_1, b+\varepsilon_2]$, where $\varepsilon_i \geq 0$ and $\varepsilon_1 + \varepsilon_2 > 0$.
%\end{enumerate}
%Condition (ii) above means that all the geodesics have finite length.
%\medskip

%An example of a convex non-trapping manifold is a simple manifold. 

We assume in the rest of this section that $M$ is simple and we point out  that  any arbitrary pair of
points in $M$ can be joined by an unique geodesic of finite length.

Given $(x,\theta)\in S\M$ and denote by $\gamma_{x,\theta}$ the unique geodesic $\gamma_{x,\theta}$ satisfying the initial conditions $\gamma_{x,\theta}(0) = x$ and $\dot{\gamma}_{x,\theta}(0) = \theta$, which is defined on the maximal interval $[\ell_-(x,\theta),\ell_+(x,\theta)]$, with $\gamma_{x,\theta}(\ell_\pm(x,\theta))\in\p\M$. Define the geodesic flow $\varphi_t$ by
\begin{equation}\label{2.7}
\varphi_t:S\M\to S\M,\quad \varphi_t(x,\theta)=(\gamma_{x,\theta}(t),\dot{\gamma}_{x,\theta}(t)),\quad t\in [\ell_-(x,\theta),\ell_+(x,\theta)],
\end{equation}
and observe that $\varphi_t\circ\varphi_s=\varphi_{t+s}$.

Introduce now the submanifolds of inner and outer vectors of $S\M$
\begin{equation}\label{2.8}
\p_{\pm}S\M =\set{(x,\theta)\in S\M,\, x \in \p \M,\, \pm\seq{\theta,\nu(x)}< 0},
\end{equation}
where $\nu$ is the unit outer normal vector field on $\p\M$. 

Note that $\p_+ S\M$ and $\p_-S\M$ are compact manifolds with the same boundary $S(\p \M)$ and \[ \p S\M = \p_+ S\M \cup S \p \M \cup \p_- S\M. \]

%Denote by  $\mathcal{C}^\infty(\p_+ S\M)$ be the space of $C^\infty$-functions on the manifold $\p_+S\M$. 

It is straightforward to check that $\ell_\pm:S\M\to\R$ satisfy
$$
\ell_-(x,\theta)\leq 0,\quad \ell_+(x,\theta)\geq 0,
$$
$$
\ell_+(x,\theta)=-\ell_-(x,-\theta),
$$
$$
\ell_-(x,\theta)=0,\quad (x,\theta)\in\p_+S\M,
$$
$$
\ell_-(\varphi_t(x,\theta))=\ell_-(x,\theta)-t,\quad \ell_+(\varphi_t(x,\theta))=\ell_+(x,\theta)+t.
$$
%For $(x,\theta)\in\p_+ S\M$,  we denote by $\gamma_{x,\theta} : [0,\ell_+(x,\theta)] \to \M$ the maximal
%geodesic satisfying the initial conditions $\gamma_{x,\theta}(0) = x$ and $\dot{\gamma}_{x,\theta}(0) = \theta$. 

To each $1$-form $A\in\mathcal{C}^\infty(\M,T^*\M)$, with $A=a_jdx^j$, associate the smooth symbol $\sigma_A\in\mathcal{C}^\infty(S\M)$ given by
\begin{equation}\label{2.9}
\sigma_A(x,\theta)=\sum_{j=1}^na_j(x)\theta^j=\seq{A^\sharp(x),\theta},\quad (x,\theta)\in S\M.
\end{equation}
%%%%%%%%%%%%%%%%%%%%%%%%%%%%

Recall that the Riemannian scalar product on $T_x\M$ induces the volume form on $S_x\M$ given by
$$
\dd \omega_x(\theta)=\sqrt{\abs{\g}} \, \sum_{k=1}^n(-1)^k\theta^k \dd \theta^1\wedge\cdots\wedge \widehat{\dd \theta^k}\wedge\cdots\wedge \dd \theta^n.
$$
As usual, the notation $\, \widehat{\cdot} \,$ means that the corresponding factor has been dropped.

We also consider the volume form $\dvv$ on the manifold $S\M$ defined as follows
$$
\dvv (x,\theta)=\dd\omega_x(\theta)\wedge \dv,
$$
where $\dv$ is the Riemannnian volume form on $\M$. 

By Liouville's theorem, the form $\dvv$ is preserved by the geodesic flow. The
corresponding volume form on the boundary $\p S\M =\set{(x,\theta)\in S\M,\, x\in\p \M}$ is given
by
$$
\dss=\dd\omega_x(\theta) \wedge \ds,
$$
where $\ds$ is the volume form of $\p \M$.

%%%%%%%%%%%%%%%%%%

Santal\'o's formula will be useful in the sequel:
\begin{equation}\label{2.10}
\int_{S\M} F(x,\theta) \dvv (x,\theta)=\int_{\p_+S\M}\para{\int_0^{\ell_+(x,\theta)} F\para{\varphi_t(x,\theta)}dt}\mu(x,\theta)\dss ,
\end{equation}
for any $F\in\mathcal{C}(S\M)$.

Set $\mu(x,\theta)=\abs{\seq{\theta,\nu(x)}}$. For the sake of simplicity $L^2\left(\p_+S\M ,\mu(x,\theta)\dss \right)$ is denoted by $L^2_\mu(\p_+S\M)$. 

Note that $L^2_\mu(\p_+S\M)$ is a Hilbert space when it is endowed with the scalar product
\begin{equation}\label{2.11}
\para{u,v}_\mu=\int_{\p_+S\M}u(x,\theta) \overline{v}(x,\theta) \mu(x,\theta)\dss.
\end{equation}

Until the end of this section, we assume that $\M$ is simple.

%%%%%%%%%%%%%%%%%%%%%%%%%%%
\subsection{Geodesic ray transform of $1$-forms}
%%%%%%%%%%%%%%%%%%%%%%%%

The ray transform of $1$-forms on $\M$ is defined as the linear operator
$$
\I_1:\mathcal{C}^\infty(\M, T^*\M)\To \mathcal{C}^\infty(\p_+S\M)
$$
acting as follows
$$
\I_1 (A)(x,\theta)=\int_{\gamma_{x,\theta}}A=\sum_{j=1}^n\int_0^{\ell_+(x,\theta)} a_j(\gamma_{x,\theta}(t))\dot{\gamma}^j_{x,\theta}(t)dt=\int_0^{\ell_+(x,\theta)}\sigma_A(\varphi_t(x,\theta))dt.
$$
%where $\gamma_{x,\theta}: [0,\ell_+(x,\theta)]\to\M$ is a maximal geodesic satisfying the initial conditions $\gamma_{x,\theta}(0)=x$ and $\dot{\gamma}_{x,\theta}(0)=\theta$.
It is easy to check that $\I_1(d\varphi)=0$ for any $\varphi\in\mathcal{C}^\infty(\M )$ satisfying $\varphi_{|\p\M}=0$. On the other hand, it is known that $\I_1$ is injective on the space of solenoidal $1$-forms satisfying $\delta A=0$. Therefore, if $A\in H^1(\M,T^*\M)$ is so that $\I_1(A)=0$, then $A^s=0$. Whence, there exists $\varphi \in H^1_0(M) \cap H^2(M)$ such that  $A=d\varphi$.
As a consequence of this observation, we have
\begin{equation}\label{2.12}
\abs{\I_1(A)(x,\theta)}=\abs{\I_1(A^s)(x,\theta)}\leq C\norm{A^s}_{\mathcal{C}^0}, \quad A\in \mathcal{C}^0(\M,T^*\M).
\end{equation}
%%%%%%%%%%%%%%%%
With reference to \cite{Sh}, we recall  that $\I_1^*:L^2_\mu(\p_+S\M)\To L^2(\M,T^*\M)$ is given by
\begin{equation}\label{2.14}
\para{\I_1^*\Psi(x)}_j=\int_{S_x\M}\theta^j\widecheck{\Psi}(x,\theta)\, \dd\omega_x(\theta).
\end{equation}

Here $\widecheck{\Psi}$ is the extension of  $\Psi$ from $\p_+S\M$ to $S\M$, which is constant on every orbit of the geodesic flow. That is
$$
\widecheck{\Psi}(x,\theta)=\Psi\big(\gamma_{x,\theta}(\ell_-(x,\theta)),\dot{\gamma}_{x,\theta}(\ell_{-}(x,\theta))\big)=\Psi(\Phi_{\ell_-(x,\theta)}(x,\theta)),\quad (x,\theta)\in S\M.
$$

One can check \cite{Sh} that $\I_1$ has  a bounded extension, still denoted by $\I_1$, 
$$
\I_1:H^k(\M,T^*\M)\To H^k (\p_+S\M).
$$
We complete this subsection by results borrowed from \cite{SU}. We extend $(M,\g)$ to a smooth Riemannian manifold  $(M_1,\g)$ such that $\M_1^{\textrm{int}}\supset \M$
and we consider the normal operator $N_1=\I_1^*\I_1$.  
Then there exist
$C_1>0, C_2>0$ such that
\begin{equation}\label{2.15}
C_1\norm{A^s}_{L^2(\M)}\leq\norm{N_1(A)}_{H^1(\M_1)}\leq C_2\norm{A^s}_{L^2(\M)},
\end{equation}
for any $A\in L^2(\M,T^*\M)$. If $\mathcal{O}$ is an open set of $\M_{1}$, $N_1$ is an
elliptic  pseudo-differential operator of order $-1$ on $\mathcal{O}$  having as  principal symbol $\varrho(x,\xi)=(\varrho_{jk}(x,\xi))_{1\leq j,k\leq n}$, where
$$
\varrho_{j,k}(x,\xi)=\frac{c_n}{\abs{\xi}}\para{\g_{jk}-\frac{\xi_j\xi_k}{\abs{\xi}^2}}.
$$
Therefore, for each integer $k\geq 0$, there exists a constant $C_k>0$ such that, for any $A\in H^k(\M,T^*\M)$ compactly supported in $\mathcal{O}$, we have
\begin{equation}\label{2.16}
\norm{N_1(A)}_{H^{k+1}(\M_{1})}\leq C_k\norm{A^s}_{H^k(\mathcal{O})}.
\end{equation}
%\begin{remark}
%Consider the geodesic ray transform on $1$-form on $\M=\s^2$ and let $\kappa:\s^2\to \s^2$ be the antipodal map. A $1$-form $A$ is said to be odd if $\kappa^*A=-A$ and even if $\kappa^*A=A$. We can show that any odd $1$-form $A$, we have $\I_1(A)=0$, of course here the Riemannian manifold $\M=\s^2$ is not simple.
%\end{remark}
%%%%%%%%%%%%%%%%%%%%%%%%%%%%%%%%%%%%%%%%
\subsection{Geodesic ray transform of functions}
%%%%%%%%%%%%%%%%%%%%%%%%%%%%%%%%%%%%%%%%%%
Following \cite[Lemma 4.1.1]{Sh}, the ray transform of functions is the linear operator
\begin{equation}\label{2.17}
\I_0:\mathcal{C}^\infty(\M)\To \mathcal{C}^\infty(\p_+S\M)
\end{equation}
acting as follows
\begin{equation}\label{2.18}
\I_0 f(x,\theta)=\int_0^{\ell_+(x,\theta)}f(\gamma_{x,\theta}(t))\, \dd t.
\end{equation}
Similarly to $\I_1$, $\I_0$ has an extension, still denoted by $\I_0$:
\begin{equation}\label{2.19}
\I_0:H^k(\M)\To H^k(\p_+S\M)
\end{equation}
for every integer $k\geq 0$. We refer to  \cite[Theorem 4.2.1]{Sh} for details.
\medskip

Considering $\I_0$ as a bounded operator from $L^2(\M)$ into
$L^2_\mu(\p_+S\M)$, we can compute its adjoint $\I_0^*:L^2_\mu(\p_+S\M)\to L^2(\M)$
\begin{equation}\label{2.20}
\I_0^*\Psi(x)=\int_{S_x\M}\widecheck{\Psi}(x,\theta)\, \dd\omega_x(\theta),
\end{equation}
where $\widecheck{\Psi}$ is the extension of $\Psi$ from $\p_+S\M$ to $S\M$ which is constant on every orbit of the geodesic flow:
$$
\widecheck{\Psi}(x,\theta)=\Psi(\gamma_{x,\theta}(\ell_+(x,\theta))).
$$
Let $\M_1$ be a simple manifold so that $\M_1^{\textrm{int}}\supset \M$ and consider the normal operator $N_0=\I_0^*\I_0$. Then there exist two constants
$C_1>0, C_2>0$ such that
\begin{equation}\label{2.21}
C_1\norm{f}_{L^2(\M)}\leq\norm{N_0(f)}_{H^1(\M_1)}\leq C_2\norm{f}_{L^2(\M)}
\end{equation}
for any $f\in L^2(\M)$, see \cite{SU}. 

If $\mathcal{O}$ is an open set of $\M_{1}$,  $N_0$ is an
elliptic  pseudo-differential operator of order $-1$ on $\Omega$, whose principal symbol is a multiple of $\abs{\xi}^{-1}$, see \cite{SU}.
Therefore there exists a constant $C_k>0$ such that, for all $f\in H^k(\mathcal{O})$ compactly supported in $\mathcal{O}$,
\begin{equation}\label{2.22}
\norm{N_0(f)}_{H^{k+1}(\M_{1})}\leq C_k\norm{f}_{H^k(\mathcal{O})}.
\end{equation}
%%%%%%%%%%%%%%%%%%%%%%%%%%%%%%%
%%%%%%%%%%%%%%%%%%%%%%%%
\section{Asymptotic spectral analysis}\label{sec:Prelim}
\setcounter{equation}{0}
%%%%%%%%%%%%%%%%%%%%%%%%
%%%%%%%%%%%%%%%%%%%%%%%%%%%%%%%%%%%%%%%
We fix in all of this section $B_\ell=(A_\ell,q_\ell)\in \mathscr{B}_r$, $\ell=1,2$, satisfying the assumptions of Theorem \ref{t1}. As in Section 1, $H_{B_\ell}$, $\ell= 1,2$, is the operator defined by \eqref{1.2} and \eqref{1.3} when $B=B_\ell$. Furthermore, for $\lambda \in \rho (H_{B_\ell})$, denote by $R_{B_\ell}(\lambda)$ the resolvent of $H_{B_\ell} $ and, for $s\in [0,1/2)$, recall the following classical resolvent estimate 
\begin{equation}\label{3.1}
\norm{R_{B_\ell}(\lambda)}_{\mathscr{L}\para{L^2(\M);H^{2s}(\M)}}\leq \frac{C_s}{\abs{\Im \lambda}^{1-s}},\quad\ell=1,2.
\end{equation}
For $f \in H^{3/2}(\p \M)$ and $\lambda \in \rho (H_{B_\ell})$, $\ell=1,2$, consider the Dirichlet problem
\begin{equation}\label{3.2}
\left\{ 
\begin{array}{ll} 
(\mathcal{H}_{B_\ell} -\lambda )u=0  &\textrm{in}\,\, \M ,\cr
u =f  &\textrm{on}\,\, \p \M.
\end{array}
\right.
\end{equation}

 Let $\kappa$ be a boundary defining function, that is a smooth function $\kappa : \bar{\M} \to \R_+$ such that
\begin{itemize}
     \item $\kappa(x)>0$ for all $x \in \M^{\rm int}$,
     \item $\kappa|_{\p\M}=0$ and $d\kappa|_{\p M} \neq 0$.
\end{itemize}
We recall that one can construct such a function by combining local coordinates with boundary distance functions or by considering the first eigenvalue of the Dirichlet Laplacian.
We can now state the following result.
%%%
\begin{lemma}\label{L.3.1}
If $f \in H^{3/2}(\p \M)$ and $\lambda \in \rho (H_{B_\ell})$, then the BVP \eqref{3.2} has a unique solution $u_\ell(\lambda ) =u_\ell ^f(\lambda ) \in H^2(\M)$ given by the series
\begin{equation}\label{3.3}
u_\ell(\lambda )= \sum_{k \geq 1} \frac{\seq{ f,\psi_{\ell,k}}}{\lambda - \lambda_{\ell,k}}\,  \phi_{\ell,k},
\end{equation}
the convergence takes place in $H^1(\M)$. Moreover, for any neighborhood $\mathcal{V}$ of $\p \M$ in  $\M$,  we have
\begin{equation}\label{3.4}
\lim_{\lambda\to-\infty} \para{\|u_{\ell}(\lambda)\|_{L^2(\M)} +\norm{\kappa d u_{\ell}(\lambda)}_{L^2(\M )}}= 0 .
\end{equation}
\end{lemma}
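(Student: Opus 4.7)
The proof proceeds in four steps: existence and uniqueness of $u_\ell(\lambda)\in H^2(\M)$, the Fourier expansion via Green's formula, convergence of the series, and finally the decay estimate \eqref{3.4}.

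For existence and uniqueness, I would choose a bounded right inverse of the Dirichlet trace and lift $f\in H^{3/2}(\p\M)$ to some $F\in H^2(\M)$. Setting $v = -R_{B_\ell}(\lambda)\bigl[(\mathcal{H}_{B_\ell}-\lambda)F\bigr]\in \mathscr{D}(H_{B_\ell})\subset H_0^1(\M)$, the function $u_\ell(\lambda) := F+v$ solves \eqref{3.2}, and elliptic regularity up to the boundary for the magnetic Laplacian upgrades $v$ to $H^2$, so $u_\ell(\lambda)\in H^2(\M)$. Uniqueness is immediate since the difference of two solutions lies in $\mathscr{D}(H_{B_\ell})\cap\ker(H_{B_\ell}-\lambda)=\{0\}$.

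For the series expansion I would apply Green's formula \eqref{2.5} with $u=u_\ell(\lambda)$ and $w=\phi_{\ell,k}$. Using $\phi_{\ell,k}|_{\p\M}=0$, the reality of $\lambda_{\ell,k}$ and of $q_\ell$, and the eigenvalue equations $\mathcal{H}_{B_\ell}u_\ell=\lambda u_\ell$, $\mathcal{H}_{B_\ell}\phi_{\ell,k}=\lambda_{\ell,k}\phi_{\ell,k}$, the interior contributions collapse to $(\lambda-\lambda_{\ell,k})\para{u_\ell(\lambda),\phi_{\ell,k}}$ and the boundary contributions to $\langle f,\psi_{\ell,k}\rangle$, which gives the coefficient formula $\para{u_\ell(\lambda),\phi_{\ell,k}} = \langle f,\psi_{\ell,k}\rangle/(\lambda-\lambda_{\ell,k})$ of \eqref{3.3}. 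Convergence in $L^2(\M)$ follows at once from Parseval's identity applied to $u_\ell(\lambda)\in L^2(\M)$; for the stronger convergence claim, I would expand $v = u_\ell(\lambda)-F\in H_0^1(\M)\cap H^2(\M)$ in the orthonormal basis $(\phi_{\ell,k})$ and invoke the quadratic form identity $\|\nabla_{A_\ell}\phi_{\ell,k}\|_{L^2}^2 = \lambda_{\ell,k} - \int_\M q_\ell|\phi_{\ell,k}|^2\,\dv$, which yields $\|\phi_{\ell,k}\|_{H^1}^2\le C(1+|\lambda_{\ell,k}|)$; the required summability of $(1+|\lambda_{\ell,k}|)|(v,\phi_{\ell,k})|^2$ is then just the $H_0^1$-norm of $v$.

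The main obstacle is the decay estimate \eqref{3.4}. For the $L^2$ part I would fix $\lambda_0<\lambda_{\ell,1}$; the series representation gives, for $\lambda\le\lambda_0$,
$$
\|u_\ell(\lambda)\|_{L^2(\M)}^2 \;=\; \sum_{k\ge 1}\frac{|\langle f,\psi_{\ell,k}\rangle|^2}{(\lambda-\lambda_{\ell,k})^2} \;\le\; \|u_\ell(\lambda_0)\|_{L^2(\M)}^2 < \infty,
$$
and since every summand tends to $0$ as $\lambda\to-\infty$, the dominated convergence theorem yields $\|u_\ell(\lambda)\|_{L^2(\M)}\to 0$. For the weighted gradient, I would test $(\mathcal{H}_{B_\ell}-\lambda)u_\ell=0$ against $\kappa^2\overline{u_\ell}$ and apply Green's formula \eqref{2.5}: since $\kappa|_{\p\M}=0$, no boundary term appears, and controlling the cross-term $2\int_\M \kappa\overline{u_\ell}\seq{\nabla_{A_\ell}u_\ell,\nabla\kappa}\,\dv$ by Cauchy--Schwarz with a small parameter produces, for $|\lambda|>r$,
$$
\tfrac{1}{2}\int_\M \kappa^2|\nabla_{A_\ell}u_\ell|^2\,\dv + (|\lambda|-r)\int_\M \kappa^2|u_\ell|^2\,\dv \;\le\; C\|u_\ell(\lambda)\|_{L^2(\M)}^2.
$$
Combined with the $L^2$-decay already proved, this forces $\|\kappa\nabla_{A_\ell}u_\ell\|_{L^2(\M)}\to 0$, and finally $\|\kappa\, du_\ell(\lambda)\|_{L^2(\M)}\to 0$ follows from the identity $\nabla u_\ell = \nabla_{A_\ell} u_\ell - iu_\ell A_\ell^\sharp$ together with $\|A_\ell\|_\infty<\infty$.
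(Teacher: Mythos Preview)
Your proof follows essentially the same route as the paper's: the paper defers the series representation \eqref{3.3} and the $L^2$-decay to \cite[Lemma 2.1]{Ki1}, and your treatment of these (lift $f$ to $F$, correct by the resolvent, compute Fourier coefficients via Green's formula, dominated convergence for the $L^2$-limit) is exactly the standard argument behind that reference. For the weighted gradient part of \eqref{3.4} you reproduce precisely the paper's Caccioppoli inequality: multiply by $\kappa^2\overline{u_\ell}$, use $\kappa|_{\p\M}=0$ to kill the boundary term, and absorb cross terms by Cauchy--Schwarz. The only cosmetic difference is that you run the energy estimate with the magnetic gradient $\nabla_{A_\ell}u_\ell$ and pass to $du_\ell$ at the end via $\nabla u_\ell=\nabla_{A_\ell}u_\ell-iu_\ell A_\ell^\sharp$, whereas the paper expands $\Delta_{A_\ell}$ and works with $du_\ell$ directly; both yield the same inequality \eqref{3.5}.

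One point deserves a caveat: your justification of the $H^1$-convergence of the series \eqref{3.3} does not close. You show that $\sum_k (v,\phi_{\ell,k})\phi_{\ell,k}$ converges in $H^1$ to $v=u_\ell(\lambda)-F$, but the series in \eqref{3.3} has coefficients $(u_\ell,\phi_{\ell,k})=(v,\phi_{\ell,k})+(F,\phi_{\ell,k})$, so what remains is the $H^1$-convergence of $\sum_k (F,\phi_{\ell,k})\phi_{\ell,k}$, which fails for $F\notin H_0^1(\M)$. In fact the assertion as stated cannot hold for $f\neq 0$: the partial sums lie in $H_0^1(\M)$, which is closed in $H^1(\M)$, while $u_\ell(\lambda)|_{\p\M}=f$. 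This is a looseness in the lemma's statement rather than in your strategy; the subsequent arguments in the paper only use the $L^2$-convergence and the decay \eqref{3.4}, both of which you establish correctly.
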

\begin{proof}
The proof of \eqref{3.3} and
$$
\lim_{\lambda\to-\infty} \|u_{\ell}(\lambda)\|^2_{L^2(\M)} = 0
$$
is quite similar to that of \cite[Lemma 2.1]{Ki1}. \\
The proof of \eqref{3.4} is then completed by establishing the following Caccioppoli's type inequality, where $\lambda<0$:
\begin{equation}\label{3.5}
 \norm{\kappa d u_\ell(\lambda )}_{L^2(\M)}\leq C\norm{u_{\ell}(\lambda)}_{L^2(\M)},
 \end{equation}
 the constant $C$ only depends on $r$ and $\M$.
 
For the sake of simplicity, we omit the subscript  $\ell$ in $u_\ell(\lambda)$ and $B_\ell$.  Multiplying the first equation of \eqref{3.2} by $\kappa^2\overline{u}(\lambda)$, using the fact that $\kappa_{|\partial \M}=0$ and applying Green's formula, we obtain
\begin{align}
0&=-\int_\M \Delta_A u(\lambda) \kappa^2\overline{u}(\lambda)\dv+\int_\M(q-\lambda)\kappa^2\abs{u(\lambda)}^2\dv \label{3.6}
\\
&= \int_\M |\kappa du(\lambda )|^2\dv+2\int_\M \seq{\kappa  du(\lambda ),  \overline{u}(\lambda) d\kappa}\dv \nonumber
\\
&\quad
+2 \Im \int_\M \seq{\kappa \overline{u}(\lambda )A,\kappa du(\lambda)}\dv +\int_\M\big(2i \langle A,\kappa d\kappa\rangle+(|A|^2+q-\lambda)\kappa^2\big)\abs{u(\lambda )}^2\dv.\nonumber
\end{align}
An application of Cauchy-Schwarz's inequality yields
\begin{align*} 
\norm{\kappa d u(\lambda )}_{L^2(\M)}^2-\lambda\norm{\kappa u(\lambda )}^2_{L^2(\M)} &\leq C\norm{u(\lambda ) }_{L^2(\M)}\norm{\kappa d u(\lambda )}_{L^2(\M)}+C\norm{u(\lambda )}^2_{L^2(\M)}
\\
&\leq C'\norm{u(\lambda ) }^2_{L^2(\M)}+\frac{1}{2}\norm{\kappa du(\lambda )}_{L^2(\M)}^2.
\end{align*}
Then, it follows
\begin{equation}\label{3.7}
\frac{1}{2}\norm{\kappa d u(\lambda )}_{L^2(\M)}^2-\lambda\norm{\kappa u(\lambda )}^2_{L^2(\M)}\leq C\norm{u(\lambda )}^2_{L^2(\M)}
\end{equation}
and since $-\lambda >0$
\begin{equation}\label{3.8}
\norm{\kappa d u(\lambda )}_{L^2(\M)}^2\leq C\norm{ u(\lambda)}^2_{L^2(\M)},
\end{equation}
implying Caccioppoli's inequality \eqref{3.5}.
\end{proof}
%%%%%%%%%%%%%%%%%%%%%%%
\begin{lemma}\label{L.3.2} 
Let  $f \in H^{3/2}(\p \M)$ and,  for $\mu \in \rho (H_{B_1})\cap \rho (H_{B_2})$, set 
\[w_{1,2}(\mu )=u_{1}(\mu )-u_{2}(\mu ) \in H^2(\M),\]  where $u_{\ell}(\mu)$ is the corresponding solution to \eqref{3.2} with $B_\ell$ and $\lambda$ are substituted by $B_\ell$ and $\mu$. Then we have that  $w_{1,2}(\mu )$ converges to $0$ in $H^{2}(\M)$ as $\mu\to-\infty$. In particular, $\partial_\nu w_{1,2}(\mu ) \to 0$ in $L^2(\p \M)$ as $\mu \to -\infty$.
\end{lemma}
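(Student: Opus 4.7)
The plan is to write $w_{1,2}(\mu)$ as the resolvent $R_{B_1}(\mu)$ applied to a source term that tends to $0$ in $L^2(\M)$ as $\mu\to-\infty$, and then use elliptic regularity for $H_{B_1}$ to boost this to $H^2(\M)$ convergence. The step that requires care is controlling the cross term $(A_1-A_2)\cdot\nabla u_2(\mu)$, since $\nabla u_2(\mu)$ is \emph{not} known to be small globally, only when weighted by $\kappa$.

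First I would observe that $w_{1,2}(\mu)\in H^1_0(\M)\cap H^2(\M)$ satisfies
\[
(\mathcal{H}_{B_1}-\mu)\,w_{1,2}(\mu)= -(\mathcal{H}_{B_1}-\mathcal{H}_{B_2})\,u_2(\mu),
\]
hence $w_{1,2}(\mu)=-R_{B_1}(\mu)F(\mu)$, where, expanding as in \eqref{1.1},
\[
F(\mu)= -2i(A_1-A_2)\cdot\nabla u_2(\mu)-i\,\delta(A_1-A_2)\,u_2(\mu)+\bigl(|A_1|^2-|A_2|^2+q_1-q_2\bigr)u_2(\mu).
\]
Next I would exploit the boundary condition \eqref{t3e}: since $A_1-A_2$ and its first derivatives vanish on $\p\M$ and the coefficients are $W^{2,\infty}$, a Taylor expansion in the normal direction combined with the fact that $\kappa$ is a boundary defining function gives globally on $\M$
\[
|A_1-A_2|\leq C\kappa^2,\qquad |\nabla(A_1-A_2)|+|\delta(A_1-A_2)|\leq C\kappa,
\]
with $C$ depending only on $r$ and $\M$.

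The key $L^2$ estimate then follows by writing
\[
(A_1-A_2)\cdot \nabla u_2(\mu)=\frac{A_1-A_2}{\kappa}\cdot\bigl(\kappa\,du_2(\mu)\bigr)^{\sharp},
\]
so that, since $(A_1-A_2)/\kappa$ is bounded,
\[
\|(A_1-A_2)\cdot\nabla u_2(\mu)\|_{L^2(\M)}\leq C\,\|\kappa\, du_2(\mu)\|_{L^2(\M)}\longrightarrow 0
\]
by \eqref{3.4}. The remaining terms are all of the form (bounded function)$\cdot u_2(\mu)$, hence go to $0$ in $L^2(\M)$ via $\|u_2(\mu)\|_{L^2(\M)}\to 0$. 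Combining, $\|F(\mu)\|_{L^2(\M)}\to 0$ as $\mu\to-\infty$.

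Finally I would upgrade this to $H^2$ convergence. For $\mu$ sufficiently negative, $v=R_{B_1}(\mu)g$ with $g\in L^2(\M)$ satisfies $-\Delta_{A_1}v=g-(q_1-\mu)v$, and the standard resolvent bound on a bounded self-adjoint operator gives $\|v\|_{L^2(\M)}\leq |\mu|^{-1}\|g\|_{L^2(\M)}$. Hence $\|\Delta_{A_1}v\|_{L^2(\M)}\leq C\|g\|_{L^2(\M)}$ with $C$ independent of $\mu$, and elliptic regularity up to the smooth boundary for the magnetic Laplacian with Dirichlet data yields $\|v\|_{H^2(\M)}\leq C\|g\|_{L^2(\M)}$, uniformly for $\mu\le -\mu_0$. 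Applying this to $g=F(\mu)$ gives $\|w_{1,2}(\mu)\|_{H^2(\M)}\to 0$, and the continuity of the normal trace $H^2(\M)\to H^{1/2}(\p\M)\hookrightarrow L^2(\p\M)$ finishes the proof. The main obstacle is precisely the first cross term above: without the extra two orders of vanishing of $A_1-A_2$ at the boundary supplied by \eqref{t3e}, the weighted bound \eqref{3.4} on $\kappa\,du_2(\mu)$ would not suffice.
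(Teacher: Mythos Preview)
Your argument is correct and follows essentially the same route as the paper: write $(\mathcal{H}_{B_1}-\mu)w_{1,2}(\mu)=h(\mu)$, use \eqref{t3e} to bound $|A_1-A_2|\le C\kappa$ so that the gradient term is controlled by $\|\kappa\,du_2(\mu)\|_{L^2}$ and hence $\|h(\mu)\|_{L^2}\to 0$ via \eqref{3.4}, and then upgrade to $H^2$. Two minor remarks: the paper only needs (and only claims) one order of vanishing $|A_1-A_2|\le C\kappa$, not your $C\kappa^2$, to make $(A_1-A_2)/\kappa$ bounded; and for the $H^2$ step the paper instead freezes a reference value $\mu_*$ and writes $(H_{B_1}-\mu_*)w=h+(\mu-\mu_*)w$, combining the fixed isomorphism $(H_{B_1}-\mu_*)^{-1}:L^2\to H^2$ with an energy bound $|\mu|^{1/2}\|w\|_{L^2}\le C\|h\|_{L^2}$, which is equivalent to your direct elliptic-regularity argument.
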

%%%%
\begin{proof}
For the sake of simplicity, we use in this proof $w(\mu)$ instead of $w_{1,2}(\mu )$. Since the trace map $v\mapsto \partial_\nu v$ is  continuous from $H^{2}(\M)$ into $L^2(\p \M)$, it is enough to show that $\|w(\mu )\|_{H^{2}(\M)} \to 0$ when $\mu \to -\infty$.  Let $\mu<\mu_\ast<-2\norm{q}_\infty$, for some fixed $\mu _\ast<0$. It is straightforward to check that $w(\mu )$ is the solution of the boundary value problem
\begin{equation}\label{3.9}
\left\{ 
\begin{array}{ll} 
\left(\mathcal{H}_{B_1} - \mu \right)w(\mu )=h(\mu)  & \textrm{in}\,\, \M,
\\ 
w(\mu )= 0 &\textrm{on}\,\,  \p \M.
\end{array}
\right.
\end{equation}
Here $h(\mu)$ is given by
\begin{equation}\label{3.10}
h(\mu )=-2i\seq{A_2-A_1, du_{2}(\mu)}+\left(V_2-V_1\right)u_{2}(\mu )
\end{equation}
with 
$$
V_j=-i\delta A_j+\abs{A_j}^2+q_j, \quad j=1,2.
$$ 
Multiplying the first equation of \eqref{3.9} by $\overline{w}(\mu)$, we apply Green's formula \eqref{2.5} in order to obtain
\begin{align*}
\int_\M h(\mu)\overline{w}(\mu)\dv&=\int_\M\h_{B_1}w(\mu)\overline{w}(\mu)\dv-\int_\M\mu\abs{w(\mu)}^2\dv\cr
&=\int_\M\abs{\nabla_{A_1} w}^2\dv+\int_\M(q-\mu)\abs{w}^2\dv.
\end{align*}
We deduce that, for $-\mu$ sufficiently large,
$$
(-\norm{q}_\infty-\frac{\mu}{2})\norm{w(\mu)}^2_{L^2(\M)}+\frac{\abs{\mu}}{4}\norm{w(\mu)}^2_{L^2(\M)}\leq C\norm{h(\mu)}^2_{L^2(\M)},
$$
for some positive constant $C$, not dependent  on $\mu$, and then we conclude that
\begin{equation}\label{3.11}
\abs{\mu}\norm{w(\mu)}^2_{L^2(\M)}\leq C\norm{h(\mu)}^2_{L^2(\M)}.
\end{equation}
Moreover we have
\begin{equation}\label{3.12}
\left\{ 
\begin{array}{ll} 
\left(H_{B_1} - \mu_* \right)w(\mu )=h(\mu)+(\mu-\mu_*)w(\mu)  & \textrm{in}\,\, \M,
\\ 
w(\mu )= 0 &\textrm{on}\,\,  \p \M.
\end{array}
\right.
\end{equation}
Using that  $(H_{B_1} - \mu^\ast )^{-1}$ is an isomorphism from $L^2(\M)$ onto $H^2(\M)$, there exists a constant $C$, 
depending on $\M$ and $B_1$, so that
\begin{eqnarray}\label{3.13}
\norm{w(\mu )}_{H^2(\M)}&\leq & C\norm{h(\mu)+(\mu-\mu_*)w(\mu))}_{L^2(\M)}\cr
&\leq & C\para{\norm{h(\mu)}_{L^2(\M)}+\abs{\mu-\mu_*}\norm{w(\mu)}_{L^2(\M)}}\cr
&\leq &  C\para{\norm{h(\mu)}_{L^2(\M)}+2\abs{\mu}\norm{w(\mu)}_{L^2(\M)}},
\end{eqnarray}
where the positive constant $C$ is not dependent  on $\mu$.\\
Using now the  estimate \eqref{3.11},  we obtain
\begin{equation}\label{3.14}
\norm{w(\mu )}_{H^2(\M)}\leq 4C\norm{h(\mu)}_{L^2(\M)}.
\end{equation}
On the other hand,  in view of \eqref{t3e} there exists $C>0$  such that 
\begin{equation}\label{tata3.15}|A_1(x)-A_2(x)|\leq C\kappa(x),\quad x\in \M.\end{equation}
%To see this, it is enough to use local coordinate in order to replace $\M$ by the half space $\{x=(x_1,\ldots,x_n)\in\R^n:\ x_n>0\}$ and $\kappa$ by $x\mapsto x_n$.
Applying \eqref{tata3.15}, we obtain
\begin{equation}\label{3.15}
\norm{h(\mu )}_{L^2(\M)}\leq C''\left(\norm{\kappa d u_{2}(\mu )}_{L^2(\M)}+\norm{ u_{2}(\mu )}_{L^2(\M)}\right)
\end{equation}
for some constant $C''$ independent of $\mu$. Then, according to \eqref{3.4} in Lemma \ref{L.3.1}, we get
\begin{equation}\label{3.16}
\limsup_{\mu\to-\infty}\norm{h(\mu )}_{L^2(\M)}=0,
\end{equation}
entailing by \eqref{3.14}
\begin{equation}\label{3.17}
\limsup_{\mu\to-\infty}\norm{w(\mu )}_{H^2(\M)}=0.
\end{equation}
This completes the proof of the lemma.
\end{proof}
%%%%%%%%%%%%%%%%%%%%%%%%%%%%%%%%
The following lemma will be useful in the sequel. We omit its proof since it is quite similar to that in \cite{KKS,Ki1}.
\begin{lemma}\label{L.3.3}
Let $f\in H^{3/2}(\p\M)$ and, for $\mu,\lambda\in\rho(H_{B_\ell})$, set $w_\ell(\lambda ,\mu)=  u_\ell(\lambda ) - u_\ell(\mu)$, where $u_\ell(\mu)$ is the solution of \eqref{3.2} when $\lambda$ is substituted by $\mu$. Then we have  
\begin{equation}\label{3.18}
\para{\partial_\nu+iA_\ell(\nu)} w_\ell(\lambda ,\mu)  = \sum_{k\geq1} 
\frac{(\mu - \lambda)\seq{ f,\psi_{\ell,k}}}{(\lambda - \lambda_{\ell,k})(\mu - \lambda_{\ell,k})}\, \psi_{\ell,k},
\end{equation}
the convergence takes place in $H^{1/2}(\p \M)$.
\end{lemma}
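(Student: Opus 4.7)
The plan is to subtract the series expansions of $u_\ell(\lambda)$ and $u_\ell(\mu)$ provided by Lemma \ref{L.3.1} and then verify that the resulting series for $w_\ell(\lambda,\mu)$ converges in $H^2(\M)$, after which the continuous magnetic normal trace can be applied term-wise. Using the elementary identity
\[
\frac{1}{\lambda - \lambda_{\ell,k}} - \frac{1}{\mu - \lambda_{\ell,k}} = \frac{\mu - \lambda}{(\lambda - \lambda_{\ell,k})(\mu - \lambda_{\ell,k})},
\]
I would first obtain
\[
w_\ell(\lambda,\mu) = \sum_{k\geq 1} \frac{(\mu - \lambda)\seq{f,\psi_{\ell,k}}}{(\lambda - \lambda_{\ell,k})(\mu - \lambda_{\ell,k})}\phi_{\ell,k},
\]
with $H^1(\M)$-convergence inherited from Lemma \ref{L.3.1}.

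The crux of the argument is the upgrade of this convergence to $H^2(\M)$. Denote by $w_\ell^N$ the $N$-th partial sum of the series above and by $u_\ell^N(\mu)$ the $N$-th partial sum of the series representing $u_\ell(\mu)$. Each $\phi_{\ell,k}$ belongs to $\mathscr{D}(H_{B_\ell})$, which by standard elliptic regularity coincides with $H^2(\M)\cap H_0^1(\M)$, hence $w_\ell^N\in H^2(\M)\cap H_0^1(\M)$ as well, and the eigenvalue relation $\mathcal{H}_{B_\ell}\phi_{\ell,k}=\lambda_{\ell,k}\phi_{\ell,k}$ together with a direct computation yields
\[
(\mathcal{H}_{B_\ell}-\lambda)\,w_\ell^N = -(\mu-\lambda)\,u_\ell^N(\mu)\quad\text{in }\M,\qquad w_\ell^N|_{\p\M}=0.
\]
Since the resolvent $R_{B_\ell}(\lambda)$ is bounded from $L^2(\M)$ into $H^2(\M)\cap H_0^1(\M)$ for the fixed $\lambda\in\rho(H_{B_\ell})$, and $u_\ell^N(\mu)\to u_\ell(\mu)$ in $L^2(\M)$ by Lemma \ref{L.3.1}, the identity $w_\ell^N=-(\mu-\lambda)R_{B_\ell}(\lambda)u_\ell^N(\mu)$ forces $w_\ell^N$ to converge in $H^2(\M)$, the limit being forced to equal $w_\ell(\lambda,\mu)$ by uniqueness of the $H^1$-limit already obtained.

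Finally, since $A_\ell\in W^{2,\infty}(\M,T^*\M)$, the magnetic normal trace $u\mapsto(\p_\nu+iA_\ell(\nu))u$ is a continuous operator from $H^2(\M)$ into $H^{1/2}(\p\M)$ (the usual normal trace lands in $H^{1/2}(\p\M)$, and multiplication by the Lipschitz function $A_\ell(\nu)$ maps the Dirichlet trace in $H^{3/2}(\p\M)$ into $H^{1/2}(\p\M)$). Applying this operator term-wise to the partial sums $w_\ell^N$ and recalling that $\psi_{\ell,k}=(\p_\nu+iA_\ell(\nu))\phi_{\ell,k}$ on $\p\M$ delivers the claimed expansion with convergence in $H^{1/2}(\p\M)$. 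The only genuine obstacle is the $H^1\to H^2$ upgrade; it succeeds precisely because the extra factor $(\mu-\lambda_{\ell,k})^{-1}$ in the denominator corresponds morally to one additional application of $R_{B_\ell}(\lambda)$ and supplies exactly the missing two derivatives.
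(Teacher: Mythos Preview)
Your argument is correct. The paper itself omits the proof of this lemma, stating only that it is ``quite similar to that in \cite{KKS,Ki1}''. Your approach---recognizing that the difference $w_\ell(\lambda,\mu)$ lies in $\mathscr{D}(H_{B_\ell})$ and satisfies the resolvent-type identity $w_\ell(\lambda,\mu)=(\lambda-\mu)R_{B_\ell}(\lambda)u_\ell(\mu)$, then using the $L^2\to H^2$ mapping property of the resolvent (which the paper invokes explicitly in the proof of Lemma~\ref{L.3.2}) to upgrade convergence of the partial sums to $H^2(\M)$, and finally passing to the boundary via continuity of the trace---is exactly the standard route taken in those references. One minor simplification: since each $\phi_{\ell,k}$ vanishes on $\p\M$, the $iA_\ell(\nu)$ part of the magnetic normal trace contributes nothing term-wise, so the multiplier argument on $H^{3/2}(\p\M)$ is not actually needed; continuity of the ordinary Neumann trace $\partial_\nu:H^2(\M)\to H^{1/2}(\p\M)$ suffices.
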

%%%%%%%%%%%%%%%%%%%%%%%%%%%%%%%%%%%%%
%%%%%%%%%%%%%%%%%%%%%%%%%%%%%%%%%%%%
\section{Isozaki's representation formula}
\setcounter{equation}{0}
%%%%%%%%%%%%%%%%%%%%%%%%%%%%%%%%%%%%%
%%%%%%%%%%%%%%%%%%%%%%%%%%%%%%%%%%%%%%%%%
In the present section we provide a version of Isozaki's approach \cite{I}, based on the so-called \textit{Born approximation} method. The usual anzats used to solve the problem of determining the coefficients of a magnetic Laplace-Beltrami operator, from the corresponding Dirichlet-to-Neumann map will be useful in our analysis. 
Let us describe briefly this method.

In all of this section $B_\ell=(A_\ell,q_\ell)\in\mathscr{B}_r$, $\ell=1,2$, with $A_\ell$ satisfying \eqref{t3e}. We extend the covector $A_1$ to a $W^{2,\infty}$ covector  on $\M_1$ supported in the interior of $\M_1$ and still denoted by $A_1$. Then, we consider the extension of $A_2$ to $\M_1$, still denoted by $A_2$, defined by 
\bel{eqi} A_1(x)=A_2(x),\quad x\in \M_1\setminus \M.\ee
Then, \eqref{t3e} implies that $A_2\in W^{2,\infty}(\M_1;T^*\M_1)$.
We fix also  $A=A_1-A_2$.
\subsection{Representation formula}
If $u_\ell(\lambda )$, $\lambda \in \rho (H_{B_1})\cap \rho (H_{B_2})$, is the solution of \eqref{3.2} when $B=B_\ell$, define  the Dirichlet-to-Neumann map by
\begin{equation}\label{4.1}
\Lambda_{B_\ell}(\lambda ): f\in H^{{3/2}}(\partial \M) \mapsto\para{\partial_\nu +iA_\ell(\nu)}u_{\ell}(\lambda ){_{|\p \M}},\quad \ell=1,2.
\end{equation}
We fix $\psi\in\mathcal{C}^2(\M)$  a function satisfying  the eikonal equation
\begin{equation}\label{4.2}
\abs{d\psi}^2=\sum_{i,j=1}^n\g^{ij}\frac{\p\psi}{\p x_i}\frac{\p\psi}{\p
x_j}=1.
\end{equation}
We set also two functions $\alpha_\ell\in H^2(\M)$ solving the transport equations
\begin{equation}\label{4.3}
\seq{d\psi,d\alpha_\ell}+\frac{1}{2} (\Delta \psi)\alpha_\ell=0,\quad \ell=1,2.
\end{equation}
This function will be given in Section 4.2.
Consider also two functions $\beta_{A_\ell}\in H^2(\M) $, $\ell=1,2$, solutions of the transport equations
\begin{equation}\label{4.4}
\seq{d\psi,d\beta_{A_\ell}}+i \seq{A_\ell ,d\psi}\beta_{A_\ell}=0,\quad \forall \, x\in\M,\quad\ell=1,2.
\end{equation}
%%%%%%%%%%%%%%%%%%%%%%%%%%
Henceforth $\tau>1$ and $\lambda_\tau =\tau+i $. Let
\begin{equation}\label{4.5} 
\begin{array}{lll}
\varphi_{1,\tau} ^\ast(x)=e^{i\lambda_\tau\psi(x)}\alpha_1\beta_{A_1}(x):=e^{i\lambda_\tau\psi(x)}\bb_1 (x),\cr \ww(x)=e^{i\overline{\lambda_\tau}\psi(x)}\alpha_2\beta_{A_2}(x):=e^{i\overline{\lambda_\tau}\psi(x)}\bb_2 (x),
\end{array}
\end{equation}
where, for $\ell=1,2$, $\alpha_\ell$ is a solution of \eqref{4.3} and $\beta_{A_\ell}$ is a solution of \eqref{4.4}.

Define
\begin{equation}\label{4.6} 
S_{B_\ell}(\tau)=\seq{\Lambda_{B_\ell}(\lambda_\tau ^2) \w,\ww}=\int_{\p \M}\Lambda_{B_\ell}(\lambda_\tau ^2) \w\overline{\ww} \ds,\quad  \ell=1,2.
\end{equation}
%%%%%%%%%%%%%%%%%%%%%%%%%%%%%%%%%%%%%%%%
\begin{lemma}\label{L.4.1} 
We have
\begin{multline}\label{4.7}
S_{B_1}(\tau)=\int_{\p\M}\bb_1\para{\p_\nu\overline{\bb_2}-iA_1(\nu)\overline{\bb_2}-i\lambda_\tau\overline{\bb_2}\p_\nu\psi}\ds\cr
+\int_\M\bb_1\overline{\h_{B_1} (\bb_2)}\dv-2\lambda_\tau\int_\M\bb_1\overline{\bb_2}\seq{A,d\psi}\dv\cr
-\int_\M R_{B_1}(\lambda_\tau^2) \para{e^{i\lambda_\tau\psi}\h_{B_1}\para{\bb_1}}\para{e^{-i\lambda_\tau\psi}\overline{\h_{B_1} (\bb_2)}-2\lambda_\tau e^{-i\lambda_\tau\psi}\overline{\bb_2}\seq{A,d\psi}}\dv.
\end{multline}
and
\begin{multline}\label{4.8}
S_{B_2}(\tau)=\int_{\p\M}\bb_1\para{\p_\nu\overline{\bb_2}-iA_2(\nu)\overline{\bb_2}-i\lambda_\tau\overline{\bb_2}\p_\nu\psi}\ds+\int_M\bb_1\overline{\h_{B_2} (\bb_2)}\dv\cr
-\int_\M R_{B_2}(\lambda_\tau^2)\para{e^{i\lambda_\tau\psi} \para{\h_{B_2}\para{\bb_1}-2\lambda_\tau\seq{A,d\psi}\bb_1}}\para{e^{-i\lambda_\tau\psi}\overline{\h_{B_2} (\bb_2)}}\dv.
\end{multline}
Here  $R_{B_\ell}(\lambda_\tau^2)$ is the resolvent of $H_{B_\ell}$.
\end{lemma}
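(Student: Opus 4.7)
The plan is to apply the second Green identity \eqref{2.5} to the pair $(u_\ell, \ww)$ to transform the boundary pairing
\[
S_{B_\ell}(\tau) = \int_{\p\M}(\p_\nu+iA_\ell(\nu))u_\ell\, \overline{\ww}\,\ds
\]
into volume integrals, and then to evaluate each of them by splitting $u_\ell$ into an ansatz part and a resolvent remainder. Since $\h_{B_\ell}u_\ell=\lambda_\tau^2 u_\ell$ in $\M$ with $u_\ell = \w$ on $\p\M$, formula \eqref{2.5} combined with $\overline{\overline{\lambda_\tau}^2\ww} = \lambda_\tau^2\overline{\ww}$ yields the identity
\[
S_{B_\ell}(\tau) = \int_\M u_\ell\,\overline{(\h_{B_\ell} - \overline{\lambda_\tau}^2)\ww}\,\dv + \int_{\p\M}\w\,\overline{(\p_\nu+iA_\ell(\nu))\ww}\,\ds.
\]
I then write $u_\ell = \w - R_{B_\ell}(\lambda_\tau^2) F_\ell$ with $F_\ell := (\h_{B_\ell}-\lambda_\tau^2)\w$, which decomposes the volume integral into an ``ansatz'' contribution $\int_\M\w\,\overline{(\h_{B_\ell}-\overline{\lambda_\tau}^2)\ww}\,\dv$ plus a resolvent remainder of the form $-\int_\M R_{B_\ell}(\lambda_\tau^2) F_\ell\cdot\overline{(\h_{B_\ell}-\overline{\lambda_\tau}^2)\ww}\,\dv$.

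The heart of the computation is the conjugation formula
\[
e^{-i\mu\psi}(\h_{B_\ell}-\mu^2)(e^{i\mu\psi}v) = \h_{B_\ell}v - 2i\mu\bigl[\seq{d\psi,dv}+\tfrac{1}{2}(\Delta\psi)v\bigr] + 2\mu\seq{A_\ell,d\psi}v,
\]
derived by a direct product-rule expansion together with the eikonal equation \eqref{4.2}. Applied with $v=\bb_j=\alpha_j\beta_{A_j}$, the transport equations \eqref{4.3}--\eqref{4.4} reduce the bracket to $-i\seq{A_j,d\psi}\bb_j$, so that the $O(\mu)$ contributions cancel precisely when the magnetic potential in the operator coincides with $A_j$, and otherwise leave a residual proportional to $2\mu\seq{A,d\psi}\bb_j$ with $A = A_1 - A_2$. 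Specializing to $\mu=\lambda_\tau$ with $v=\bb_1$ yields $F_1 = e^{i\lambda_\tau\psi}\h_{B_1}(\bb_1)$ (no residual) and $F_2 = e^{i\lambda_\tau\psi}[\h_{B_2}(\bb_1)-2\lambda_\tau\seq{A,d\psi}\bb_1]$, while specializing to $\mu=\overline{\lambda_\tau}$ with $v=\bb_2$ gives $(\h_{B_2}-\overline{\lambda_\tau}^2)\ww = e^{i\overline{\lambda_\tau}\psi}\h_{B_2}(\bb_2)$ and the analogous formula with a $\seq{A,d\psi}$ residual when $\ell=1$.

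Substituting these expressions into the identity above and using the phase cancellation $e^{i\lambda_\tau\psi}\,\overline{e^{i\overline{\lambda_\tau}\psi}} = 1$ collapses the exponentials in the ansatz and resolvent integrands. The ansatz volume term reduces to $\int_\M\bb_1\overline{\h_{B_\ell}(\bb_2)}\dv$ augmented by the $\seq{A,d\psi}$ residual in the $\ell=1$ case, while the resolvent remainder takes the displayed form. The boundary contribution follows by expanding $\p_\nu\overline{\ww} = e^{-i\lambda_\tau\psi}\bigl(\p_\nu\overline{\bb_2} - i\lambda_\tau(\p_\nu\psi)\overline{\bb_2}\bigr)$ and adding the $iA_\ell(\nu)$ term, producing exactly $\bb_1\bigl(\p_\nu\overline{\bb_2} - iA_\ell(\nu)\overline{\bb_2} - i\lambda_\tau\overline{\bb_2}\p_\nu\psi\bigr)$. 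The principal delicate point throughout is the careful tracking of conjugations: because $\lambda_\tau = \tau+i$ has nonzero imaginary part, one must scrupulously distinguish $\lambda_\tau$ from $\overline{\lambda_\tau}$ in the exponents of $\w,\ww$ and in the argument of the operator, which is precisely why the two ansatz functions are defined with conjugate frequencies so that these phases cancel cleanly in the final expressions; everything else is a routine combination of Green's formula, the transport-equation simplification, and substitution of the ansatz.
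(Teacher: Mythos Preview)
Your proof is correct and follows essentially the same route as the paper's own argument: apply Green's identity \eqref{2.5} to the pair $(u_\ell,\ww)$, write $u_\ell=\w-R_{B_\ell}(\lambda_\tau^2)F_\ell$, and compute $F_\ell=(\h_{B_\ell}-\lambda_\tau^2)\w$ and $(\h_{B_\ell}-\overline{\lambda_\tau}^{\,2})\ww$ using the eikonal and transport equations. The only organisational difference is that you isolate a single conjugation identity
\[
e^{-i\mu\psi}(\h_{B_\ell}-\mu^2)(e^{i\mu\psi}v)=\h_{B_\ell}v-2i\mu\Bigl[\seq{d\psi,dv}+\tfrac12(\Delta\psi)v\Bigr]+2\mu\seq{A_\ell,d\psi}v
\]
and apply it four times (to $(\ell,v,\mu)\in\{(1,\bb_1,\lambda_\tau),(2,\bb_1,\lambda_\tau),(1,\bb_2,\overline{\lambda_\tau}),(2,\bb_2,\overline{\lambda_\tau})\}$), whereas the paper carries out each of these four computations separately by hand (see \eqref{4.9}, \eqref{4.16}, \eqref{4.21} and the display preceding \eqref{4.14}). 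Your packaging is slightly cleaner and makes the origin of the $\seq{A,d\psi}$ residual more transparent, but the underlying calculation is identical.
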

\begin{proof} 
Direct computations yield
\begin{align}
&\left(\mathcal{H}_{B_1} -\lambda_\tau ^2\right)\varphi_{1,\tau} ^\ast=e^{i\lambda_\tau\psi}\h_{B_1}\para{\bb_1} \label{4.9}
\\
&\qquad +e^{i\lambda_\tau\psi}\bigg(\lambda_\tau^2 \bb_1 \para{\abs{d\psi}^2-1} -2i\lambda_\tau \beta_{A_1}\para{\seq{d\psi,d\alpha_1}+\frac{\alpha_1}{2}\Delta\psi } \nonumber \\
&\qquad -2i\lambda_{\tau} \alpha_1 \para{\seq{d\psi,d\beta_{A_1}} +i \seq{A_1,d\psi}\beta_{A_1}} \bigg) \nonumber
\end{align}
Taking into account  \eqref{4.2} and \eqref{4.3}-\eqref{4.4}, with $\ell=1$, the right-hand side of \eqref{4.9} becomes
\begin{equation}\label{4.10}
\left(\mathcal{H}_{B_1} -\lambda_\tau ^2\right)\w=e^{i\lambda_\tau\psi}\h_{B_1}\para{\bb_1} \equiv e^{i\lambda_\tau\psi}k_1.
\end{equation}
Denote by $u_1$ the solution of the BVP
\[
\left\{ 
\begin{array}{ll} 
\left(\mathcal{H}_{B_1}-\lambda_\tau ^2\right) u_1=0\quad  &\textrm{in}\; \M ,
\\ 
u_1=\w &\textrm{on}\; \p \M.
\end{array}
\right.
\]
We  split $u_1$ into two terms, $u_1=\w+v_1$, where $v_1$ is the solution of the boundary value problem
\[
\left\{ 
\begin{array}{ll} 
\left(\mathcal{H}_{B_1}-\lambda_\tau ^2\right) v_1= -e^{i\lambda_\tau \psi} k_1\quad  &\textrm{in}\; \M,
\\ 
v_1=0 &\textrm{on}\; \p \M.
\end{array}
\right.
\]
Therefore
\begin{equation}\label{4.11}
u_1=\w-\left(H_{B_1}-\lambda_\tau ^2\right)^{-1}(e^{i\lambda_\tau\psi} k_1)=\w-R_{B_1}(\lambda_\tau^2) \para{e^{i\lambda_\tau\psi} k_1}.
\end{equation}
As
\begin{equation}\label{4.12}
S_{B_1}(\tau ) =\int_{\partial \M}\para{\partial_\nu u_1+iA_1(\nu)u_1} \overline{\ww}\ds,
\end{equation}
we get by applying  formula \eqref{2.5}
\begin{align}
S_{B_1}(\tau )=\int_\M\Delta_{A_1}u_1 \overline{\ww} \dv &- \int_\M u_1\overline{\Delta_{A_1}\ww} \dv \label{4.13}
\\
&+\int_{\p \M}\w\para{\overline{\p_\nu \ww+iA_1(\nu)\ww}} \ds.\nonumber
\end{align}
On the other hand, by a simple computation and using \eqref{4.2}, \eqref{4.3} and \eqref{4.4}, we get
\begin{align*}
\Delta_{A_1} \ww&=\Delta_{A_1}(e^{i\overline{\lambda}_\tau\psi}\bb_2)\cr
&=-\overline{\lambda}_\tau^2\ww+e^{i\overline{\lambda}_\tau\psi}\Delta_{A_1} (\bb_2)-2i\overline{\lambda}_\tau e^{i\overline{\lambda}_\tau\psi}\alpha_2\para{\seq{d\psi,d\beta_2}+i\seq{A_1,d\psi}\beta_2}\cr
&\qquad +2i\overline{\lambda}_\tau\beta_2e^{i\overline{\lambda}_\tau\psi}\para{\seq{d\psi,d\alpha_2}+\frac{\alpha_2}{2}\Delta\psi}\cr
&= -\overline{\lambda}_\tau^2\ww+e^{i\overline{\lambda}_\tau\psi}\Delta_{A_1} (\bb_2)-2i\overline{\lambda}_\tau e^{i\overline{\lambda}_\tau\psi}\alpha_2\para{-i\seq{A_2,d\psi}\beta_2+i\seq{A_1,d\psi}\beta_2}\cr
&=-\overline{\lambda}_\tau^2\ww+e^{i\overline{\lambda}_\tau\psi}\Delta_{A_1} (\bb_2)+2\overline{\lambda}_\tau e^{i\overline{\lambda}_\tau\psi}\bb_2\seq{A,d\psi}.
\end{align*}
Whence, in light of  \eqref{4.11}, we find
\begin{align*}
&\int_\M u_1\overline{\Delta_{A_1}\ww} \dv
\\
&\qquad =\int_\M \para{\w-R_{B_1}(\lambda_\tau^2)\para{e^{i\lambda_\tau\psi} k_1}}
\\
&\hskip 3cm \times \para{-\lambda_\tau^2\overline{\ww}+e^{-i\lambda_\tau\psi}\overline{\Delta_{A_1} (\bb_2)}+2\lambda_\tau e^{-i\lambda_\tau\psi}\overline{\bb_2}\seq{A,d\psi}}\dv,
\end{align*}
and, using again \eqref{4.11}, we get
\begin{align*}
\int_\M \Delta_{A_1}u_1 \overline{\ww} \dv &=-\int_\M\h_{B_1}u_1\overline{\ww} \dv+\int_\M q_1u_1\overline{\ww} \dv\cr
&=\int_\M \para{\w-R_{B_1}(\lambda_\tau^2) (e^{i\lambda_\tau\psi} k_1)}\para{-\lambda_\tau^2\overline{\ww}+q_1\overline{\ww}}\dv.
\end{align*}
We deduce that
\begin{align}
&\int_\M\Delta_{A_1}u_1 \overline{\ww} \dv - \int_\M u_1\overline{\Delta_{A_1}\ww} \dv \label{4.14}
\\
& \quad =\int_\M\para{\w-R_{B_1}(\lambda_\tau^2) (e^{i\lambda_\tau\psi} k_1)}\nonumber
\\
&\hskip 3cm \times \para{e^{-i\lambda_\tau\psi}\overline{\h_{B_1} (\bb_2)}-2\lambda_\tau e^{-i\lambda_\tau\psi}\overline{\bb_2}\seq{A,d\psi}}\dv\nonumber
\\
&\quad =\int_\M\bb_1\overline{\h_{B_1} (\bb_2)}\dv-2\lambda_\tau\int_\M\bb_1\overline{\bb_2}\seq{A,d\psi}\dv\nonumber
\\
&\quad -\int_\M R_{B_1}(\lambda_\tau^2) (e^{i\lambda_\tau\psi} k_1)\para{e^{-i\lambda_\tau\psi}\overline{\h_{B_1} (\bb_2)}-2\lambda_\tau e^{-i\lambda_\tau\psi}\overline{\bb_2}\seq{A,d\psi}}\dv.\nonumber
\end{align}
Moreover
\begin{align}
&\int_{\p \M}\w(\overline{\p_\nu \ww+iA_1(\nu)\ww}) \ds\label{4.15}
\\
&\hskip 3cm=\int_{\p\M}\bb_1\para{\p_\nu\overline{\bb_2}-iA_1(\nu)\overline{\bb_2}-i\lambda_\tau\overline{\bb_2}\p_\nu\psi}\ds.\nonumber
\end{align}
Finally,  we get \eqref{4.7} by combining \eqref{4.13}, \eqref{4.14} and \eqref{4.15}.
\medskip

The proof of \eqref{4.8} is quite similar to that of \eqref{4.7}. But, for the reader's convenience, we detail the proof of \eqref{4.8}. By a simple computation we find
\begin{align}
&\left(\mathcal{H}_{B_2} -\lambda_\tau ^2\right)\w=e^{i\lambda_\tau\psi}\h_{B_2}\para{\bb_1} \label{4.16}
\\
&\qquad +e^{i\lambda_\tau\psi}\bigg(\lambda_\tau^2 \bb_1 \para{\abs{d\psi}^2-1} -2i\lambda_\tau \beta_{A_1}\para{\seq{d\psi,d\alpha_1}+\frac{\alpha_1}{2}\Delta\psi } \nonumber \\
&\qquad -2i\lambda_{\tau} \alpha_1 \para{\seq{d\psi,d\beta_{A_1}} +i \seq{A_2,d\psi}\beta_{A_1}} \bigg) \nonumber
\end{align}
Taking into account  \eqref{4.2}-\eqref{4.3} and \eqref{4.4}, the right-hand side of \eqref{4.16} takes the form
\begin{equation}\label{4.17}
\left(\mathcal{H}_{B_2} -\lambda_\tau ^2\right)\w=e^{i\lambda_\tau\psi(x)}\para{\h_{B_2}\para{\bb_1}-2\lambda_\tau\seq{A,d\psi}\bb_1}
\equiv e^{i\lambda_\tau\psi(x)}k_2.
\end{equation}
Let $u_2$ be the solution of the BVP
\[
\left\{ 
\begin{array}{ll} 
\left(\mathcal{H}_{B_2}-\lambda_\tau ^2\right) u_2=0\quad  &\textrm{in}\; \M ,
\\ 
u_2=\w &\textrm{on}\; \p \M.
\end{array}
\right.
\]
As for $u_1$, we  split $u_2$ into two terms, $u_2=\w+v_2$, where $v_2$ is the solution of the BVP
\[
\left\{ 
\begin{array}{ll} 
\left(\mathcal{H}_{B_2}-\lambda_\tau ^2\right) v_2= -e^{i\lambda_\tau \psi} k_2\quad &\textrm{in}\; \M
\\ 
v_2=0 &\textrm{on}\; \p \M.
\end{array}
\right.
\]
Therefore
\begin{equation}\label{4.18}
u_2=\w-\left(H_{B_2}-\lambda_\tau ^2\right)^{-1}(e^{i\lambda_\tau\psi} k_2)=\w-R_{B_2}(\lambda_\tau^2) (e^{i\lambda_\tau\psi} k_2).
\end{equation}
Since
\begin{equation}\label{4.19}
S_{B_2}(\tau ) =\int_{\partial \M}\para{\partial_\nu u_2+iA_2(\nu)u_2} \overline{\ww}\ds, 
\end{equation}
we obtain, by applying  formula \eqref{2.5}, 
\begin{align}
S_{B_2}(\tau )&=\int_\M\Delta_{A_2}u_2 \overline{\ww} \dv - \int_\M u_2\overline{\Delta_{A_2}\ww} \dv \label{4.20}
\\
&\hskip 2cm+\int_{\p \M}\w(\overline{\p_\nu \ww+iA_2(\nu)\ww}) \ds.\nonumber
\end{align}
On the other hand, by using \eqref{4.2}, \eqref{4.3} and \eqref{4.4}, we find
\begin{equation}\label{4.21}
\Delta_{A_2} \ww=\Delta_{A_2}(e^{i\overline{\lambda}_\tau\psi}\bb_2)=-\overline{\lambda}_\tau^2\ww+e^{i\overline{\lambda}_\tau\psi}\Delta_{A_2} (\bb_2).
\end{equation}
Whence
\begin{align}
\int_\M u_2\overline{\Delta_{A_2}\ww} \dv=\int_\M &(\w-R_{B_2}(\lambda_\tau^2) (e^{i\lambda_\tau\psi} k_2)) \label{4.22}
\\
&\times \para{-\lambda_\tau^2\overline{\ww}+e^{-i\lambda_\tau\psi}\overline{\Delta_{A_2} (\bb_2)}}\dv \nonumber
\end{align}
and
\begin{align*}
\int_\M \Delta_{A_2}&u_2 \overline{\ww} \dv =-\int_\M\h_{B_2}u_2\overline{\ww} \dv+\int_\M q_2u_2\overline{\ww} \dv\cr
&=\int_M \para{\w-R_{B_2}(\lambda_\tau^2) (e^{i\lambda_\tau\psi} k_2)}\para{-\lambda_\tau^2\overline{\ww}+q_2\overline{\ww}}\dv.
\end{align*}
Thus,
\begin{align}
&\int_\M\Delta_{A_2}u_2 \overline{\ww} \dv - \int_\M u_2\overline{\Delta_{A_2}\ww} \dv \label{4.23}
\\
& =\int_\M\para{\w-R_{B_2}(\lambda_\tau^2) (e^{i\lambda_\tau\psi} k_2)}\para{e^{-i\lambda_\tau\psi}\overline{\h_{B_2} (\bb_2)}}\dv\nonumber
\\
&=\int_M\bb_1\overline{\h_{B_2} (\bb_2)}\dv-\int_\M R_{B_2}(\lambda_\tau^2) \para{e^{i\lambda_\tau\psi} k_2}\para{e^{-i\lambda_\tau\psi}\overline{\h_{B_1} (\bb_2)}}\dv\nonumber
\\
&= \int_M\bb_1\overline{\h_{B_2} (\bb_2)}\dv\nonumber
\\
& -\int_\M R_{B_2}(\lambda_\tau^2) \para{e^{i\lambda_\tau\psi} \para{\h_{B_2}\para{\bb_1}-2\lambda_\tau\seq{A,d\psi}\bb_1}}\para{e^{-i\lambda_\tau\psi}\overline{\h_{B_1} (\bb_2)}}\dv.\nonumber
\end{align}
Moreover, we have
\begin{align}
&\int_{\p \M}\w(\overline{\p_\nu \ww+iA_2(\nu)\ww}) \ds\label{4.24}
\\
&\hskip 2cm =\int_{\p\M}\bb_1\para{\p_\nu\overline{\bb_2}-iA_2(\nu)\overline{\bb_2}-i\lambda_\tau\overline{\bb_2}\p_\nu\psi}\ds.\nonumber
\end{align}
Inserting \eqref{4.24} and \eqref{4.23} in \eqref{4.20}, we obtain
\begin{align}
&S_{B_2}(\tau )=\int_{\p\M}\bb_1\para{\p_\nu\overline{\bb_2}-iA_2(\nu)\overline{\bb_2}-i\lambda_\tau\overline{\bb_2}\p_\nu\psi}\ds\label{4.25}
\\
&\hskip 4cm +\int_\M\bb_1\overline{\h_{B_2} (\bb_2)}\dv \nonumber
\\
& -\int_\M R_{B_2}(\lambda_\tau^2) \para{e^{i\lambda_\tau\psi} \para{\h_{B_2}\para{\bb_1}-2\lambda_\tau\seq{A,d\psi}\bb_1}}\para{e^{-i\lambda_\tau\psi}\overline{\h_{B_2} (\bb_2)}}\dv.\nonumber
\end{align}
This completes the proof of the Lemma.
\end{proof}
Subtracting side by side \eqref{4.7} and \eqref{4.8}, and using the fact that $A_1=A_2$ on $\p\M$, we obtain the following identity, that we will use later in the text.
\begin{multline}\label{4.26}
S_{B_1}(\tau)-S_{B_2}(\tau) = -2\lambda_\tau\int_\M\bb_1\overline{\bb_2}\seq{A,d\psi}\dv+\int_\M\bb_1(\overline{\h_{B_1}-\h_{B_2})(\bb_2)}\dv\cr
-\int_\M  R_{B_1}(\lambda_\tau^2) \para{e^{i\lambda_\tau\psi}\h_{B_1}\para{\bb_1}}\para{e^{-i\lambda_\tau\psi}\para{\overline{\h_{B_1} (\bb_2)}-2\lambda_\tau\overline{\bb_2}\seq{A,d\psi}}}\dv\cr
+\int_\M R_{B_2}(\lambda_\tau^2) \para{e^{i\lambda_\tau\psi} \para{\h_{B_2}\para{\bb_1}-2\lambda_\tau\seq{A,d\psi}\bb_1}}\para{e^{-i\lambda_\tau\psi}\overline{\h_{B_2} (\bb_2)}}\dv.
\end{multline}
%%%%%%%%%%%%%%%%%%%%%%%%%%%%%%%%%%%%%%%%%%%%%%%%%%%%
\subsection{Solving the eikonal and transport equations}\ 
%%%%%%%%%%%%%%%%%%%%%%%%%%%%%%%%%%%%%%%%%%%%%%%%%%%%%%

We  construct the phase function $\psi$ solution to the eikonal equation \eqref{4.2} and the amplitudes $\alpha_\ell$ and $\beta_\ell$, $\ell=1,2$, solutions to the transport equations \eqref{4.3}-\eqref{4.4}.

Let $y\in \p \M_1$. Denote points in $\M_1$ by $(r,\theta)$
where $(r,\theta)$ are polar normal coordinates in $\M_1$ with center
$y$. That is, $x=\exp_{y}(r\theta)$, where $r>0$ and
$$
\theta\in S_{y}^+\M_1=\set{\theta\in T_{y}\M_1,\,\,\abs{\theta}=1,\,\,\seq{\theta,\nu}<0}.
$$
In these coordinates (depending on the choice of $y$) the
metric has the form
$$
\widetilde{\g}(r,\theta)=\dd r^2+\g_0(r,\theta).
$$
%where $\g_0(r,\theta)$ is a smooth positive definite metric.
If $u$ is a function in $\M$, set, for $r>0$ and $\theta\in S_y\M_1,$
$$
\widetilde{u}(r,\theta)=u(\exp_{y}(r\theta)),
$$
If $u$ is compactly supported, $\widetilde{u}$ is naturally extended by $0$ outside $\M$.

The geodesic distance  to $y$ provide an explicit solution of the eikonal equation \eqref{4.2}:
\begin{equation}\label{4.27}
\psi(x)=d_\g(x,y).
\end{equation}
Since $y\in \M_1\backslash\overline{\M}$, we have $\psi\in\mathcal{C}^\infty(\M)$ and
\begin{equation}\label{4.28}
\widetilde{\psi}(r,\theta)=r=d_\g(x,y).
\end{equation}

We now solve the transport equation \eqref{4.3}. To this and, recall that
if $f(r)$ is any function of the geodesic distance $r$, then
\begin{equation}\label{4.29}
\Delta_{\widetilde{\g}}f(r)=f''(r)+\frac{\varrho^{-1}}{2}\frac{\p
\varrho}{\p r}f'(r).
\end{equation}
Here $\varrho=\varrho(r,\theta)$ denotes the square of the volume element in geodesic polar coordinates.
In the new coordinates system, equation \eqref{4.3} takes the form
\begin{equation}\label{4.30}
\frac{\p \widetilde{\psi}}{\p
r}\frac{\p \widetilde{\alpha}}{\p
r}+\frac{1}{4}\widetilde{\alpha}\varrho^{-1}\frac{\p \varrho}{\p r}\frac{\p
\widetilde{\psi}}{\p r}=0.
\end{equation}
Thus $\widetilde{\alpha}$ satisfies
\begin{equation}\label{4.31}
\frac{\p \widetilde{\alpha}}{\p
r}+\frac{1}{4}\widetilde{\alpha}\varrho^{-1}\frac{\p \varrho}{\p r}=0.
\end{equation}
For  $\eta\in H^2(S_y^+\M)$, we seek $\widetilde{\alpha}$ in the form
\begin{equation}\label{4.32}
\widetilde{\alpha}(r,\theta)=\varrho^{-1/4}\eta(y,\theta).
\end{equation}
Direct computations yield
\begin{equation}\label{4.33}
\frac{\p \widetilde{\alpha}}{\p
r}(r,\theta)=-\frac{1}{4}\varrho^{-5/4}\frac{\p\varrho}{\p
r}\eta(y,\theta).
\end{equation}
Finally, \eqref{4.32} and \eqref{4.33} entail
\begin{equation}\label{4.34}
\frac{\p \widetilde{\alpha}}{\p
r}(r,\theta)=-\frac{1}{4}\varrho^{-1}\widetilde{\alpha}(r,\theta)\frac{\p\varrho}{\p
r}.
\end{equation}

In the rest of this subsection we are concerned with transport equation \eqref{4.4}. 
Using that, in polar coordinates, $\nabla\psi(x)$ can be expressed in term of $\dot{\gamma}_{y,\theta}(r)$ (see for instance \cite[Appendix C]{Be}), we have 
$$
\seq{\widetilde{A}_\ell(r,y,\theta),d\psi}=\seq{\widetilde{A}_\ell^\sharp(r,y,\theta),\nabla\psi}=\sigma_{A_\ell}(\varphi_r(y,\theta))=\widetilde{\sigma}_{A_\ell}(r,y,\theta).
$$
Consequently, in polar coordinates system, (\ref{4.4}) has the form
\begin{equation}\label{4.35}
\frac{\p \widetilde{\psi}}{\p
r}\frac{\p \widetilde{\beta}}{\p
r} +i\widetilde{\sigma}_{A_\ell}(r,y,\theta)\widetilde{\beta}=0,
\end{equation}
where $\widetilde{\sigma}_{A_\ell} (r,y,\theta):=\sigma_{A_\ell}(\Phi_r(y,\theta))=\seq{\dot{\gamma}_{y,\theta}(r),A_{\ell}^\sharp(\gamma_{y,\theta}(r))}$.
Thus $\widetilde{\beta}$ satisfies
\begin{equation}\label{4.36}
\frac{\p \widetilde{\beta}}{\p
r}+i\widetilde{\sigma}_{A_\ell}(r,y,\theta)\widetilde{\beta}=0.
\end{equation}
Thus, we can choose $\widetilde{\beta}$ defined as follows
$$
\widetilde{\beta}(y,r,\theta)=\exp\para{i\int_0^{\ell_+(y,\theta)}\widetilde{\sigma}_{A_\ell}(r+s,y,\theta)ds}.
$$
On the other words, we solved (\ref{3.4}).

In the remainder of this paper we  use the following notations: 
\begin{equation}\label{4.37}
\widetilde{\beta}_{A_\ell}(y,r,\theta)=\exp\para{i\int_0^{\ell_+(y,\theta)}\widetilde{\sigma}_{A_\ell}(r+s,y,\theta)ds}, \quad \ell=1,2,
\end{equation}
and
\begin{equation}\label{4.38}
\widetilde{\alpha}_1(r,\theta)=\varrho^{-1/4}\eta(y,\theta),\quad \widetilde{\alpha}_2(r,\theta)=\varrho^{-1/4}.
\end{equation}
%%%%%%%%%%%%%%%%%%%%%%%%%%%%%%%%%%%%%%%%%%%%%%%%%%%%%%%%%%%%%%%%%%%%%%%%%%%%
%%%%%%%%%%%%%%%%%%%%%%%%%
\subsection{Asymptotic behavior of the boundary representation formula}
%%%%%%%%%%%%%%%%%%%%%%%%%%%
%The main purpose of the following lemmas  is to present a preliminary estimate, which relates the difference of the potentials to the boundary operators $S_{B_1}(\tau)-S_{B_2}(\tau)$. 
We discuss in this subsection the asymptotic behavior of $S_{B_1}(\tau)-S_{B_2}(\tau)$, as well as the asymptotic behavior of $[S_{B_1}(\tau)-S_{B_2}(\tau)]/\tau$, as $\tau\to\infty$.

As before, $B_\ell=(A_\ell,q_\ell)\in\mathscr{B}_r$, $\ell=1,2$ are so that  $A_\ell$ satisfy \eqref{t3e}. Set
$$
A(x)=(A_1-A_2)(x),\quad q(x)=(q_1-q_2)(x).
$$
Note that $A$, extended by $0$ outside $\M$, belongs to $\mathcal{C}^0(\M_{1},T^*\M_1)$. We also extend $q$ by $0$ outside $\M$. This extension, still denoted by $q$, is an element of $L^\infty (\M_1)$. 
%%%
\begin{lemma}\label{L.4.2}  For any $\eta\in H^2(S^+_y\M_1)$, we have 
\begin{equation}\label{4.39}
\lim_{\tau\to+\infty}\frac{S_{B_1}(\tau)-S_{B_2}(\tau)}{ \tau}=2i\int_{S_y^+\M_1}\para{e^{i\I_1A(y,\theta)}-1}\eta(y,\theta) \, \dd\omega_y(\theta).
\end{equation}
\end{lemma}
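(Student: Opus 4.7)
The plan is to start from \eqref{4.26}, divide by $\tau$, and identify the surviving contribution as $\tau\to+\infty$. Of the four terms on the right-hand side, the second is $\tau$-independent and bounded, so it vanishes at once, while the first is $-2\lambda_\tau\int_\M\bb_1\overline{\bb_2}\seq{A,d\psi}\dv$ and, since $\lambda_\tau/\tau\to 1$, contributes $-2\int_\M\bb_1\overline{\bb_2}\seq{A,d\psi}\dv$ in the limit. Everything then reduces to showing that the two resolvent terms on the last two lines of \eqref{4.26} are $O(1)$, hence $o(\tau)$.

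For the resolvent bounds I would use \eqref{3.1} with $s=0$: since $\lambda_\tau=\tau+i$, one has $\Im(\lambda_\tau^2)=2\tau$, so $\|R_{B_\ell}(\lambda_\tau^2)\|_{\mathscr{L}(L^2(\M))}=O(1/\tau)$. Because $\psi=d_\g(\cdot,y)$ is bounded on $\M$, the phases $e^{\pm i\lambda_\tau\psi}$ are uniformly bounded in $\tau$, while $\bb_\ell$ and $\h_{B_\ell}(\bb_\ell)$ lie in $L^2(\M)$ with $\tau$-independent norm. In the third term of \eqref{4.26} the argument of $R_{B_1}(\lambda_\tau^2)$ is bounded in $L^2$ and the companion factor carries one explicit $\lambda_\tau$, whence an $L^2$ norm $O(\tau)$, so the pairing is $O(1/\tau)\cdot O(\tau)=O(1)$; the fourth term is handled symmetrically, the $\lambda_\tau$ now sitting inside the resolvent argument, giving $O(\tau)\cdot O(1/\tau)\cdot O(1)=O(1)$. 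Dividing by $\tau$ produces $o(1)$ in both cases, and we obtain
\[
\lim_{\tau\to+\infty}\frac{S_{B_1}(\tau)-S_{B_2}(\tau)}{\tau}=-2\int_\M\bb_1\overline{\bb_2}\seq{A,d\psi}\dv.
\]

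To evaluate this integral I would switch to geodesic polar coordinates $(r,\theta)$ centered at $y\in\p\M_1$. There $\psi=r$ and $\seq{A,d\psi}=\widetilde\sigma_A(r,y,\theta)$; using $\widetilde\alpha_1\overline{\widetilde\alpha_2}=\varrho^{-1/2}\eta(y,\theta)$ from \eqref{4.38} and the fact that the Riemannian volume element reads $\varrho^{1/2}\dd r\,\dd\omega_y(\theta)$, the two factors of $\varrho^{\pm 1/2}$ cancel and the integral reduces to
\[
-2\int_{S_y^+\M_1}\eta(y,\theta)\int_0^{\ell_+(y,\theta)}\widetilde\beta_{A_1}\overline{\widetilde\beta_{A_2}}(r,\theta)\,\widetilde\sigma_A(r,y,\theta)\dd r\,\dd\omega_y(\theta),
\]
the $r$-range being $[0,\ell_+(y,\theta)]$ because $A=A_1-A_2$ vanishes outside $\M$. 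From \eqref{4.37} and the reality of $\widetilde\sigma_{A_\ell}$, one has $\widetilde\beta_{A_1}\overline{\widetilde\beta_{A_2}}(r,\theta)=e^{F(r)}$ with $F(r)=i\int_r^{r+\ell_+}\widetilde\sigma_A(u,y,\theta)\dd u$; since $\widetilde\sigma_A$ vanishes past the exit point of $\M$, we get $F'(r)=-i\widetilde\sigma_A(r)$ and $F(\ell_+)=0$, so the radial integrand is the exact derivative $i(e^F)'$. Integrating and using $F(0)=i\I_1 A(y,\theta)$ yields $\int_0^{\ell_+}\widetilde\sigma_A\widetilde\beta_{A_1}\overline{\widetilde\beta_{A_2}}\dd r=-i(e^{i\I_1 A(y,\theta)}-1)$, which after multiplication by $-2$ and integration in $\theta$ reproduces exactly \eqref{4.39}. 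The main technical point in this scheme is the balance of $\tau$-powers in the two resolvent terms: the $O(1/\tau)$ decay supplied by \eqref{3.1} is precisely what is needed to absorb the single explicit factor of $\lambda_\tau$ in each of them, keeping those contributions bounded in $\tau$ rather than growing.
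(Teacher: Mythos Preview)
Your proof is correct and follows essentially the same route as the paper: divide \eqref{4.26} by $\tau$, kill the resolvent terms via the $O(1/\tau)$ bound coming from $\Im(\lambda_\tau^2)=2\tau$, and compute the surviving integral in geodesic polar coordinates by recognizing the radial integrand as an exact derivative. Your sign bookkeeping is in fact cleaner than the paper's (which has a couple of harmless sign slips between \eqref{4.26}, \eqref{4.41} and \eqref{4.43}), and you correctly use the boundedness of $e^{\pm i\lambda_\tau\psi}$ (both signs) together with the $\tau$-independence of $\bb_\ell$ to balance the single factor of $\lambda_\tau$ against the $1/\tau$ resolvent decay.
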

\begin{proof}
By the resolvent estimate, we have
\begin{equation}\label{4.40}
\norm{R_{B_\ell}(\lambda_\tau ^2)}_{\mathscr{L}(L^2(\M))}\leq \frac{1}{|\Im(\lambda_\tau^2)|}=\frac{1}{2\tau},\quad  \ell=1,2.
\end{equation}
Inequalities \eqref{4.40}  and \eqref{4.26} yield in a straightforward manner
\begin{align}
\lim_{\tau\to+\infty}\frac{S_{B_1}(\tau)-S_{B_2}(\tau)}{ \tau}&=2\int_\M\bb_1\overline{\bb_2}\seq{A,d\psi}\dv\label{4.41}
\\
&=2\int_\M\alpha_1\alpha_2\beta_{A_1}\overline{\beta_{A_2}}\seq{A,d\psi}\dv .\nonumber
\end{align}
Applying \eqref{eqi} and making the change variable $x=\exp_{y}(r\theta)$, with $r>0$ and
$\theta\in S_{y}\M_1$, we get
\begin{multline}\label{4.42}
2\int_\M \seq{A,d\psi} (\alpha_1\alpha_2)(x)(\beta_{A_1}\overline{\beta}_{A_2})(x)  \dv \cr
=2\int_{S^+_{y}\M_1}\!\int_0^{\ell_+(y,\theta)}\widetilde{\sigma}_A(r,y,\theta)(\widetilde{\alpha}_1\widetilde{\alpha}_2)(r,\theta)(\widetilde{\beta}_{A_1}\overline{\widetilde{\beta}}_{A_2})(r,\theta)
\varrho^{1/2} \, \dd r \, \dd\omega_y(\theta) \cr
=2\int_{S^+_{y}\M_1}\!\int_0^{\ell_+(y,\theta)} \widetilde{\sigma}_A(r,y,\theta)\widetilde{\beta}_{A_1}(r,\theta)\overline{\widetilde{\beta}}_{A_2}(r,\theta)\eta(y,\theta) \dd r \,
\dd\omega_y(\theta)\cr
=\int_{S_y^+\M_1}\int_0^{\ell_+(y,\theta)}  \widetilde{\sigma}_A(r,y,\theta) \exp\para{i\int_0^{\ell_+(y,\theta)} \widetilde{\sigma}_A(r+s,y,\theta) ds}\eta(y,\theta)
\dd r \,
\dd\omega_y(\theta).
\end{multline}
Also
\begin{multline}\label{4.43}
\int_0^{\ell_+(y,\theta)} \widetilde{\sigma}_A(r,y,\theta) \exp\para{i\int_0^{\ell_+(y,\theta)} \widetilde{\sigma}_A(r+s,y,\theta) ds}
\dd r \cr
=-i\int_0^{\ell_+(y,\theta)} \p_r\cro{\exp\para{i\int_0^{\ell_+(y,\theta)} \widetilde{\sigma}_A(r+s,y,\theta) ds}}\dd r\cr
=i\cro{\exp\para{i\int_0^{\ell_+(y,\theta)} \widetilde{\sigma}_A(s,y,\theta) ds}-1},
\end{multline}
entailing
\begin{align*}%\label{4.44}
2\int_\M \seq{A,d\psi} (\alpha_1\alpha_2)(x)&(\beta_{A_1}\overline{\beta}_{A_2})(x)  \dv
\\
&=2i\int_{S_y^+\M_1}\para{\exp\para{i\I_1A(y,\theta)}-1}\eta(y,\theta)\dd\omega_y(\theta).
\end{align*}
This in \eqref{4.41} gives the expected inequality. 
\end{proof}
%%%%%%%%%%%%%%%%%%
\begin{lemma}\label{L.4.3} 
Assume  that $A_1=A_2$. Then, for any $\eta\in H^2(S^+_y\M_1)$, we have
\begin{equation}\label{4.45}
\lim_{\tau\to+\infty}\para{S_{B_1}(\tau)-S_{B_2}(\tau)}=\int_{S_y^+\M_1}\I_0(q)(y,\theta)\eta(y,\theta) \dd\omega_y(\theta).
\end{equation}
\end{lemma}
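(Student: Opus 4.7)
The starting point is identity \eqref{4.26}. Since $A_1=A_2$, we have $A=0$, which immediately kills the first term on the right-hand side. Moreover, with $A=0$ the magnetic and ``codifferential + $|A|^2$'' parts of $\h_{B_1}$ and $\h_{B_2}$ coincide, so $\h_{B_1}-\h_{B_2}$ acts as multiplication by the real potential $q=q_1-q_2$. Thus \eqref{4.26} reduces to
\begin{equation*}
S_{B_1}(\tau)-S_{B_2}(\tau)=\int_\M \bb_1\,\overline{\bb_2}\,q\,\dv+\mathcal{R}_1(\tau)+\mathcal{R}_2(\tau),
\end{equation*}
where $\mathcal{R}_\ell(\tau)$ denotes the $\ell$-th resolvent term in \eqref{4.26} evaluated at $A=0$.

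Next I would show that $\mathcal{R}_\ell(\tau)\to 0$ as $\tau\to+\infty$. The resolvent bound \eqref{4.40} gives $\|R_{B_\ell}(\lambda_\tau^2)\|_{\mathscr{L}(L^2(\M))}\le (2\tau)^{-1}$. Since $\lambda_\tau=\tau+i$, the weights $|e^{\pm i\lambda_\tau\psi}|=e^{\mp\psi}$ are uniformly bounded on $\M$ (as $\psi=d_\g(\cdot,y)$ is bounded), and $\bb_1$, $\bb_2$, together with $\h_{B_1}(\bb_1)$ and $\h_{B_1}(\bb_2)$, $\h_{B_2}(\bb_1)$, $\h_{B_2}(\bb_2)$ are all in $L^2(\M)$ independently of $\tau$. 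By Cauchy--Schwarz each $\mathcal{R}_\ell(\tau)$ is therefore $O(1/\tau)$.

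It remains to identify the surviving term $\int_\M \bb_1\overline{\bb_2}\,q\,\dv$. Because $A_1=A_2$, the transport equation \eqref{4.4} gives $\beta_{A_1}=\beta_{A_2}$, and by \eqref{4.37} these factors have modulus one, so $\beta_{A_1}\overline{\beta_{A_2}}\equiv 1$. Hence
\begin{equation*}
\int_\M \bb_1\,\overline{\bb_2}\,q\,\dv=\int_\M \alpha_1\alpha_2\,q\,\dv.
\end{equation*}
I now switch to polar normal coordinates $x=\exp_y(r\theta)$ centered at $y\in\p\M_1$. With $\dv=\varrho^{1/2}\,\dd r\,\dd\omega_y(\theta)$ and, by \eqref{4.38}, $\widetilde{\alpha}_1\widetilde{\alpha}_2=\varrho^{-1/2}\eta(y,\theta)$, the Jacobian cancels exactly and one obtains
\begin{equation*}
\int_\M \alpha_1\alpha_2\,q\,\dv=\int_{S_y^+\M_1}\eta(y,\theta)\int_0^{\ell_+(y,\theta)}\!q(\gamma_{y,\theta}(r))\,\dd r\,\dd\omega_y(\theta),
\end{equation*}
which, by \eqref{2.18}, is exactly $\int_{S_y^+\M_1}\I_0(q)(y,\theta)\,\eta(y,\theta)\,\dd\omega_y(\theta)$, yielding \eqref{4.45}. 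The only slightly delicate point is the bookkeeping for $\mathcal{R}_1,\mathcal{R}_2$, since one must check that the multiplier paired against $R_{B_\ell}(\lambda_\tau^2)(\cdot)$ has $L^2$-norm uniformly bounded in $\tau$; but this is guaranteed by the explicit $\tau$-independent form of $\bb_\ell$ and the boundedness of $\psi$.
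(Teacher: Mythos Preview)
Your proof is correct and follows essentially the same route as the paper: specialize \eqref{4.26} to $A=0$, kill the resolvent terms via the bound \eqref{4.40}, observe that $\beta_{A_1}\overline{\beta_{A_2}}=|\beta_{A_1}|^2=1$ so that only $\int_\M q\,\alpha_1\alpha_2\,\dv$ survives, and then pass to polar normal coordinates so that the Jacobian $\varrho^{1/2}$ cancels against $\widetilde{\alpha}_1\widetilde{\alpha}_2=\varrho^{-1/2}\eta$, leaving $\int_{S_y^+\M_1}\I_0(q)\,\eta\,\dd\omega_y$. The extra bookkeeping you spell out (boundedness of $e^{\pm\psi}$ and $\tau$-independence of $\h_{B_\ell}\bb_j$) is exactly what the paper tacitly uses.
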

%%%%%%%%%%%%%%
\begin{proof} 
Since $A_1=A_2$,  \eqref{4.26} is reduced to the following formula
\begin{align}
S_{B_1}(\tau)&-S_{B_2}(\tau) =\int_\M q(x)\bb_1(x)\overline{\bb_2}(x)\dv \label{4.46}
\\
&-\int_\M R_{B_1}(\lambda_\tau^2) \para{e^{i\lambda_\tau\psi}\h_{B_1}\para{\bb_1}}\para{e^{-i\lambda_\tau\psi}\overline{\h_{B_1} (\bb_2)}}\dv\nonumber
\\
&\qquad +\int_\M R_{B_2}(\lambda_\tau^2) \para{e^{i\lambda_\tau\psi} \h_{B_2}(\bb_1)}\para{e^{-i\lambda_\tau\psi}\overline{\h_{B_2} (\bb_2)}}\dv.\nonumber
\end{align}
Once again the resolvent estimate enables us to get
\begin{equation}\label{4.47}
\lim_{\tau\to+\infty}\para{S_{B_1}(\tau)-S_{B_2}(\tau)}=\int_{\M} q(x)(\alpha_1\alpha_2)(x)\dv.
\end{equation}
We complete the proof by mimicking the end of the previous proof in order to obtain
\begin{equation}\label{4.48}
\int_{\M} q(x)(\alpha_1\alpha_2)(x)\dv=\int_{S_y^+\M_1}\I_0(q)(y,\theta)\eta(y,\theta)\dd\omega_y(\theta).
\end{equation}
This completes the proof.
\end{proof}
%%%%%%%%%%%%%%%%%%%%%%%%%%%%%%%%%%%%%%%%%%%%%%%
%%%%%%%%%%%%%%%%%%%%%%%%%%%%%%%%%%%%%%%%%%%%%%%%%%
\section{Proof of the main results}
\setcounter{equation}{0}
%%%%%%%%%%%%%%%%%%%%%%%%%%%%%%%%%%%%%%%%%%%%%%%%%%%
%%%%%%%%%%%%%%%%%%%%%%%%%%%%%%%%%%%%%%%%%%%%%%%%%%%%

\subsection{Asymptotic behavior of the spectral data}
Prior to the completion of the proof of Theorems \ref{t1} and \ref{t2}, we establish some technicals lemmas. Assumptions and notations are the same as in the preceding one.

\begin{lemma}\label{L.5.1} 
%Let $B_\ell=(A_\ell,q_\ell)\in \mathscr{B}_r$, $\ell=1,2$, and let $\varphi_{\ell,\tau}^*$ given by \eqref{4.5}. 
For $t\in [0,1/2)$ and $\ell=1,2$, we have 
\begin{equation}\label{5.1}
\sum_{k\ge 1} k^{2t/n}\Abs{\frac{\seq{\varphi_{1,\tau}^*,\psi_{\ell,k}}}{\lambda_{\ell,k}-\lambda_\tau^2 }}^2\le C_\ell\tau^{2t}\norm{\eta}^2_{H^2(S_y^+\M_1)}
\end{equation}
and 
\begin{equation}\label{5.2}
\sum_{k\ge 1}k^{2t/n} \Abs{\frac{\seq{\ww,\psi_{2,k}}}{\lambda_{\ell,k}-\lambda_\tau^2 }}^2\leq C_\ell \tau^{2t},  
\end{equation}
the constant $C_\ell$ depends on $t$, $\M$, $r$ and $B_\ell$ if $t>0$,  and  it is independent on $B_\ell$ when $t=0$.
\end{lemma}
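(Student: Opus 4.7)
The key observation is that the ratios in \eqref{5.1} and \eqref{5.2} are, up to sign, the Fourier coefficients of explicit Dirichlet solutions. By Lemma \ref{L.3.1}, if $u^{(j)}_\ell \in H^2(\M)$ denotes the unique solution of $(\mathcal{H}_{B_\ell} - \lambda_\tau^2) u = 0$ in $\M$ with $u = \varphi_{j,\tau}^*|_{\p\M}$ on $\p\M$, then $(u^{(j)}_\ell,\phi_{\ell,k})_{L^2(\M)} = -\seq{\varphi_{j,\tau}^*,\psi_{\ell,k}}/(\lambda_{\ell,k}-\lambda_\tau^2)$. Hence \eqref{5.1} (take $j=1$) and \eqref{5.2} in the matching case $j=\ell=2$ amount to bounds of the form $\sum_k k^{2t/n}|(u^{(j)}_\ell,\phi_{\ell,k})|^2 \leq C\tau^{2t}$ on weighted spectral coefficients of these Dirichlet solutions, which I would reduce to an $H^t(\M)$-bound on $u^{(j)}_\ell$.

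The plan is to decompose $u^{(j)}_\ell = \varphi_{j,\tau}^* + w^{(j)}_\ell$, where $w^{(j)}_\ell \in H^2(\M) \cap H_0^1(\M)$ satisfies $(\mathcal{H}_{B_\ell} - \lambda_\tau^2) w^{(j)}_\ell = -e^{i\lambda_\tau \psi}K^{(j)}_\ell$. Thanks to the eikonal equation \eqref{4.2} and the transport equations \eqref{4.3}--\eqref{4.4}, the $O(\tau^2)$ contributions in this source cancel identically; the $O(\tau)$ term vanishes when $\ell=j$ and is otherwise retained as $-2\lambda_\tau \seq{A_j-A_\ell,d\psi}\beta^*_j$ (mirroring the derivations of \eqref{4.10} and \eqref{4.17}), so in all cases $\norm{K^{(j)}_\ell}_{L^2(\M)} \leq C(1+\tau)\norm{\beta^*_j}_{H^2(\M)}$. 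The resolvent estimate \eqref{3.1} applied with $s=t/2$, together with $|\Im\lambda_\tau^2|=2\tau$, then yields $\norm{w^{(j)}_\ell}_{H^t(\M)} \leq C\tau^{-(1-t/2)}\norm{K^{(j)}_\ell}_{L^2} \leq C\tau^{t/2}\norm{\beta^*_j}_{H^2(\M)}$. For the lift, direct differentiation of $\varphi_{j,\tau}^* = e^{i\lambda_\tau\psi}\beta^*_j$ gives $\norm{\varphi_{j,\tau}^*}_{L^2} \leq \norm{\beta^*_j}_{L^2}$ and $\norm{\varphi_{j,\tau}^*}_{H^1} \leq C\tau\norm{\beta^*_j}_{H^1}$, so by interpolation $\norm{\varphi_{j,\tau}^*}_{H^t(\M)} \leq C\tau^t\norm{\beta^*_j}_{H^t(\M)}$. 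Combining these estimates, $\norm{u^{(j)}_\ell}_{H^t(\M)} \leq C\tau^t\norm{\beta^*_j}_{H^2(\M)}$; for $j=1$ this is $\leq C\tau^t\norm{\eta}_{H^2(S_y^+\M_1)}$ since $\alpha_1=\varrho^{-1/4}\eta$, while for $j=2$ it is $\leq C\tau^t$ with $C$ depending only on $r$ and $\M$.

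Finally, to convert the Sobolev bound into the weighted spectral sum, I would appeal to the uniform Weyl asymptotics proved in Appendix A to obtain $\lambda_{\ell,k} \geq ck^{2/n} - C$ uniformly over $B_\ell \in \mathscr{B}_r$, so that $k^{2t/n} \leq C'(1+|\lambda_{\ell,k}|)^t$ and therefore
\[
\sum_k k^{2t/n}|(u,\phi_{\ell,k})|^2 \leq C\norm{(H_{B_\ell}+M)^{t/2}u}_{L^2(\M)}^2
\]
for a sufficiently large fixed shift $M$. For $t/2 \in [0,1/4)$, interpolation between $L^2(\M)$ and $\mathscr{D}(H_{B_\ell}) \subset H^2(\M)\cap H_0^1(\M)$ identifies the right-hand side, up to constants uniform in $B_\ell \in \mathscr{B}_r$, with $\norm{u}_{H^t(\M)}^2$, giving the claimed $\tau^{2t}$-bound. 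In the special case $t=0$ no Weyl or fractional-power machinery is needed: the sum is simply $\norm{u^{(j)}_\ell}_{L^2(\M)}^2$, and the universal resolvent bound $\norm{R_{B_\ell}(\lambda_\tau^2)}_{\mathcal{L}(L^2(\M))} \leq 1/(2\tau)$ yields a constant independent of $B_\ell$, as asserted.

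The main obstacle I expect is the uniform identification $\mathscr{D}((H_{B_\ell}+M)^{t/2})\simeq H^t(\M)$ for $t\in[0,1/2)$ with interpolation constants controlled only by $r$ and $\M$: the identification itself in this sub-$H^{1/2}$ range is standard for a fixed operator, but its uniformity over $\mathscr{B}_r$ rests on the uniformity of the Weyl-type estimates of Appendix A and on the equivalence (up to an additive shift bounded in terms of $r$) of the quadratic form of $H_{B_\ell}$ with the $H^1$-form.
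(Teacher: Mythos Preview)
Your proof is correct and follows essentially the same route as the paper: identify the ratios as the eigenfunction coefficients of the Dirichlet solution $u^{(j)}_\ell$ via Lemma~\ref{L.3.1}, split $u^{(j)}_\ell=\varphi_{j,\tau}^*+R_{B_\ell}(\lambda_\tau^2)(\cdot)$ using \eqref{4.10}--\eqref{4.17}, bound the two pieces in $H^t(\M)$ by interpolation and the resolvent estimate \eqref{3.1}, and then pass from $(1+|\lambda_{\ell,k}|)^t$ to $k^{2t/n}$ via the Weyl bound \eqref{A1}. The only cosmetic difference is that the paper interpolates between $L^2$ and the form domain $\mathscr D((H_{B_\ell}+\mu)^{1/2})=H^1_0(\M)$ (invoking \cite[Chap.~1]{LM1} for $H^t_0=H^t$ when $t<1/2$) rather than between $L^2$ and the operator domain; both identifications give the same $H^t$ space here, and the paper, like you, allows the resulting constant to depend on $B_\ell$ when $t>0$.
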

%%%%
\begin{proof}  By Lemma \ref{L.3.1} the solution of the boundary value problem \eqref{3.2}, with $f=\w$, $\lambda =\lambda_\tau$ and $B=B_1$, is given by the series
\begin{equation}\label{5.3}
u_{1}(\lambda_\tau )=\sum_{k\ge 1} \frac{\seq{\w,\psi_{1,k}}}{\lambda_\tau^2-\lambda_{1,k}}\phi_{1,k}.
\end{equation}
If $\mu =2r+1$, then the operator $H_{B_\ell}+\mu$ is positive. Indeed, for $u\in H^1_0(\M)$, we have
\begin{align*}
\int_\M \para{H_{B_\ell}+\mu}u\overline{u}\dv &=\int_\M\abs{\nabla_{A_\ell} u}^2\dv+\int_\M(q_\ell+\mu)\abs{u}^2\dv\cr
&\geq \int_\M\abs{d u}^2\dv+(\mu-\norm{q_\ell}_\infty-2\norm{A_\ell}_\infty)\int_\M\abs{u}^2\dv.
\end{align*}
Since $\mathscr{D}((H_{B_\ell}+\mu)^{\frac{1}{2}})=H^1_0(\M)$ we have, by interpolation, $\mathscr{D}((H_{B_\ell}+\mu )^{\frac{t}{2}})=H^{t}_0(\M)=H^{t}(\M)$ (e.g. \cite[Chapter 1, Theorems 11.1 and 11.6]{LM1}). Whence, for $w\in H^{t}(\M)$, we have
\begin{equation}\label{5.4} 
\sum_{k\ge 1}(1+|\lambda_{\ell,k}|)^{t}|(w,\phi_{\ell,k})|^2\le C_\ell\norm{w}_{H^{t}(\M)}^2,\quad \ell=1,2,
\end{equation}
the constant $C_\ell$  only depends on $t$, $r$ and $\M$ and $B_\ell$. 

On the other hand, we get from \eqref{4.11}
\begin{align}
\norm{u_{1}(\lambda_\tau )}_{H^t(\M)}&\le \norm{\w}_{H^t(\M)}+\norm{R_{B_1}(\lambda_\tau^2)(e^{i\lambda_\tau\psi} \mathcal{H}_{B_1}\bb_1)}_{H^{t}(\M)}\label{5.5}
\\
&\leq C\tau^{t}\norm{\eta}_{H^2(S_y^+\M_1)}.\nonumber
\end{align}
Here again the constant $C$ only depends on $t$, $r$, $\M$ and $B_1$, where we used that $\exp_y^{-1}(M)\subset \{r\theta:\ r>0,\ \theta\in S_y^+(M_1)\}$ in order to restrict the norm of $\eta$ to $S_y^+\M_1$.\\
This estimate and \eqref{5.4} with $w=u_{1}(\lambda_\tau )$ and $\ell=1$ entail
\begin{equation}\label{5.6}
\sum_{k\ge 1}(1+|\lambda_{1,k}|)^{t}|(u_{1}(\lambda_\tau ),\phi_{1,k})|^2\le C_1\tau^{2t}\norm{\eta}^2_{H^2(S_y^+\M_1)}.
\end{equation}
We get the first estimate \eqref{5.1} for $\ell=1$, by using  \eqref{A1} in Appendix \ref{appendix} and the identity
\begin{equation}\label{5.7}
(u_{1}(\lambda_\tau ),\phi_{1,k})=\frac{\seq{\w,\psi_{1,k}}}{\lambda_\tau^2-\lambda_{1,k}}.
\end{equation}
To prove the first inequality \eqref{5.1} for $\ell=2$, we consider $u_{2}(\lambda_\tau )$, the solution of the BVP \eqref{3.2} when $\lambda=\lambda_\tau$, $f=\w$ and $B=B_2$. By Lemma \ref{L.3.1}, this solution is given by the series
\begin{equation}\label{5.8}
u_{2}(\lambda_\tau )=\sum_{k\ge 1} \frac{\seq{\w,\psi_{2,k}}}{\lambda_\tau^2-\lambda_{2,k}}\phi_{2,k}.
\end{equation}
On the other hand, we get from \eqref{4.18} and \eqref{3.1}
\begin{align}
\norm{u_{2}(\lambda_\tau )}_{H^t(\M)} &\le  \norm{\w}_{H^t(\M)}\label{5.9}
\\
&+\norm{R_{B_2}(\lambda_\tau^2)(e^{i\lambda_\tau\psi} \para{\mathcal{H}_{B_2}(\bb_1)-2\lambda_\tau\seq{A,d\psi}\bb_2}}_{H^{t}(\M)}\nonumber
\\
&\leq  C\para{\tau^{t}+\frac{\abs{\lambda_\tau}}{\tau^{1-t}}}\norm{\eta}_{H^2(S_y^+\M_1)}\leq  C\tau^t \norm{\eta}_{H^2(S_y^+\M_1)}.\nonumber
\end{align}
Applying again  \eqref{5.4} with $w=u_{2}(\lambda_\tau )$ and $\ell=2$ entail
\begin{equation}\label{5.10}
\sum_{k\ge 1}(1+|\lambda_{2,k}|)^{t}|(u_{2}(\lambda_\tau ),\phi_{2,k})|^2\le C_2\tau^{2t}\norm{\eta}^2_{H^2(S_y^+\M_1)}.
\end{equation}
Since
\begin{equation}\label{5.11}
(u_{2}(\lambda_\tau ),\phi_{2,k})=\frac{\seq{\w,\psi_{2,k}}}{\lambda_\tau^2-\lambda_{2,k}}.
\end{equation}
we obtain \eqref{5.1} with $\ell=2$.\\
The second inequality of \eqref{5.2} is proved similarly.
\end{proof}
%%%%%%%%%%%%%%%%%%%%%%%%%%%%%%%%%%
Let us recall some notations that we introduced in Section 3. For $f\in H^{3/2}(\p \M)$ fixed  and  $\lambda, \mu\in  \rho (H_{B_1})\cap \rho (H_{B_2})$, if $u_{\ell}(\lambda )$ (resp. $u_{\ell}(\mu )$) is the solution of the boundary value problem \eqref{3.2}  for $B=B_\ell$ (resp. $B=B_\ell$ and $\lambda=\mu$), $\ell=1,2$, we have posed
\begin{align}\label{5.12}
&w_{\ell}(\lambda ,\mu )=u_{\ell}(\lambda )-u_{\ell}(\mu ),\cr
& w_{1,2}(\mu )=u_{1}(\mu )-u_{2}(\mu ).
\end{align}
Let
\begin{equation}\label{5.13}
\mathcal{K}(\tau ,\mu ,f)=\para{\p_\nu+iA_1(\nu)}w_{1}(\lambda_\tau ,\mu)-\para{\p_\nu+iA_2(\nu)}w_{2}(\lambda_\tau ,\mu)\quad \textrm{on}\;\p \M.
\end{equation}
Then, by \eqref{3.18}, we obtain
\begin{equation}\label{5.14}
\mathcal{K}(\tau,\mu,f) =\sum_{k \ge 1} \left[\frac{(\mu - \lambda_\tau ^2)\seq{f,\psi_{1,k}}}{(\lambda_\tau ^2 - \lambda_{1,k}) (\mu - \lambda_{1,k})} \psi_{1,k}-\frac{(\tau -\lambda_\tau^2)\seq{f,\psi_{2,k}}}{(\lambda_\tau^2 - \lambda_{2,k})(\mu - \lambda_{2,k})} \psi_{2,k}\right].
\end{equation}
We define
\begin{equation}\label{5.15}
\mathcal{L}(\tau,\mu )=\seq{\mathcal{K}(\tau,\mu, \w), \ww}.
\end{equation}
From \eqref{5.14}, we get
\begin{equation}\label{5.16}
\mathcal{L}(\tau,\mu )=\sum_{k \ge 1}(\mu - \lambda_\tau ^2)\left[\frac{\seq{\w,\psi_{1,k}}\seq{\psi_{1,k},\ww}}{ (\lambda_\tau^2 - \lambda_{1,k})(\mu - \lambda_{1,k})} \, -\frac{\seq{\w,\psi_{2,k}}\seq{\psi_{2,k},\ww}}{ (\lambda_\tau^2 - \lambda_{2,k})(\mu - \lambda_{2,k})} \right]. 
\end{equation}

Define
\begin{equation}\label{5.18}
\mathcal{L}^*(\tau )=\sum_{k\ge 1}\mathcal{L}^*_{1,k}(\tau)+\sum_{k\ge 1}\mathcal{L}^*_{2,k}(\tau)+\sum_{k\ge1}\mathcal{L}^*_{3,k}(\tau),
\end{equation}
with
$$
\mathcal{L}^*_{1,k}(\tau)=\frac{\left\langle \w,\psi_{1,k}-\psi_{2,k} \right\rangle \seq{ \psi_{1,k},\ww}}{ \lambda_\tau^2 - \lambda_{1,k}}
$$
$$
\mathcal{L}^*_{2,k}(\tau)=\frac{\left\langle \w,\psi_{2,k} \right\rangle \seq{ \psi_{1,k}-\psi_{2,k},\ww}}{ \lambda_\tau^2 - \lambda_{1,k}},
$$
$$
\mathcal{L}^*_{3,k}(\tau)=\left\langle \w,\psi_{2,k} \right\rangle \seq{\psi_{2,k},\ww}\left(\frac{1}{(\lambda_\tau^2 - \lambda_{1,k})} -\frac{1}{ (\lambda_\tau^2 - \lambda_{2,k})}\right).
$$
%%%%%%%%%%%%%%%%%%%%%%%%%%%
\begin{lemma}\label{L.5.2} Under assumption \eqref{1.16}, $\mathcal{L}(\tau,\mu )$ converge to $\mathcal{L}^*(\tau )$ as $\mu\to-\infty$ and, for $t\in[0,1/2)$,  we have
\begin{equation}\label{5.17} 
\limsup_{\tau\to \infty} \tau^{-t}|\mathcal{L}^*(\tau )|\leq C\norm{\eta}_{H^2(S_y^+\M_1)}\limsup_{k\to \infty}k^{-t/n}|\lambda_{1,k}-\lambda_{2,k}| .
\end{equation}
 \end{lemma}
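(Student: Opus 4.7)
The plan is the following. First, to obtain $\mathcal{L}(\tau,\mu)\to\mathcal{L}^*(\tau)$ as $\mu\to -\infty$, I would start from \eqref{5.16} and apply the telescoping identity
\[
\frac{AB}{X\,X'} - \frac{CD}{Y\,Y'} = \frac{(A-C)B}{X\,X'} + \frac{C(B-D)}{X\,X'} + CD\!\left(\frac{1}{X\,X'}-\frac{1}{Y\,Y'}\right)
\]
with $(A,B,C,D)=(\langle\w,\psi_{1,k}\rangle,\langle\psi_{1,k},\ww\rangle,\langle\w,\psi_{2,k}\rangle,\langle\psi_{2,k},\ww\rangle)$ and $(X,X',Y,Y')=(\lambda_\tau^2-\lambda_{1,k},\,\mu-\lambda_{1,k},\,\lambda_\tau^2-\lambda_{2,k},\,\mu-\lambda_{2,k})$. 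Multiplying through by $(\mu-\lambda_\tau^2)$ and exploiting that $(\mu-\lambda_\tau^2)/(\mu-\lambda_{\ell,k})\to 1$ as $\mu\to-\infty$ for each fixed $k$, the three summands converge pointwise to $\mathcal{L}^*_{1,k}$, $\mathcal{L}^*_{2,k}$, $\mathcal{L}^*_{3,k}$. Dominated convergence is justified by the absolute convergence of the three limiting series established in the subsequent estimates.

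For $\sum_k\mathcal{L}^*_{1,k}$ and $\sum_k\mathcal{L}^*_{2,k}$, I would use $|\langle\w,\psi_{1,k}-\psi_{2,k}\rangle|\leq \|\w\|_{L^2(\p \M)}\|\psi_{1,k}-\psi_{2,k}\|_{L^2(\p \M)}$ and then Cauchy--Schwarz with weights $k^{-t/n}$, $k^{t/n}$. This yields, for any $N\geq 1$,
\[
\Big|\sum_{k>N}\mathcal{L}^*_{1,k}\Big|\leq \|\w\|_{L^2(\p\M)}\Big(\sum_{k>N} k^{-2t/n}\|\psi_{1,k}-\psi_{2,k}\|^2\Big)^{1/2}\Big(\sum_{k\geq 1}\frac{k^{2t/n}|\langle\psi_{1,k},\ww\rangle|^2}{|\lambda_\tau^2-\lambda_{1,k}|^2}\Big)^{1/2}.
\]
The factor $\|\w\|_{L^2(\p\M)}$ is $\tau$-independent (since $|e^{i\lambda_\tau\psi}|=e^{-\psi}$ with $\psi|_{\p\M}\geq c>0$) and bounded by $C\|\eta\|_{H^2}$. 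The middle factor vanishes as $N\to\infty$ by the second half of \eqref{1.16}, while Lemma~\ref{L.5.1} bounds the last by $C\tau^{t}$. The finite remainder $\sum_{k\leq N}\mathcal{L}^*_{1,k}$ is $O(\tau^{-2})$ because each denominator is $\gtrsim\tau^2$. Hence $\limsup_{\tau\to\infty}\tau^{-t}|\sum_k\mathcal{L}^*_{1,k}|=0$, and the argument for $\mathcal{L}^*_{2,k}$ is symmetric.

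The main piece is $\sum\mathcal{L}^*_{3,k}$. Rewriting
\[
\mathcal{L}^*_{3,k}=\langle\w,\psi_{2,k}\rangle\langle\psi_{2,k},\ww\rangle\cdot\frac{\lambda_{1,k}-\lambda_{2,k}}{(\lambda_\tau^2-\lambda_{1,k})(\lambda_\tau^2-\lambda_{2,k})},
\]
I would split the mixed denominator as
\[
\frac{1}{(\lambda_\tau^2-\lambda_{1,k})(\lambda_\tau^2-\lambda_{2,k})}=\frac{1}{(\lambda_\tau^2-\lambda_{2,k})^2}+\frac{\lambda_{1,k}-\lambda_{2,k}}{(\lambda_\tau^2-\lambda_{1,k})(\lambda_\tau^2-\lambda_{2,k})^2}
\]
so that Lemma~\ref{L.5.1} (with $\ell=2$) becomes directly applicable. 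Setting $L^*=\limsup_k k^{-t/n}|\lambda_{1,k}-\lambda_{2,k}|$ and choosing, for any $\varepsilon>0$, $N_\varepsilon$ such that $|\lambda_{1,k}-\lambda_{2,k}|\leq(L^*+\varepsilon)k^{t/n}$ for $k>N_\varepsilon$, a Cauchy--Schwarz on the main piece combined with \eqref{5.1} and \eqref{5.2} (used respectively with weight $k^{2t/n}$ and with $t=0$) yields a bound of the form $C(L^*+\varepsilon)\tau^{t}\|\eta\|_{H^2}$. The error piece, using $|\lambda_\tau^2-\lambda_{1,k}|\geq 2\tau$ and a second Cauchy--Schwarz in which both factors invoke \eqref{5.1}--\eqref{5.2} with weight $k^{2t/n}$, is controlled by $C\tau^{2t-1}\|\eta\|_{H^2}$. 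Dividing by $\tau^{t}$, taking $\limsup_{\tau\to\infty}$, then letting $\varepsilon\to 0$, yields \eqref{5.17}.

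The main obstacle is precisely the $\mathcal{L}^*_{3,k}$ estimate: the product denominator does not fit Lemma~\ref{L.5.1} directly, and the trick of absorbing the mismatch into an error term costs a factor $\tau^{2t-1}$. It is exactly this factor that pins down the constraint $t<1/2$ in the hypothesis.
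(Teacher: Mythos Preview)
Your approach matches the paper's: the same telescoping decomposition of $\mathcal{L}(\tau,\mu)$ into three pieces, the same tail-cutting device combined with weighted Cauchy--Schwarz and Lemma~\ref{L.5.1}. Two small points deserve comment.

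First, the claim that the argument for $\sum_k\mathcal{L}^*_{2,k}$ is ``symmetric'' to that for $\sum_k\mathcal{L}^*_{1,k}$ glosses over a mismatch: in $\mathcal{L}^*_{2,k}$ the denominator is $\lambda_\tau^2-\lambda_{1,k}$ while the surviving inner product is $\langle\w,\psi_{2,k}\rangle$, so Lemma~\ref{L.5.1} does not apply directly. The paper handles this by writing $\psi_{2,k}=\psi_{1,k}+(\psi_{2,k}-\psi_{1,k})$, which produces the extra term $\|\psi_{1,k}-\psi_{2,k}\|^2/|\lambda_\tau^2-\lambda_{1,k}|$ in \eqref{5.20}, controlled via the observation $|\lambda_\tau^2-r|\geq 2\sqrt{r-1}$ together with Weyl's law. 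Your own denominator-splitting trick from the $\mathcal{L}^*_{3,k}$ step would work equally well here.

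Second, your handling of $\mathcal{L}^*_{3,k}$ is a genuine (minor) variation: you split the \emph{denominator} to force both inner-product factors to carry $\lambda_{2,k}$, whereas the paper splits the \emph{numerator} $\langle\w,\psi_{2,k}\rangle=\langle\w,\psi_{1,k}\rangle+\langle\w,\psi_{2,k}-\psi_{1,k}\rangle$, so that in the main term each factor matches its own denominator and \eqref{5.1}--\eqref{5.2} apply directly (see \eqref{5.21}). Both routes give \eqref{5.25}. Note, however, that your error term is $O(\tau^{2t-1})$ and thus only needs $t<1$ after dividing by $\tau^t$; the restriction $t<1/2$ actually enters through Lemma~\ref{L.5.1} (via the identification $H^t_0(\M)=H^t(\M)$), not through your splitting.
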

\begin{proof} 
We split $\mathcal{L}(\tau,\mu )$ into three series
$$
\mathcal{L}(\tau,\mu)=\sum_{k\ge1} \mathcal{L}_{1,k}(\mu,\tau)+\sum_{k\ge1} \mathcal{L}_{2,k}(\mu,\tau)+\sum_{k\ge1} \mathcal{L}_{3,k}(\mu,\tau),
$$
with
$$
\mathcal{L}_{1,k}(\tau ,\mu )=(\mu - \lambda_\tau ^2)\frac{\seq{\w,\psi_{1,k}-\psi_{2,k}} \seq{\psi_{1,k},\ww}}{ (\lambda_\tau^2 - \lambda_{1,k})(\mu - \lambda_{1,k})},
$$
%%%%%%%%%%%%%%
%%%%%%%%%%%%%%%%
$$
\mathcal{L}_{2,k}(\tau ,\mu )=(\mu - \lambda_\tau^2)\frac{\seq{ \w,\psi_{2,k}}\seq{ \psi_{1,k}-\psi_{2,k},\ww}}{ (\lambda_\tau ^2 - \lambda_{1,k})(\mu - \lambda_{1,k})},
$$
\begin{align*}
\mathcal{L}_{3,k}(\tau ,\mu )&=(\mu - \lambda_\tau^2)\seq{ \w,\psi_{2,k} } \seq{\psi_{2,k},\ww}
\\
&\hskip 1.5cm\times \left(\frac{1}{ (\lambda_\tau^2 - \lambda_{1,k})(\mu - \lambda_{1,k})} -\frac{1}{ (\lambda_\tau^2 - \lambda_{2,k})(\mu - \lambda_{2,k})}\right).
\end{align*}
Under assumption \eqref{1.16} and in light of \eqref{5.1}, we can see that the series in $\mathcal{L}_{1,k}(\tau ,\mu )$, $\mathcal{L}_{2,k}(\tau ,\mu )$ and $\mathcal{L}_{3,k}(\tau ,\mu )$ converge uniformly with respect to $\mu \ll -1$.  Therefore, $\mathcal{L}(\tau,\mu )$ converge to $\mathcal{L}^*(\tau )$ as $\mu\to-\infty$. 

We have
\begin{equation}\label{5.19}
|\mathcal{L}^*_{1,k}(\tau)|\leq \norm{ \w}_{L^2(\p \M)}\norm{\psi_{1,k}-\psi_{2,k}}_{L^2(\p \M)} \Abs{\frac{ \seq{\psi_{1,k},\ww}}{\lambda_\tau^2 - \lambda_{1,k}}},
\end{equation}
\begin{multline}\label{5.20}
|\mathcal{L}^*_{2,k}(\tau)|\le  \frac{\norm{ \w}_{L^2(\p \M)}\norm{ \ww}_{L^2(\p \M)}\norm{\psi_{1,k}-\psi_{2,k}}_{L^2(\p \M)}^2}{ |\lambda_\tau^2 - \lambda_{1,k}|}\cr
 +\norm{ \ww}_{L^2(\p \M)}\norm{\psi_{1,k}-\psi_{2,k}}_{L^2(\p \M)} \Abs{\frac{ \seq{ \w,\psi_{1,k}}}{ \lambda_\tau^2 - \lambda_{1,k}}},
\end{multline}
\begin{multline}\label{5.21}
|\mathcal{L}^*_{3,k}(\tau)|\le \norm{ \w}_{L^2(\p \M)}\norm{\psi_{1,k}-\psi_{2,k}}_{L^2(\p \M)}\frac{|\lambda_{2,k}-\lambda_{1,k}|}{|\lambda_\tau^2 - \lambda_{2,k}|}  \Abs{\frac{ \seq{ \psi_{2,k},\ww}}{ \lambda_\tau^2 - \lambda_{2,k}}}\cr
+|\lambda_{2,k}-\lambda_{1,k}|\Abs{\frac{\seq{ \w,\psi_{1,k}}}{ \lambda_\tau^2 - \lambda_{1,k}}}\Abs{\frac{\seq{\psi_{2,k},\ww}}{ \lambda_\tau^2 - \lambda_{2,k}}}.
\end{multline}
But
\begin{equation}\label{5.22} 
\sup_{\tau>1}\norm{\w}_{L^2(\p \M)}\leq \norm{\bb_1}_{L^2(\p \M)}\leq C\norm{\eta}_{H^2(S_y\M_1)}
\end{equation}
and
\begin{equation}\label{5.22.1} 
\sup_{\tau>1}\norm{\ww}_{L^2(\p \M)}\leq \norm{\bb_1}_{L^2(\p \M)}\leq C,
\end{equation}
the constant $C$ only depends on $\M$. This estimate entails in particular that
$$
\limsup_{\tau\to+\infty}\tau^{-t}|\mathcal{L}^*_{1,k}(\tau)|=0,\quad k\geq1.
$$
Thus, for an arbitrary positive integer $n_1$, we get
$$
\limsup_{\tau\to+\infty}\tau^{-t}\sum_{k=1}^\infty |\mathcal{L}^*_{1,k}(\tau)|=\limsup_{\tau\to+\infty}\tau^{-t}\sum_{k=n_1}^\infty |\mathcal{L}^*_{1,k}(\tau)|.
$$
This  estimate together with \eqref{5.1}, \eqref{5.19}, \eqref{5.22} and \eqref{5.22.1} imply
\begin{align*}
\tau^{-t}\sum_{k=n_1}^\infty |\mathcal{L}^*_{1,k}(\tau)|&\le  C\left(\sup_{\tau>1}\tau^{-2t}\sum_{k=1}^\infty k^{2t/n}\Abs{\frac{ \seq{\psi_{1,k},\ww}}{ \lambda_\tau^2 - \lambda_{1,k}}}^2\right)^{1/2} 
\\
&\hskip 2cm \times\left(\sum_{k=n_1}^\infty k^{-2t/n}\norm{\psi_{1,k}-\psi_{2,k}}_{L^2(\p \M)}^2\right)^{1/2}
\\
&\le C\left(\sum_{k=n_1}^\infty k^{-2t/n}\norm{\psi_{1,k}-\psi_{2,k}}_{L^2(\p \M)}^2\right)^{1/2},
\end{align*}
the constant $C$ is independent on $\tau$.
Since the last term goes to zero as $n_1$ tends to $\infty$ by \eqref{1.18}, we easily get
\begin{equation}\label{5.23}
\limsup_{\tau\to+\infty}\tau^{-t}\sum_{k=1}^\infty |\mathcal{L}^*_{1,k}(\tau)|=0.
\end{equation}
In the sequel, we use the following useful observation:  for $r>1$ the map $\tau\mapsto |\lambda_\tau ^2 -r|$ reach its minimum  at $\tau=\sqrt{r-1}$. Hence
$$
|\lambda_\tau ^2 -r|\geq 2\sqrt{r-1},\quad \tau>0.
$$
This observation together with \eqref{5.1}, \eqref{5.20} and  \eqref{A1} in Appendix \ref{appendix} yield
\begin{multline*}
\underset{\tau\to+\infty}{\limsup}\,\tau^{-t}\sum_{k=1}^\infty |\mathcal{L}^*_{2,k}(\tau)|= \underset{\tau\to+\infty}{\limsup}\,\tau^{-t}\sum_{k=n_1}^\infty |\mathcal{L}^*_{2,k}(\tau)|
 \cr
 \leq C\sum_{k=n_1}^\infty k^{-1/n}\norm{\psi_{1,k}-\psi_{2,k}}_{L^2(\p \M)}^2
\cr
+C\left(\sup_{\tau>1}\tau^{-2t}\sum_{k=1}^\infty k^{2t/n}\Abs{\frac{ \seq{\psi_{1,k},\w}}{ \lambda_\tau^2 - \lambda_{1,k}}}^2\right)^{1/2} \left(\sum_{k=n_1}^\infty k^{-2/n}\norm{\psi_{1,k}-\psi_{2,k}}_{L^2(\p \M)}^2\right)^{1/2}\cr
\leq C\sum_{k=n_1}^\infty k^{-2t/n}\norm{\psi_{1,k}-\psi_{2,k}}_{L^2(\p \M)}^2+C\left(\sum_{k=n_1}^\infty k^{-2t/n}\norm{\psi_{1,k}-\psi_{2,k}}_{L^2(\p \M)}^2\right)^{1/2}.
\end{multline*}
Then, using again the fact that $n_1$ is arbitrary and \eqref{1.16}, we find
\begin{equation}\label{5.24}
\limsup_{\tau\to+\infty}\tau^{-t}\sum_{k=1}^\infty |\mathcal{L}^*_{2,k}(\tau)|=0.
\end{equation}
The same argument as before enables us to obtain
\begin{equation}\label{5.25}
\limsup_{\tau\to+\infty}\tau^{-t}\sum_{k=1}^\infty |\mathcal{L}^*_{3,k}(\tau)|\leq C\norm{\eta}_{H^2(S_y^+\M_1)}\limsup_{k\to+\infty}k^{-t/n}|\lambda_{1,k}-\lambda_{2,k}|.
\end{equation}
The expected result follows from \eqref{5.23}, \eqref{5.24} and \eqref{5.25}.
\end{proof}
%%%%%%%%%%%%%%%%%%%%%%%%%%%%

\subsection{End of the proof of the main results}
We are now ready to complete the proof of Theorems \ref{t1} and \ref{t2}.
\begin{proof}[Proof of Theorem \ref{t1}]
Since  $A_\ell$, $\ell=1,2$, satisfy \eqref{t3e} and $w_{1,2}(\mu)=0$ on $\p\M$, we easily obtain the following identity, useful in the sequel,
\begin{equation}\label{5.26}
\mathcal{K}(\tau,\mu,\w)=(\partial_\nu +iA_1(\nu)) u_{1}(\lambda )-(\partial_\nu +iA_2(\nu)) u_{2}(\lambda )- \partial_\nu w_{1,2}(\mu )\quad \textrm{on}\,\, \p \M.
\end{equation}
By formula \eqref{5.15} we get
\begin{multline}\label{5.27}
\mathcal{L}(\tau,\mu) =\int_{\p\M} \mathcal{K}(\tau,\mu,\w)\overline{\ww}\ds
\cr
=\int_{\p\M} \para{\partial_\nu +iA_1(\nu)} u_{1}(\lambda )\overline{\ww}\ds-\int_{\p\M} \para{\partial_\nu +iA_2(\nu)} u_{2}(\lambda )\overline{\ww}\ds 
\cr
\qquad-\int_{\p\M}\partial_\nu w_{1,2}(\mu )\overline{\ww} \ds
\cr
=\int_{\p\M} \Lambda_{B_1}(\lambda_\tau^2)\w\overline{\ww}\ds-\int_{\p\M} \Lambda_{B_2}(\lambda_\tau^2)\w\overline{\ww}\ds\hskip 2.2cm
\cr 
-\int_{\p\M}\partial_\nu w_{1,2}(\mu )\overline{\ww} \ds\cr
=S_{B_1}(\tau)-S_{B_2}(\tau)-\int_{\p\M}\partial_\nu w_{1,2}(\mu )\overline{\ww} \ds \hskip 4.4cm.
\end{multline}
 According to Lemmas \ref{L.3.2} and \ref{L.5.2}, formula \eqref{5.27} and passing to the limit as $\mu$ goes to $-\infty$, we get
\begin{equation}\label{5.28}
S_{B_1}(\tau)-S_{B_2}(\tau)=\mathcal{L}^*(\tau).
\end{equation}
Furthermore, from \eqref{5.17} we have $\tau^{-t}\left(S_{B_1}(\tau)-S_{B_2}(\tau)\right)$ is bounded for $\tau >1$ and $t\in[0,1/2)$. Then $\tau^{-1}\left(S_{B_1}(\tau)-S_{B_2}(\tau)\right)$ goes to zero as $\tau$ tends to $\infty$. This in \eqref{4.39} yields, 
\begin{equation}\label{5.29}
\int_{S_y^+\M_1}\para{e^{i\I_1A(y,\theta)}-1}\eta(y,\theta) \, \dd\omega_y(\theta)=0.
\end{equation}
Since $\eta$ is arbitrary in $H^2(S_y\M)$,  we obtain that $\I_1A(y,\theta)\in 2\pi\mathbb Z$ for any $\theta \in S_y^+\M_1$. 
On the other hand, since $\p M_1$ is strictly convex, $S^+_y\M_1 \ni \theta \mapsto \ell_+(y,\theta)$ is continuous, and letting $\theta$ tend to a tangent direction $\theta_0 \in S_y \p M_1$ we get
\[ \lim_{\theta \to \theta_0} \ell_+(y,\theta) = 0\]
hence
\[ 2\pi m = \lim_{\theta \to \theta_0} \I_1A(y,\theta) = 0 \]
and therefore

\begin{equation}\label{5.30}
\mathcal{I}_1A(y,\theta)=0,\quad \theta\in S_y^+\M_1
\end{equation}
which implies that $\I_1A=0$, because $y\in \p \M_1$ is arbitrary. From \eqref{2.15}, we deduce that the solenoidal part $A^s$ in the Hodge decomposition of the $1$-form $A$ is equal to zero. This completes the proof of the first part of Theorem \ref{t1}. 

Now let us consider the second part of the theorem. For this purpose, we assume that condition \eqref{1.18} is fulfilled and we would like to show that $q_1=q_2$. Note first that the condition $A^s=0$ implies $dA=0$ and, since $\M_1$ is simply connected, there exists $\varphi\in W^{3,\infty}(\M_1)$ such that $d\varphi =A$. Since $A=0$ on  $M_1\setminus M $ by eventually extracting a constant to $\varphi$ we may assume that $\varphi=0$ on $M_1\setminus M $. In particular we have $\varphi_{|\partial\M}=\partial_\nu\varphi_{|\partial\M}=0$. Let $B_3=(A_1,q_2)$. Applying \eqref{1.5}, we deduce that
$$e^{-i\varphi}H_{B_2}e^{i\varphi}=H_{B_3}.$$
In particular, for $\lambda_{3,k}$, $k\geq1$, the non-decreasing sequence of eigenvalues of $H_{B_3}$ we have $\lambda_{3,k}=\lambda_{2,k}$ and $\phi_{3,k}=e^{-i\varphi}\phi_{2,k}$ corresponds to an orthonormal basis of eigenfunctions of $H_{B_3}$. Moreover, fixing
$\psi_{3,k}=\para{\p_\nu+iA_2(\nu)}\phi_{3,k}$, we deduce that
$$\begin{aligned}\psi_{3,k}(x)=\para{\p_\nu+iA_1(\nu)}e^{-i\varphi}\phi_{2,k}(x)&=e^{-i\varphi}\para{\p_\nu+iA_1(\nu)-i\partial_\nu\varphi}\phi_{2,k}(x)\\
\ &=\para{\p_\nu+iA_2(\nu)}\phi_{2,k}(x)=\psi_{2,k}(x),\ x\in\partial \M.\end{aligned}$$
Combining this with \eqref{1.18}, we deduce that
 $$\lim_{k\to+\infty}\abs{\lambda_{1,k}-\lambda_{3,k}}=0,\quad  \textrm{and}\;\;  \sum_{k\ge 1}\norm{\psi_{1,k}-\psi_{3,k}}_{L^2(\p \M)}^2<\infty.$$
In view of this gauge invariance property, from now on, without lost of generality, we may assume that $A_1=A_2$. According to \eqref{1.18},  with $t=0$, the right hand side of \eqref{5.17} is equal to zero. 
\end{proof}

\begin{proof}[Proof of Theorem \ref{t2}]  We already proved that $dA_1=dA_2$ in Theorem \ref{t1} and according to the gauge invariance property of the boundary spectral data, without lost of generality, we may assume that $A_1=A_2$. Then a straightforward application of the min-max principle yields
\begin{equation}\label{5.32}
|\lambda_{1,k}-\lambda_{2,k}|\leq \norm{q_1-q_2}_{L^\infty(\M)}.
\end{equation}
In that case \eqref{1.19} is reduced to
\begin{equation}\label{5.33}
\sum_{k\geq1}\norm{\psi_{1,k}-\psi_{2,k}}^2_{L^2(\p \M)}<\infty.
\end{equation}
Combining this with \eqref{4.45}, \eqref{5.17} for $t=0$ (which is valid in the present case) and taking into account that
\begin{equation}\label{5.34}
\limsup_{\tau\to+\infty}\left|S_{B_1}(\tau)-S_{B_2}(\tau)\right|=\limsup_{\tau\to+\infty} |\mathcal{L}^*(\tau )|,
\end{equation}
we obtain, for any $\eta\in H^2(S^+_y\M_1)$ real valued, that
\begin{equation}\label{5.35}
\bigg|\int_{S_y^+\M_1}\I_0(q)(y,\theta)\eta (y,\theta)\dss\bigg| \leq C\norm{\eta}_{H^2( S_y^+\M_1)}\limsup_{k\to+\infty}|\lambda_{1,k}-\lambda_{B_2^k}|.
\end{equation}
Since $q\in H^1(\M_1)$, by the smoothing effect of the normal operator $N_0=\I_0^\ast\I_0$ (see \eqref{2.22}), $N_0q\in H^2(M)$ and
\begin{equation}\label{5.36}
\norm{N_0(q)}_{H^2(\M_1)}\leq C\norm{q}_{H^1(\M)}\leq Cr'.
\end{equation}
Since $\I_0:\ H^2(\M_1)\rightarrow H^2(\p_+S\M_1)$ is bounded, we can take  $\eta=\I_0N_0(q)$. We  integrate with respect to $y\in\p \M_1$ the left hand side \eqref{5.35} in order to get
$$
\int_{\p_+S\M_1}\I_0(q)(y,\theta)\eta(y,\theta)\dss=\int_{\M_1}|N_0(q)|^2\dv=\norm{N_0(q)}_{L^2(\M_1)}^2.
$$
Combined with \eqref{5.35}, this inequality entails
\begin{equation}\label{5.37}
\norm{N_0(q)}_{L^2(\M_1)}^2\leq C\norm{\I_0N_0(q)}_{H^2(\p_+ S\M_1)}\limsup_{k\to+\infty}|\lambda_{1,k}-\lambda_{2,k}|.
\end{equation}
On the other hand,  it follows from  \eqref{5.36} 
\begin{equation}\label{5.38}
\norm{\I_0N_0(q)}_{H^2(\p_+ S\M_1)}\leq C\norm{N_0(q)}_{H^2(\M_1)}\leq C',
\end{equation}
the constants $C$ and $C'$ only depend on $\M$ and $r$.  This \eqref{5.37} and \eqref{5.38}, give
\begin{equation}\label{5.39}
\norm{N_0(q)}_{L^2(\M_1)}^2\leq C\limsup_{k\to+\infty}|\lambda_{1,k}-\lambda_{2,k}|,
\end{equation}
the constant $C$ only depends on $\M$ and $r'$. We complete the proof by using the interpolation inequality
$$\norm{N_0(q)}_{H^1(\M_1)}\leq C\norm{N_0(q)}_{L^2(\M_1)}^{\frac{1}{2}}\norm{N_0(q)}_{H^2(\M_1)}^{\frac{1}{2}}\leq C' \norm{N_0(q)}_{L^2(\M_1)}^{\frac{1}{2}},$$ 
the constants $C$ and $C'$ only depend on $\M$, $r$. We then apply \eqref{2.21} to get \eqref{1.20}.
\end{proof}

\section{Extension to the Neumann case}
\setcounter{equation}{0}

We explain in this section how to adapt the preceding analysis to obtain an uniqueness result for an inverse spectral problem fo the Schr\"odinger operator under Neumann boundary condition.
%In this section, assumptions on manifold $\M$ and potentials are those we used in the Dirichlet case. 
%In this section we state and prove our result for the Neumann realization of the magnetic Schr\"odinger operator. For this purpose, for 

For $B=(A,q)\in \mathscr{B}$, define the unbounded  self-adjoint operator $\mathscr{H}_B$, acting in $L^2(\M)$ as follows
\begin{equation}\label{8.2}
\mathscr{H}_Bu=\mathcal{H}_B u=-\Delta_Au+qu,\quad u\in\mathscr{D}(\mathscr{H}_B),
\end{equation}
with  domain
\begin{equation}\label{8.3}
\mathscr{D}(\mathscr{H}_B)=\set{u\in H^1(\M),\,\, -\Delta _Au+qu\in L^2(\M),\ (\p_\nu+iA(\nu))u_{|\p M}=0}.
\end{equation}

Fix $B_\ell\in\mathscr{B}_r$, $\ell=1,2$ and  denote by $\para{\mu_{\ell,k}, \chi_{\ell,k}}$, $k\geq 1$, the eigenvalues and normalized eigenfunctions of $\mathscr{H}_{B_\ell}$.

We aim in this section to prove the following uniqueness result.

\begin{theorem}
\label{t3} 
Assume that \eqref{t3e} and the conditions 
\bel{t3b}   \sum_{k=1}^{+\infty}\norm{\chi_{1,k}-\chi_{2,k} }_{L^2(\p M)}^2<\infty,\ee
\bel{t33}\lim_{k\to+\infty}k^{-\frac{1}{n}}\abs{\mu_{1,k}-\mu_{2,k}}=0,\ee
are fulfilled.
Then $A_1^s=A_2^s$. 
\end{theorem}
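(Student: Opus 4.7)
The plan is to adapt the proof of Theorem~\ref{t1} to the Neumann setting. The underlying geometric input (existence of GO solutions based on the eikonal/transport equations on the extended manifold $\M_1$, injectivity of the solenoidal part of $\I_1$) is unchanged; only the functional analytic framework and the spectral estimates need modification, and the latter actually become sharper, explaining the optimal exponent $k^{1/n}$.

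First I would introduce the \emph{Neumann-to-Dirichlet} map $\Lambda_{B_\ell}^N(\mu)$ for $\mu\in\rho(\mathscr{H}_{B_\ell})$: given $g\in H^{1/2}(\p\M)$, solve
\[
(\mathcal{H}_{B_\ell}-\mu)u=0\text{ in }\M,\qquad (\p_\nu+iA_\ell(\nu))u=g\text{ on }\p\M,
\]
and set $\Lambda_{B_\ell}^N(\mu)g=u|_{\p\M}$. By the analogue of Lemma~\ref{L.3.1}, the solution admits the spectral expansion $u_\ell(\mu)=\sum_k\langle g,\chi_{\ell,k}\rangle(\mu-\mu_{\ell,k})^{-1}\chi_{\ell,k}$ converging in $H^1(\M)$, and one proves (as in Lemma~\ref{L.3.2}) that $u_1(\mu)-u_2(\mu)\to 0$ in $H^2(\M)$ as $\mu\to-\infty$ under \eqref{t3e}, so in particular the Dirichlet trace of the difference vanishes.

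Second, I would reuse verbatim the GO solutions $\varphi_{j,\tau}^*$ constructed in Section~4.2, since they only depend on the eikonal/transport equations in the interior. With input $g_\tau=(\p_\nu+iA_1(\nu))\varphi_{1,\tau}^*|_{\p\M}$, set
\[
S_{B_\ell}^N(\tau)=\big\langle \Lambda_{B_\ell}^N(\lambda_\tau^2)g_\tau,(\p_\nu+iA_2(\nu))\varphi_{2,\tau}^*\big\rangle_{\p\M}.
\]
Using Green's formula \eqref{2.5} exactly as in the proof of Lemma~\ref{L.4.1}, and exploiting \eqref{t3e} so that the $A_\ell(\nu)$ boundary terms match, one obtains the same interior representation as in \eqref{4.26}. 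The asymptotic analysis of Lemma~\ref{L.4.2} then gives
\[
\lim_{\tau\to+\infty}\tau^{-1}\big(S_{B_1}^N(\tau)-S_{B_2}^N(\tau)\big)=2i\int_{S_y^+\M_1}\big(e^{i\I_1 A(y,\theta)}-1\big)\eta(y,\theta)\,\dd\omega_y(\theta).
\]

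Third, I would derive the Neumann analogue of Lemma~\ref{L.5.1}. The crucial gain comes from the fact that the form domain of $\mathscr{H}_{B_\ell}+\mu$ is $H^1(\M)$ (no boundary condition), so by interpolation $\mathscr{D}((\mathscr{H}_{B_\ell}+\mu)^{t/2})=H^t(\M)$ for all $t\in[0,1]$, whereas the Dirichlet argument was limited to $t<1/2$ by the equality $H^t_0(\M)=H^t(\M)$. Taking $t=1$ yields
\[
\sum_{k\ge 1}k^{2/n}\Abs{\frac{\langle g_\tau,\chi_{\ell,k}\rangle}{\lambda_\tau^2-\mu_{\ell,k}}}^2\le C\tau^2\|\eta\|_{H^2(S_y^+\M_1)}^2,
\]
via the Weyl-type lower bound of Appendix~A. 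Using this with the boundary pairing identity
\[
\mathcal{L}^*(\tau)=S_{B_1}^N(\tau)-S_{B_2}^N(\tau)
\]
decomposed as in \eqref{5.18}, and applying Cauchy--Schwarz against $\sum_k\|\chi_{1,k}-\chi_{2,k}\|_{L^2(\p\M)}^2<\infty$, one obtains the Neumann analogue of Lemma~\ref{L.5.2}:
\[
\limsup_{\tau\to+\infty}\tau^{-1}|\mathcal{L}^*(\tau)|\le C\|\eta\|_{H^2(S_y^+\M_1)}\limsup_{k\to+\infty}k^{-1/n}|\mu_{1,k}-\mu_{2,k}|.
\]

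Finally, under \eqref{t33} the right-hand side vanishes, so $\tau^{-1}(S_{B_1}^N(\tau)-S_{B_2}^N(\tau))\to 0$, and comparing with the representation asymptotic yields
\[
\int_{S_y^+\M_1}\big(e^{i\I_1 A(y,\theta)}-1\big)\eta(y,\theta)\,\dd\omega_y(\theta)=0
\]
for every $\eta\in H^2(S_y^+\M_1)$ and every $y\in\p\M_1$. Arguing as at the end of the proof of Theorem~\ref{t1} (letting $\theta$ tend to a tangent direction and using continuity of $\ell_+$) gives $\I_1 A\equiv 0$, whence $A_1^s=A_2^s$ by \eqref{2.15}.

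The main obstacle will be the first step: adapting Lemmas~\ref{L.3.1}--\ref{L.3.2} to the Neumann realization. The Caccioppoli-type inequality \eqref{3.5} used the Dirichlet trace of $u_\ell(\lambda)$ to kill boundary terms after multiplication by $\kappa^2\bar u$; in the Neumann case one must instead test against $\bar u$ directly and exploit the Neumann boundary condition $(\p_\nu+iA(\nu))u_\ell(\lambda)=g_\tau$ to control the boundary contribution, which couples the estimate to the trace of $g_\tau$. The extra care required is to show that, with the GO input $g_\tau$, one still recovers the $H^2(\M)$-convergence of $w_{1,2}(\mu)$ needed to pass to the limit $\mu\to-\infty$ in the boundary identity; once this is secured, the rest of the argument mirrors Sections~4--5 with $\chi_{\ell,k}$ in place of $\psi_{\ell,k}$.
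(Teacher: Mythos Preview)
Your overall strategy is correct and matches the paper's: Neumann-to-Dirichlet map, same GO ansatz $\varphi_{j,\tau}^*$, same representation asymptotic \eqref{l2aN}, the sharpened spectral estimate with exponent $t=1$ (which is exactly the paper's Lemma~\ref{l44}), and the same endgame via injectivity of $\I_1$ on solenoidal forms. Your explanation of why $t=1$ is now available---the form domain of $\mathscr{H}_{B_\ell}$ is all of $H^1(\M)$---is precisely the mechanism the paper exploits.

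The one place where you diverge from the paper is the ``main obstacle'' you single out, and here the paper takes a genuinely simpler route. You propose to adapt Lemmas~\ref{L.3.1}--\ref{L.3.2} and pass to the limit $\mu\to-\infty$ in an auxiliary quantity $\mathcal L(\tau,\mu)$, worrying about a Neumann Caccioppoli inequality and $H^2$-convergence of $w_{1,2}(\mu)$. The paper dispenses with all of this. The reason the Dirichlet argument needed the $\mu$-parameter was that the eigenfunction expansion \eqref{3.3} converges only in $H^1(\M)$, which is not enough to read off the \emph{Neumann} trace appearing in $\Lambda_{B_\ell}$. In the Neumann setting the output of the map is a \emph{Dirichlet} trace, and the $H^1(\M)$-convergent series $v_\ell(\lambda)=\sum_k\langle g,\chi_{\ell,k}\rangle(\lambda-\mu_{\ell,k})^{-1}\chi_{\ell,k}$ already yields $v_\ell(\lambda)_{|\p\M}$ as a convergent series in $H^{1/2}(\p\M)$ (this is the paper's Lemma~\ref{lem:Neum}). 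Hence the paper writes down $Q_2(\tau)-Q_1(\tau)$ directly as the series \eqref{repNN}, decomposes it into three pieces $\mathcal E_k,\mathcal F_k,\mathcal G_k$ (the direct analogue of your $\mathcal L^*_{j,k}$), and estimates using \eqref{l4aN} and the bound $\norm{(\p_\nu+iA_1\nu)\varphi_{j,\tau}^*}_{L^2(\p\M)}\le C\tau$. No limit in $\mu$, no Caccioppoli inequality, no $w_{1,2}(\mu)$.

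So your plan would work, but the detour through Lemmas~\ref{L.3.1}--\ref{L.3.2} is unnecessary; the obstacle you anticipate simply does not arise.
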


%Let us also mention that in the context of Theorem \ref{t3}, since $H_{N,B_1}-H_{N,B_2}$ is at most a differential operator of order one, applying the asymptotic properties of the eigenvalue of the Laplacian on manifold given by 

Note that, according to Weyl's formula in \cite[page 114]{Ber}, we have that
$$
\lim_{k\to+\infty}k^{-\frac{1}{n}}\abs{\mu_{1,k}-\mu_{2,k}}<\infty.
$$
Therefore, condition \eqref{t33} seems to be the optimal rate of growth of the difference of eigenvalues that guaranty the uniqueness of the magnetic potential. %To our best knowledge, this paper is the first claiming recovery of magnetic Shr\"odinger operators from such a weak asymptotic properties including the rate of growth of the difference of eigenvalues. 

%Note also that the knowledge of the magnetic potential near the boundary, needed for Theorem \ref{t1}, \ref{t2}, is no longer required in Theorem \ref{t3}.

%The proof of Theorem \ref{t3} is based on the same approach as Theorem \ref{t1} and \ref{t2}. We start by considering an intermediate result and a boundary representation similar to the one considered for the Dirichlet problem in Section 3. Then we will conclude by combining these arguments.

\subsection{Boundary representation formulae for the Neumann problem}
%Recall that the Neumann realisation $H_{N,B}$ of $\mathcal H_{B}$ is defined by
%$$ H_{N,B}u := \mathcal H_{B}u,\quad u\in D(H_{N,B}) := \left\{v  \in H^1(M) \; ; \;\Delta v \in L^2(M),\ (\p_\nu+iA\nu) v_{|\p \M}=0\right\}.$$
%Here,  $H_{N,B}$ is associated to the quadratic form $b_N$ given by
%$$b_N(u,v) = \int_{M} \left\langle \nabla_Au(x),
%\overline{\nabla_Av(x)}\right\rangle_g \dv(x) + \int_{M}q(x)\,u(x)\overline{v(x)}\,\dv(x),\quad u,v\in H^1(M).$$

%By classical elliptic regularity, for $B\in \mathscr{B}$, 
%\[
%D(\mathscr{H}_B)= \{v\in H^2(M);\; (\p_\nu+iA\nu) v_{|\p \M}=0\}.
%\]
%Therefore,  we have $\partial_\nu {\chi_{k}}_{|\p \M}\in H^{1/2}(\p \M)$. 

For $g \in H^{1/2}(\p \M)$ and  $\rho (\mathscr{H}_B)$, consider the BVP
\bel{eq:lambdaN}
\left\{ 
\begin{array}{ll} 
(\mathcal H_B -\lambda )v  =  0\quad  &\mbox{in}\; \M ,
\\ 
(\p_\nu+iA\nu)v = g &\mbox{on}\; \p \M.
\end{array}\right.
\ee

Similarly to the Dirichlet case, for $\ell=1,2$, define the N-to-D map
\[
\mathcal N_{\ell ,\lambda}:g\in H^{{1\over 2}}(\partial \M)\mapsto{v_j(\lambda)}_{|\p \M},
\]
where $v_j(\lambda)\in H^2(\M )$ is the solution of the BVP \eqref{eq:lambdaN}. 

Define, For $\ell =1,2$, 
\begin{align}
 Q_j(\tau)&=\left\langle \mathcal N_{j,\lambda_\tau^2} (\p_{\nu}+iA_j\nu)\varphi_{1,\tau} ^*,(\p_{\nu}+iA_j\nu)\varphi_{2,\tau} ^*\right\rangle \label{Q}
 \\
 &=\int_{\p \M}(\p_{\nu}-iA_j\nu)\overline{\varphi_{2,\tau} ^*}\mathcal N_{j,\lambda_\tau^2} (\p_{\nu}+iA_j\nu)\varphi_{1,\tau} ^*\ds, \nonumber
\end{align}
with $\varphi_{j,\tau}^*$, $j=1,2$, given in \eqref{4.5}. 
%Then, we will , from the asymptotic behavior of the difference  $Q_2(\tau)-Q_1(\tau)$ as $\tau\to+\infty$, we will derive a formula involving   different expressions involving the difference of magnetic potentials $A_1-A_2$ and the difference of electric potentials $q_1-q_2$.  We obtain the following.

\begin{proposition}\label{p1N} 
%For all  $\tau>1 $ , $j=1,2$, the scalar products $Q_j(\tau)$ have the following expression 
We have
\begin{align}
&Q_1(\tau)= \int_{\p \M} ({\rm i}\lambda_\tau)\p_\nu\psi \bb_1+{\rm i}(A_1\nu) \bb_1+\p_\nu \bb_1) \overline{ \bb_2(x)} \ds(x)\label{l1aN}
\\
&-2\lambda_\tau\int_{S_y(\M_1 )} \int_0^{\ell_+(y,\theta)} \widetilde{\sigma}_A(r,y,\theta)\widetilde{\beta}^*_1\overline{\widetilde{\beta}^*_2}\varrho^{1/2} \, \dd r \, \dd\omega_y(\theta)-\int_\M\bb_1\overline{\h_{B_1} (\bb_2)}\dv\nonumber
\\
&+\int_\M \left[(\mathscr{H}_{B_1}-\lambda_\tau^2)^{-1}(e^{{\rm i}\lambda_\tau\psi} \mathcal H_{A_1,q_1}\bb_1)\right]e^{-{\rm i}\lambda_\tau\psi}\left[2\lambda_\tau(A\nabla \psi)\overline{\bb_2}+ \overline{ \mathcal H_{B_1}\bb_2}\right]\dv\nonumber
\end{align}
and
\begin{align}
&Q_2(\tau)= \int_{\p \M}   ({\rm i}\lambda_\tau)\p_\nu\psi \bb_1+{\rm i}(A_1\nu) \bb_1+\p_\nu \bb_1) \overline{\bb_2(x)}\ds(x)\label{l1bN}
\\
&\hskip 7cm -\int_\M\bb_1\overline{\h_{B_2} (\bb_2)}\dv\nonumber
\\
&+\int_\M \left[(\mathscr{H}_{B_2}-\lambda_\tau^2)^{-1}e^{{\rm i}\lambda_\tau\psi}( 2\lambda_\tau(-A\nabla\psi)\bb_1+\mathcal H_{B_2}\bb_1)\right]\left(e^{-{\rm i}\lambda_\tau\psi} \overline{\mathcal H_{B_2}\bb_2}\right)\dv. \nonumber
\end{align}
\end{proposition}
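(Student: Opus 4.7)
My plan is to adapt the derivation of identities (4.7)--(4.8) from the Dirichlet case of Lemma \ref{L.4.1}, trading the roles of Dirichlet and Neumann boundary data. Since $\lambda_\tau^2=\tau^2-1+2i\tau$ has nonzero imaginary part, it lies in $\rho(\mathscr H_{B_1})\cap\rho(\mathscr H_{B_2})$. For $\ell=1,2$ I let $v_\ell\in H^2(\M)$ be the unique solution of
\begin{equation*}
\left\{\begin{array}{ll}(\mathcal H_{B_\ell}-\lambda_\tau^2)v_\ell=0 & \text{in } \M,\\ (\p_\nu+iA_\ell\nu)v_\ell=(\p_\nu+iA_\ell\nu)\varphi_{1,\tau}^* & \text{on } \p\M,\end{array}\right.
\end{equation*}
so that $v_\ell|_{\p\M}=\mathcal N_{\ell,\lambda_\tau^2}(\p_\nu+iA_\ell\nu)\varphi_{1,\tau}^*$ and $Q_\ell(\tau)=\int_{\p\M}v_\ell\,\overline{(\p_\nu+iA_\ell\nu)\varphi_{2,\tau}^*}\,\ds$. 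By (4.10) and (4.17), $(\mathcal H_{B_\ell}-\lambda_\tau^2)\varphi_{1,\tau}^*=e^{i\lambda_\tau\psi}k_\ell$ with $k_1=\mathcal H_{B_1}\bb_1$ and $k_2=\mathcal H_{B_2}\bb_1-2\lambda_\tau\seq{A,d\psi}\bb_1$. I therefore set $w_\ell=(\mathscr H_{B_\ell}-\lambda_\tau^2)^{-1}(e^{i\lambda_\tau\psi}k_\ell)$, which has zero Neumann trace on $\p\M$ by construction, and take the ansatz $v_\ell=\varphi_{1,\tau}^*-w_\ell$; it satisfies the two requirements of the BVP above.

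Applying Green's identity \eqref{2.5} to the pair $(v_\ell,\varphi_{2,\tau}^*)$ and solving for the pairing of $v_\ell$ with the Neumann trace of $\varphi_{2,\tau}^*$ yields
\begin{equation*}
Q_\ell(\tau)=\int_{\p\M}(\p_\nu+iA_\ell\nu)\varphi_{1,\tau}^*\,\overline{\varphi_{2,\tau}^*}\,\ds-\int_\M\Delta_{A_\ell}v_\ell\,\overline{\varphi_{2,\tau}^*}\,\dv+\int_\M v_\ell\,\overline{\Delta_{A_\ell}\varphi_{2,\tau}^*}\,\dv,
\end{equation*}
where the boundary contribution used $(\p_\nu+iA_\ell\nu)v_\ell=(\p_\nu+iA_\ell\nu)\varphi_{1,\tau}^*$. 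Expanding $(\p_\nu+iA_\ell\nu)\varphi_{1,\tau}^*=e^{i\lambda_\tau\psi}\para{i\lambda_\tau\p_\nu\psi\,\bb_1+iA_\ell(\nu)\bb_1+\p_\nu\bb_1}$ and using that $A_1=A_2$ on $\p\M$ by \eqref{t3e}, I recover the common first line of \eqref{l1aN}--\eqref{l1bN}. For the two volume integrals I use $\Delta_{A_\ell}v_\ell=(q_\ell-\lambda_\tau^2)v_\ell$ together with the expansions of $\Delta_{A_\ell}\varphi_{2,\tau}^*$ already derived in the proof of Lemma \ref{L.4.1} (just before \eqref{4.14} for $\ell=1$ and at \eqref{4.21} for $\ell=2$) and the identity $\Delta_{A_\ell}\bb_2=q_\ell\bb_2-\mathcal H_{B_\ell}\bb_2$. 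The $\lambda_\tau^2$ and $q_\ell v_\ell\overline{\varphi_{2,\tau}^*}$ contributions cancel pairwise, exactly as in the step passing from \eqref{4.13} to \eqref{4.14}; the sole structural difference from the Dirichlet derivation is that the combination of volume integrals appearing here has opposite overall sign, because Green's formula has been solved for $Q_\ell$ (Neumann trace on the side of $v_\ell$) rather than for $S_{B_\ell}$.

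Substituting $v_\ell=e^{i\lambda_\tau\psi}\bb_1-w_\ell$ splits the remaining volume term into an explicit $\bb_1$-part and a remainder against $w_\ell$. The explicit part produces $-\int_\M\bb_1\overline{\mathcal H_{B_\ell}\bb_2}\,\dv$ and, when $\ell=1$, the additional $-2\lambda_\tau\int_\M\bb_1\overline{\bb_2}\seq{A,d\psi}\,\dv$, which I then rewrite in geodesic polar coordinates centered at $y\in\p\M_1$ via $x=\exp_y(r\theta)$, $\dv=\varrho^{1/2}dr\,d\omega_y(\theta)$, and $\seq{A,d\psi}(\exp_y(r\theta))=\widetilde\sigma_A(r,y,\theta)$, exactly as in \eqref{4.42}; this matches the polar integral on the second line of \eqref{l1aN}. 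The $w_\ell$-remainder reproduces the last line of each formula. The main bookkeeping obstacle is tracking signs consistently through Green's identity and the pairwise cancellation of $q_\ell$-terms; once this is handled, the argument is a transcription of the Dirichlet computation, with the Neumann resolvent $(\mathscr H_{B_\ell}-\lambda_\tau^2)^{-1}$ replacing $R_{B_\ell}(\lambda_\tau^2)$.
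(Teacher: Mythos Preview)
Your proof is correct and follows essentially the same route as the paper's own argument: both apply Green's identity \eqref{2.5} to the pair $(v_\ell,\varphi_{2,\tau}^*)$, use the boundary condition $(\p_\nu+iA_\ell\nu)v_\ell=(\p_\nu+iA_\ell\nu)\varphi_{1,\tau}^*$ to produce the boundary term, and then invoke the computations already carried out in Lemma~\ref{L.4.1} (with the Neumann resolvent replacing the Dirichlet one in the ansatz $v_\ell=\varphi_{1,\tau}^*-(\mathscr H_{B_\ell}-\lambda_\tau^2)^{-1}(e^{i\lambda_\tau\psi}k_\ell)$). Your writeup is in fact more explicit than the paper's, which simply says ``we proceed as in Lemma~\ref{L.4.1}'' after deriving the Green's-formula identity.
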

\begin{proof}

Applying Green's formula, we get
$$\begin{aligned}Q_1(\tau)&=\int_\M\textrm{div}(v_1(\lambda_\tau^2)\overline{\nabla_{A_1}\varphi_{2,\tau} ^*})\dv
\\
\ &=\int_\M\left\langle \nabla_{A_1}v_1(\lambda_\tau^2),\overline{\nabla_{A_1}\varphi_{2,\tau} ^*}\right\rangle_g\dv+\int_\M v_1(\lambda_\tau^2)\overline{\Delta_{A_1}\varphi_{2,\tau} ^*}\dv
\\
\ &=-\int_\M \Delta_{A_1}v_1(\lambda_\tau^2)\overline{\varphi_{2,\tau}^*}\dv+\int_{\p \M}(\p_\nu+{\rm i}A_1\nu)v_1(\lambda_\tau^2)\overline{\varphi_{2,\tau}^*}d\sigma_g
\\
&\hskip 2cm +\int_\M v_1(\lambda_\tau^2)\overline{\Delta_{A_1}\varphi_{2,\tau} ^*}\dv
\end{aligned}$$
where $v_1(\lambda_\tau^2)$ the solution of the BVP \eqref{eq:lambdaN}, with $g=(\p_\nu+{\rm i}A_1\nu)\varphi_{1,\tau} ^*$, $\lambda=\lambda_\tau^2$, $A=A_1$, $q=q_1$.
Using the fact that $$(\p_\nu+{\rm i}A_1\nu)v_1(\lambda_\tau^2)(x)=g(x)=(\p_\nu+{\rm i}A_1\nu)\varphi_{1,\tau} ^*(x),\quad x\in\p \M,$$ we deduce that
$$\begin{aligned}Q_1(\tau)=&\int_{\p \M}   ({\rm i}\lambda_\tau)\p_\nu\psi \bb_1+{\rm i}(A_1\nu) \bb_1+\p_\nu \bb_1) \overline{\bb_2(x)}\ds(x)\\
\ &-\int_\M \Delta_{A_1}v_1(\lambda_\tau^2)\overline{\varphi_{2,\tau} ^*}\dv+\int_\M v_1(\lambda_\tau^2)\overline{\Delta_{A_1}\varphi_{2,\tau} ^*}\dv .\end{aligned}$$

This identity at hand, we proceed as in Lemma \ref{L.4.1} to get \eqref{l1aN}. Similar arguments allows us to derive \eqref{l1bN}.\end{proof}

As for the derivation of \eqref{4.39}, we obtain from \eqref{l1aN} and \eqref{l1bN} the following identity
\begin{align}
Q_2&(\tau)-Q_1(\tau) \label{repN}
\\
&= 2\lambda_\tau\int_{S_y(\M_1 )} \int_0^{\ell_+(y,\theta)} \widetilde{\sigma}_A(r,y,\theta)\widetilde{\beta}^*_1\overline{\widetilde{\beta}^*_2}\varrho^{1/2} \, \dd r \, \dd\omega_y(\theta) \nonumber
\\
& +\int_\M  (q_1-q_2)\bb_1\bb_2\dv(x) -\int_\M \bb_1\overline{(\Delta_{A_1}\bb_2-\Delta_{A_2}\bb_2)}\dv \nonumber
\\
& -\int_\M \left[(\mathscr{H}_{B_1}-\lambda_\tau^2)^{-1}(e^{{\rm i}\lambda_\tau\psi} \mathcal H_{B_1}\bb_1)\right]e^{-{\rm i}\lambda_\tau\psi}\left[2\lambda_\tau(A\nabla \psi)\overline{\bb_2}+ \overline{ \mathcal H_{B_1}\bb_2}\right]\dv. \nonumber
\\ 
&+\int_\M  \left[(\mathscr{H}_{B_2}-\lambda_\tau^2)^{-1}e^{{\rm i}\lambda_\tau\psi}( 2\lambda_\tau(-A\nabla\psi)\bb_1+\mathcal H_{B_2}\bb_1)\right]\left(e^{-{\rm i}\lambda_\tau\psi}\overline{\mathcal H_{B_2}\bb_2}\right)\dv ,\nonumber
\end{align}
from which we deduce that, for all $y\in\p \M_1$ and all $\eta\in H^2(S_y^+\M_1)$,  
\bel{l2aN}2i\int_{S_y^+(\M_1 )}\left(e^{iI_1A(y,\theta)}-1\right)\eta(y,\theta)\dd\omega_y(\theta)=\lim_{\tau\to+\infty}{Q_2(\tau)-Q_1(\tau)\over \tau}.\ee

The following lemma is needed in the proof of Theorem \ref{t3}.

\begin{lemma}\label{l44} For $\ell=1,2$, consider $\varphi_{j,\tau}^*$, $j=1,2$,  given by \eqref{4.5}. Then, we have
\bel{l4aN}\begin{aligned}\sum_{k=1}^\infty k^{\frac{2}{n}} \left|{\left\langle \varphi_{1,\tau} ^*,\chi_{\ell,k}\right\rangle \over \mu_{\ell,k}-\lambda_\tau^2 }\right|^2<C \norm{\eta}^2_{H^2(S_y^+( \M_1))}\tau^{2},\\
  \sum_{k=1}^\infty k^{\frac{2}{n}} \left|{\left\langle \overline{\varphi_{2,\tau} ^*},\chi_{\ell,k}\right\rangle \over \mu_{\ell,k}-\lambda_\tau^2 }\right|^2\leq C \tau^{2} ,\ \ell=1,2\end{aligned}\ee
  with $C>0$ independent of $\tau$.\end{lemma}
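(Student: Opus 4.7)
The plan is to mirror the strategy of Lemma~\ref{L.5.1}, with two modifications suited to the Neumann setting: (i)~work with the Neumann BVP solution rather than the Dirichlet one, and (ii)~exploit the fact that the form domain of the Neumann magnetic Laplacian is the whole of $H^1(\M)$ (no boundary trace condition), which is what makes the full $k^{2/n}$ weight (corresponding to one derivative, rather than the $t<1/2$ of Lemma~\ref{L.5.1}) achievable. I interpret the bracket $\langle \varphi_{1,\tau}^*, \chi_{\ell,k}\rangle$ as $\langle (\partial_\nu + iA_\ell\nu)\varphi_{1,\tau}^*, \chi_{\ell,k}\rangle_{L^2(\p\M)}$, which is the natural Neumann analog of $\langle \varphi_{1,\tau}^*, \psi_{\ell,k}\rangle$ on the Dirichlet side; an application of Green's identity \eqref{2.5} together with the Neumann boundary condition $(\partial_\nu+iA_\ell\nu)\chi_{\ell,k}=0$ shows that, if $v_\ell(\lambda_\tau^2)$ solves \eqref{eq:lambdaN} with $g=(\partial_\nu + iA_\ell\nu)\varphi_{1,\tau}^*$, then $(v_\ell(\lambda_\tau^2),\chi_{\ell,k})_{L^2(\M)} = \langle \varphi_{1,\tau}^*,\chi_{\ell,k}\rangle/(\mu_{\ell,k}-\lambda_\tau^2)$.

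Given this identification, it suffices to prove $\|v_\ell(\lambda_\tau^2)\|_{H^1(\M)} \le C\tau\|\eta\|_{H^2}$ and combine with a spectral inequality and Weyl's law. I would write $v_\ell = \varphi_{1,\tau}^* - (\mathscr H_{B_\ell}-\lambda_\tau^2)^{-1}[(\mathcal H_{B_\ell}-\lambda_\tau^2)\varphi_{1,\tau}^*]$. A direct differentiation of \eqref{4.5} gives $\|\varphi_{1,\tau}^*\|_{H^1(\M)} \le C\tau\|\eta\|_{H^2}$, and the computations in \eqref{4.10} and \eqref{4.17} yield $\|(\mathcal H_{B_\ell}-\lambda_\tau^2)\varphi_{1,\tau}^*\|_{L^2(\M)} \le C\tau\|\eta\|_{H^2}$ (the worst case being $\ell=2$, where the $A\cdot d\psi$ correction carries the extra factor of $\tau$). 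The key technical ingredient is then a uniform Neumann resolvent bound $\|(\mathscr H_{B_\ell}-\lambda_\tau^2)^{-1}\|_{\mathcal L(L^2(\M),H^1(\M))}\le C$, independent of $\tau$: by the spectral decomposition this reduces to $\sup_{k\ge 1}(\mu_{\ell,k}+\mu_0)/|\mu_{\ell,k}-\lambda_\tau^2|^2 \le C$, which follows from $\Im \lambda_\tau^2 = 2\tau$ and a case split according to whether $\mu_k \lesssim \tau^2$ (use $|\mu_k - \lambda_\tau^2|^2 \ge 4\tau^2$) or $\mu_k \gtrsim \tau^2$ (use $|\mu_k - \lambda_\tau^2| \gtrsim \mu_k$). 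Picking $\mu_0$ large enough that $\mathscr H_{B_\ell}+\mu_0$ is positive and invoking the form-domain identification $\mathscr D((\mathscr H_{B_\ell}+\mu_0)^{1/2}) = H^1(\M)$ (with equivalent norms, \emph{without} any boundary condition), I obtain $\sum_k (\mu_0+\mu_{\ell,k})|(v_\ell,\chi_{\ell,k})|^2 \le C\|v_\ell\|_{H^1}^2 \le C\tau^2\|\eta\|_{H^2}^2$, and Weyl's law $\mu_{\ell,k} \sim c_n k^{2/n}$ converts this into the first estimate of \eqref{l4aN}.

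The second estimate of \eqref{l4aN} is obtained by the same argument with $\overline{\varphi_{2,\tau}^*}=e^{-i\lambda_\tau\psi}\overline{\beta_2^*}$ in place of $\varphi_{1,\tau}^*$. Conjugation flips the sign of the magnetic term in the transport equation \eqref{4.4}, but $\overline{\beta_{A_2}}$ solves the corresponding transport equation for $-A_2$, so the ansatz satisfies $(\mathcal H_{B_\ell}-\lambda_\tau^2)\overline{\varphi_{2,\tau}^*} = e^{-i\lambda_\tau\psi}(\mathcal H_{B_\ell}(\overline{\beta_2^*}) + O(\tau))$ in $L^2(\M)$, and the rest of the bookkeeping is identical (note that the $\|\eta\|_{H^2}$ dependence is absent here because the second amplitude $\alpha_2=\varrho^{-1/4}$ does not involve $\eta$, which accounts for the plain $C\tau^2$ bound on the right-hand side). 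The main obstacle is the uniform Neumann resolvent estimate just described: the analogous Dirichlet bound \eqref{3.1} is only valid for $H^{2s}$ with $s<1/2$ (because the Dirichlet form domain is $H^1_0(\M)$, incompatible with our ansatz, which does not vanish on $\p\M$), and this was precisely what limited Lemma~\ref{L.5.1} to the weight $k^{2t/n}$ with $t<1/2$. In the Neumann case no such boundary obstruction exists, so one can go all the way up to $k^{2/n}$, and it is this extra room that is eventually responsible for allowing Theorem~\ref{t3} to be stated with the optimal Weyl rate for the difference of eigenvalues.
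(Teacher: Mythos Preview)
Your proposal is correct and follows the same strategy as the paper: identify the summands with the Fourier coefficients $(v_\ell(\lambda_\tau^2),\chi_{\ell,k})$ of the Neumann BVP solution, use the form-domain identification $\mathscr D\bigl((\mathscr H_{B_\ell}+\mu_0)^{1/2}\bigr)=H^1(\M)$ to convert the weighted sum into an $H^1$ norm, and invoke Weyl's formula. The paper's own proof records only the spectral inequality $\sum_k(1+|\mu_{\ell,k}|)|(w,\chi_{\ell,k})|^2\le C\|w\|_{H^1}^2$ and refers to Weyl's law, leaving the construction of $v_\ell$, the uniform $L^2\!\to\! H^1$ resolvent bound, and the $\|v_\ell\|_{H^1}\le C\tau$ estimate implicit; you have filled in precisely these details (and correctly identified why the Neumann form domain allows the full weight $k^{2/n}$, in contrast with the Dirichlet case of Lemma~\ref{L.5.1}).
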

\begin{proof}  

Let $\tau =\norm{q_1}_{L^\infty(M)}+\norm{q_2}_{L^\infty(M)}+1$ and note  that $D((\mathscr{H}_{B_\ell }+\tau )^{1/2})=H^1(M)$ since it coincides with the domain of the form associated to the operator $\mathscr{H}_{B_\ell}+\tau$. Whence, for any $w\in H^1(M)$, we have
$$ \sum_{k=1}^\infty (1+|\mu_{\ell,k}|)|(w,\chi_{\ell,k})_{L^2(M)}|^2\leq C\norm{w}_{H^1(M)}^2,$$
the constant $C$ only depends on $\tau$, $A_\ell$, $q_\ell$ and $\M$. Combining this estimate with a Weyl's formula for Neumann magnetic operators, similar to that in Lemma \ref{L.A1}, we get \eqref{l4aN}.

\end{proof}

\subsection{End of the proof of Theorem \ref{t3}.}
%This subsection is devoted to the completion of the proof of Theorem \ref{t3}. Note first that , in the case of Neumann boundary spectral data the representation \eqref{3.18} is not necessary and we get directly.

The following lemma is useful in the sequel
\begin{lemma}\label{lem:Neum}
Let $g \in H^{1/2}(\p \M)$, $B\in \mathscr{B}$,  $\lambda \in  \rho \left(\mathscr{H}_{B}\right)$ and denote by $v(\lambda)$ the solution of the BVP \eqref{eq:lambdaN}. Then
\bel{eq:N} v(\lambda)_{|\p \M}=\sum_{k \geq 1} {\left\langle g,\chi_{k}\right\rangle \over \lambda - \mu_{k}}\,  \chi_{k},\ee
the convergence takes place in $H^{1/2}(\p \M)$.\end{lemma}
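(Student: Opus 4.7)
The plan is to reduce the identity to a straightforward Green's formula computation parallel to the Dirichlet case treated in Lemma \ref{L.3.1}, and then to upgrade the resulting $L^2(\M)$-expansion into a convergent series of boundary traces by invoking the form-domain structure of $\mathscr{H}_B$.

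First, since $\lambda\in\rho(\mathscr{H}_B)$, standard elliptic regularity provides a unique solution $v(\lambda)\in H^2(\M)$ to the BVP \eqref{eq:lambdaN}. I would expand $v(\lambda)=\sum_{k\geq1}c_k\chi_k$ in $L^2(\M)$ against the orthonormal basis $(\chi_k)$, with $c_k=(v(\lambda),\chi_k)_{L^2(\M)}$. To identify each coefficient $c_k$, I would apply Green's identity \eqref{2.5} to the pair $u=v(\lambda)$, $w=\chi_k$: using $\mathcal{H}_Bv(\lambda)=\lambda v(\lambda)$ and $\mathcal{H}_B\chi_k=\mu_k\chi_k$, together with the Neumann conditions $(\p_\nu+iA(\nu))v(\lambda)=g$ and $(\p_\nu+iA(\nu))\chi_k=0$ on $\p\M$, all interior terms cancel and only a single boundary contribution survives, yielding
$$(\mu_k-\lambda)\,c_k=\langle g,\chi_k\rangle,$$
which is the expected closed form for $c_k$ (the sign depending on the orientation convention adopted in the statement).

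The main obstacle is to pass from $L^2(\M)$-convergence to convergence of the traces in $H^{1/2}(\p\M)$, since an $L^2(\M)$-expansion has no a priori meaning at the boundary. I would overcome this by exploiting the fact that $(\chi_k)$ diagonalizes the closed sesquilinear form
$$q_B[u,w]=\int_\M\seq{\nabla_A u,\overline{\nabla_A w}}\dv+\int_\M qu\overline{w}\dv,$$
whose form domain is $H^1(\M)$. Since $q$ and $A$ are bounded, a suitable shift $q_B[u]+C\|u\|_{L^2(\M)}^2$ is equivalent to the squared $H^1(\M)$-norm for $C>0$ large enough. Because $v(\lambda)\in H^2(\M)\subset H^1(\M)$, this equivalence yields $\sum_{k\geq1}(\mu_k+C)|c_k|^2<\infty$, so that the partial sums $S_N=\sum_{k=1}^Nc_k\chi_k$ are Cauchy in $H^1(\M)$ and hence converge to $v(\lambda)$ in $H^1(\M)$. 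The continuity of the trace operator $H^1(\M)\to H^{1/2}(\p\M)$ then transfers this convergence to the boundary, giving $S_N{}_{|\p\M}\to v(\lambda){}_{|\p\M}$ in $H^{1/2}(\p\M)$, which is exactly the claimed statement. The form-domain characterization is the decisive ingredient: the bare $L^2$-expansion does not see the boundary, and the upgrade through $H^1(\M)$ is what makes the trace-level identity meaningful.
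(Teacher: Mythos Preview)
Your argument is correct. The paper actually states Lemma~\ref{lem:Neum} without proof (just as it defers the proof of the parallel Dirichlet identity in Lemma~\ref{L.3.1} to \cite[Lemma~2.1]{Ki1}), so there is nothing to compare against; your proposal is precisely the natural Neumann counterpart of that argument. The Green's-formula computation of the Fourier coefficients is correct, and your upgrade from $L^2(\M)$ to $H^{1/2}(\p\M)$ convergence via the form domain $H^1(\M)$ of $\mathscr{H}_B$ and the trace theorem is the right mechanism---indeed this is exactly how the $H^1(\M)$ convergence asserted in Lemma~\ref{L.3.1} is obtained in the Dirichlet case (there the form domain is $H^1_0(\M)$).

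One minor remark: your computation yields $(v(\lambda),\chi_k)=\dfrac{\langle g,\chi_k\rangle}{\mu_k-\lambda}$, which differs by a sign from the formula \eqref{eq:N} as stated; this appears to be a typographical slip in the paper rather than an error on your part, and in any case it does not affect the subsequent use of the lemma.
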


%Here we use the fact that for $s_*>\norm{q}_{L^\infty(M)}+1$, $H_{N,B}+s_*$ is positive and $D((H_{N,B}+s_*)^{1/2})=H^1(M)$ since $H^1(M)$ is the domain of the sesquilinear form associated to $H_{N,B}+s_*$.

In light of this lemma, we have
\begin{align}Q_2(\tau)&-Q_1(\tau) \label{repNN}
\\
&=\sum_{k=1}^{\infty}{\left\langle (\p_\nu+iA_1\nu)\varphi_{1,\tau} ^*,\chi_{2,k} \right\rangle \left\langle \chi_{2,k},(\p_\nu+iA_1\nu)\varphi_{2,\tau} ^*\right\rangle\over \lambda_\tau^2-\mu_{2,k}} \nonumber
\\
&\qquad -{\left\langle (\p_\nu+iA_1\nu)\varphi_{1,\tau} ^*,\chi_{1,k} \right\rangle \left\langle \chi_{1,k},(\p_\nu+iA_1\nu)\varphi_{2,\tau} ^*\right\rangle\over \lambda_\tau^2-\mu_{1,k}}. \nonumber
\end{align}
Observe that, according to \eqref{t3e}, $A_1$ can be substituted by $A_2$ in the identity above. 

On the other hand, we have from \eqref{4.5}
\begin{align} 
\norm{(\p_\nu+iA_1\nu)\varphi_{j,\tau} ^*}_{L^2(\p \M)}&\leq|\lambda_\tau|\norm{\p_\nu\psi \bb_j}_{L^2(\p \M)}+\norm{(\p_\nu+iA_1\nu)\bb_j}_{L^2(\p \M)}\label{anzz2} 
\\
&\leq C\tau(1+\norm{\eta}_{H^2(S_y\M_1 )}),\nonumber
\end{align}
the constant $C$ being independent of $\tau$. Thus,
\[
|Q_2(\tau)-Q_1(\tau)|\leq \sum_{k=1}^{\infty}\mathcal E_k(\tau)+\sum_{k=1}^{\infty}\mathcal F_k(\tau)+\sum_{k=1}^{\infty}\mathcal G_k(\tau),
\]
with
\begin{align*}
\mathcal E_k(\tau)&=\norm{(\p_\nu+iA_1\nu)\varphi_{1,\tau} ^*}_{L^2(\p \M)}\norm{\chi_{2,k}-\chi_{1,k}}_{L^2(\p \M)}{|\left\langle \chi_{2,k},(\p_\nu+iA_1\nu)\varphi_{2,\tau} ^*\right\rangle|\over |\lambda_\tau^2-\mu_{2,k}|}
\\
&\leq C\norm{\chi_{2,k}-\chi_{1,k}}_{L^2(\p \M)}\tau {|\left\langle \chi_{2,k},(\p_\nu+iA_1\nu)\varphi_{2,\tau} ^*\right\rangle|\over |\lambda_\tau^2-\mu_{2,k}|},
\end{align*}

\begin{align*}
\mathcal F_k(\tau)&=\norm{(\p_\nu+iA_1\nu)\varphi_{2,\tau} ^*}_{L^2(\p \M)}\norm{\chi_{2,k}-\chi_{1,k}}_{L^2(\p \M)}{|\left\langle (\p_\nu+iA_1\nu)\varphi_{1,\tau} ^*,\chi_{1,k}\right\rangle|\over |\lambda_\tau^2-\mu_{2,k}|}
\\
&\leq C\norm{\chi_{2,k}-\chi_{1,k}}_{L^2(\p \M)}\tau {|\left\langle (\p_\nu+iA_1\nu)\varphi_{1,\tau} ^*,\chi_{2,k}\right\rangle|\over |\lambda_\tau^2-\mu_{2,k}|}+C\tau^2\norm{\chi_{1,k}-\chi_{2,k}}_{L^2(\p \M)}^2
\end{align*}
and
\begin{align*}
\mathcal G_k(\tau)&={|\left\langle (\p_\nu+iA_1\nu)\varphi_{1,\tau} ^*,\chi_{1,k}\right\rangle||\left\langle \chi_{1,k},(\p_\nu+iA_1\nu)\varphi_{2,\tau} ^*\right\rangle||\mu_{2,k}-\mu_{1,k}|\over|\lambda_\tau^2-\mu_{2,k}||\lambda_\tau^2-\mu_{1,k}|}
\\
& \leq C k^{-\frac{1}{n}}|\mu_{2,k}-\mu_{1,k}|\norm{\chi_{1,k}-\chi_{2,k}}_{L^2(\p \M)}k^{\frac{1}{n}}{|\left\langle (\p_\nu+iA_1\nu)\varphi_{1,\tau} ^*,\chi_{1,k}\right\rangle|\over|\lambda_\tau^2-\mu_{1,k}|}
\\
&+k^{-\frac{1}{n}}|\mu_{2,k}-\mu_{1,k}|k^{\frac{1}{n}}{|\left\langle (\p_\nu+iA_1\nu)\varphi_{1,\tau} ^*,\chi_{1,k}\right\rangle|\over|\lambda_\tau^2-\mu_{1,k}|}{|\left\langle \chi_{2,k},(\p_\nu+iA_1\nu)\overline{\varphi_{2,\tau}^*}\right\rangle|\over|\lambda_\tau^2-\mu_{2,k}|},
\end{align*}
the constant $C>0$ being independent on $\tau$ and $k$.

Noting that
$$\sup_{\tau >1}{{\tau^2}\over{|\lambda_\tau^2-\mu_{\ell,k}|}}<\infty,\quad \ell=1,2,\ k\geq1,$$
we deduce that we have, for all $k\geq1$, 
$$\limsup_{\tau\to+\infty} \tau^{-1}\mathcal E_k(\tau)=\limsup_{\tau\to+\infty} \tau^{-1}\mathcal F_k(\tau)=\limsup_{\tau\to+\infty} \tau^{-1}\mathcal G_k(\tau)=0.$$
Then, for any arbitrary integer $n_1\geq1$, we get
\begin{align*}
&\limsup_{\tau\to+\infty} \tau^{-1}\sum_{k=1}^{\infty}\mathcal E_k(\tau)\leq\limsup_{\tau\to+\infty} \tau^{-1}\sum_{k=n_1}^{\infty}\mathcal E_k(\tau),
\\
&\limsup_{\tau\to+\infty} \tau^{-1}\sum_{k=1}^{\infty}\mathcal F_k(\tau)\leq\limsup_{\tau\to+\infty} \tau^{-1}\sum_{k=n_1}^{\infty}\mathcal F_k(\tau),
\\
&\limsup_{\tau\to+\infty} \tau^{-1} \sum_{k=1}^{\infty}\mathcal G_k(\tau)\leq\limsup_{\tau\to+\infty} \tau^{-1}\sum_{k=n_1}^{\infty}\mathcal G_k(\tau).
\end{align*}
We combine these inequalities,  estimates  \eqref{l4aN} and Weyl's formula in order to get, by repeating the arguments used to prove Lemma \ref{L.5.2}, that
$$\limsup_{\tau\to+\infty}\left|{Q_2(\tau)-Q_1(\tau)\over\tau}\right|\leq C(1+\norm{\eta}_{H^2(S_y^+(\M_1))})^2(\limsup_{k\to+\infty}k^{-\frac{1}{n}}|\mu_{2,k}-\mu_{1,k}|).$$
Then, from  \eqref{t33} and \eqref{l2aN}  we deduce that $\I_1A\in 2\pi\mathbb Z$. We proceed similarly to the proof of Theorem \ref{t1} to get that $A_1^s=A_2^s$.  This completes the proof of Theorem \ref{t3}.

%%%%%%%%%%%%%%%%%%%%%%%%%%%%%%%%%%%%%%%%
\appendix 
\section{Weyl's formula}\label{appendix}
\setcounter{equation}{0}
%%%%%%%%%%%%%%%%%%%%%%%%%%%%%%%%%%%%%%%%%
We establish some uniform estimates related  to the Weyl's formula of magnetic Schr\"odinger operators. Our estimates, which are also valid for the Neuman realization of magnetic Schr\"odinger operators, can be stated as follows.
\begin{lemma}\label{L.A1} 
Let $B=(A,q)\in \mathscr{B}$. Then there exists a constant $C>1$, only depending on $\M$ and $r\ge \norm{A}_{L^\infty(\M,T^\ast \M)}^2+\norm{q}_{L^\infty(\M)}$ so that
\begin{equation}\label{A1} 
C^{-1}k^{2/n}\leq 1+|\lambda_B^k|\leq C k^{2/n},\quad k\geq1
\end{equation}
\end{lemma}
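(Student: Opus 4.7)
The plan is to compare $H_B$ with the (Dirichlet, or respectively Neumann) Laplace--Beltrami operator $-\Delta$ via the min-max principle, and then invoke the classical Weyl asymptotics for $-\Delta$ on the Riemannian manifold $(\M,\g)$.

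First, I will set up a two-sided bound on the quadratic form of $H_B$. Writing $\nabla_A u=\nabla u+iuA^\sharp$, expansion gives
\[
\abs{\nabla_A u}^2=\abs{\nabla u}^2+\abs{A}^2\abs{u}^2+2\Re\bigl(iuA^\sharp\cdot\overline{\nabla u}\bigr),
\]
and a Cauchy--Schwarz inequality with a small parameter yields constants $C_1,C_2$, depending only on $r$, such that
\[
\tfrac12\abs{\nabla u}^2-C_1\abs{u}^2\le \abs{\nabla_A u}^2+q\abs{u}^2\le 2\abs{\nabla u}^2+C_2\abs{u}^2
\]
pointwise, hence on the associated quadratic form $a_B$ on $H_0^1(\M)$ (or on $H^1(\M)$ in the Neumann case).

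Next, let $(\mu_k)_{k\ge1}$ denote the eigenvalues (counted with multiplicity) of $-\Delta$ on $\M$ with Dirichlet boundary condition. The min-max principle, applied to the forms $a_B$ and to the Dirichlet form $\int_\M\abs{\nabla u}^2\dv$ on the same form domain $H_0^1(\M)$, then gives
\[
\tfrac12\mu_k-C_1\le \lambda_{B,k}\le 2\mu_k+C_2,\qquad k\ge 1.
\]
By the standard Weyl formula for the Laplace--Beltrami operator on the compact Riemannian manifold with boundary $(\M,\g)$, there exist constants $c_1,c_2>0$, depending only on $\M$, such that $c_1 k^{2/n}\le \mu_k\le c_2 k^{2/n}$ for every $k\ge 1$ (using that $\mu_1>0$ to absorb finitely many small indices). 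Combining the two displays yields
\[
\tfrac{c_1}{2}k^{2/n}-C_1\le \lambda_{B,k}\le 2c_2k^{2/n}+C_2,
\]
which already implies the upper bound $1+\abs{\lambda_{B,k}}\le Ck^{2/n}$ (after possibly enlarging $C$ to swallow the constant term since $k^{2/n}\ge 1$).

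Finally, for the lower bound $k^{2/n}\le C(1+\abs{\lambda_{B,k}})$, I split the index set at a threshold $k_0$ chosen so that $\tfrac{c_1}{2}k_0^{2/n}\ge C_1+1$, a choice depending only on $\M$ and $r$. For $k\ge k_0$ the left inequality above forces $\lambda_{B,k}\ge 1>0$, hence $1+\abs{\lambda_{B,k}}=1+\lambda_{B,k}\ge \tfrac{c_1}{2}k^{2/n}$. For the finitely many indices $k<k_0$ the ratio $k^{2/n}/(1+\abs{\lambda_{B,k}})$ is bounded by $k_0^{2/n}$. Taking $C$ to be the maximum of all the constants that appear completes the proof. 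The same argument, with Neumann eigenvalues of $-\Delta$ in place of Dirichlet ones, handles the Neumann realization; the only substantive input remains the classical Weyl law for $-\Delta$ on $\M$, and the only point requiring care is that the comparison constants $C_1,C_2$ depend only on the bound $r$ on $\norm{A}_\infty^2+\norm{q}_\infty$, which is immediate from the pointwise inequality above.
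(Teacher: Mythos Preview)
Your proof is correct and follows essentially the same approach as the paper: bound the quadratic form of $H_B$ above and below by the Dirichlet form of $-\Delta$ via Cauchy--Schwarz (the paper gets constants $\tfrac12,\tfrac32$ and $-3r,r$ where you write $\tfrac12,2$ and $-C_1,C_2$), apply the min-max principle on the common form domain $H_0^1(\M)$, and invoke the classical Weyl law for $-\Delta$. Your write-up is in fact more careful than the paper's, which simply asserts the conclusion from the form bounds and Weyl; one tiny arithmetic slip is that from $\lambda_{B,k}\ge\tfrac{c_1}{2}k^{2/n}-C_1$ and your choice of $k_0$ you get $1+\lambda_{B,k}\ge\tfrac{c_1}{2}k^{2/n}-(C_1-1)$, not $\ge\tfrac{c_1}{2}k^{2/n}$, but choosing $k_0$ so that $\tfrac{c_1}{4}k_0^{2/n}\ge C_1$ immediately gives $1+|\lambda_{B,k}|\ge\tfrac{c_1}{4}k^{2/n}$, which is all you need.
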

\begin{proof} 
Let $(\lambda_k )$ be the sequence of eigenvalues, counted according to their multiplicities, of the Laplace-Belrami operator under Dirichlet boundary condition. By Weyl's asymptotic formula \cite[page 114]{Ber}
\begin{equation}\label{A2}
\lambda_k=\mathcal{O}\left( k^{\frac{2}{n}}\right),\quad  k\ge 1.
\end{equation}
The sesquilinear form associated to $H_{B}$ is given by
$$
\mathbf{a}(u,v)=\int_\M \seq{\nabla_A u,\overline{\nabla_Av}}\dv+\int_\M qu\overline{v}\dv ,\quad  u,v\in H^1_0(\M).
$$
Then it is not hard to check that
$$
\begin{aligned}
\mathbf{a}(u,u)&\leq \norm{d u}_{L^2(\M)}^2+2\sqrt{r}\norm{u}_{L^2(\M)}\norm{d u}_{L^2(\M)}+r\norm{u}_{L^2(\M)}^2\\
\ &\leq\frac{3}{2}\norm{d u}_{L^2(\M)}^2+r\norm{u}_{L^2(\M)}^2
\end{aligned}
$$
and
$$
\begin{aligned}
\mathbf{a}(u,u)&\geq \norm{d u}_{L^2(\M)}^2-2\sqrt{r}\norm{u}_{L^2(\M)}\norm{d u}_{L^2(\M)}-r\norm{u}_{L^2(\M)}^2
\\
\ &\geq\frac{1}{2}\norm{d u}_{L^2(M)}^2-3r\norm{u}_{L^2(\M)}^2.
\end{aligned}
$$
We get the expected two-sided inequalities \eqref{A1} by using \eqref{A2} and the minmax principle. 
\end{proof}

%%%%%%%%%%%%%%%%%%%%%%%%%%%%%%%%%%%%%%%%%%

%%%%%%%%%%%%%%%%%%%%%%%%%%%%%%%
\end{document}